\pgfplotsset{compat=1.13}
\definecolor{sqsqsq}{rgb}{0.12549019607843137,0.12549019607843137,0.12549019607843137}
\definecolor{aqaqaq}{rgb}{0.6274509803921569,0.6274509803921569,0.6274509803921569}
\definecolor{uuuuuu}{rgb}{0.26666666666666666,0.26666666666666666,0.26666666666666666}
\theoremstyle{plain}
\newtheorem{theorem}{Theorem} 
\newtheorem{lemma}{Lemma}
\newtheorem{corollary}{Corollary}
\newtheorem{proposition}{Proposition}
\theoremstyle{definition}
\newtheorem{definition}{Definition} 
\newtheorem{example}{Example}
\newtheorem{observation}[theorem]{Observation}
\newtheorem{remark}{Remark}
\newtheorem{question}{Question}
\newcommand{\ones}{\mathbb{1}}
\newcommand{\R}{\mathbb{R}}
\newcommand{\torus}[1]{\R^{#1}/\R\ones} 
\newcommand{\TP}[1]{\mathbb{TP}^{#1}} 
\newcommand{\dist}{\operatorname{dist}}
\newcommand{\tmap}{\mathcal C}
\newcommand{\ball}[1]{\mathbb{B}^{#1}}
\newcommand{\balld}{\mathbb{B}^d}
\DeclareMathOperator{\conv}{conv}
\DeclareMathOperator{\relint}{relint}
\DeclareMathOperator{\cone}{cone}
\DeclareMathOperator{\Vor}{Vor}
\DeclareMathOperator{\interior}{int}
\newcommand{\Fan}{\mathcal{F}}
\newcommand{\midv}{\mu}
\newcommand{\ccF}{\mathcal{F}_\text{bis}^d}
\newcommand{\bisector}{\operatorname{bis}}
\newcommand{\bop}{\Fan_\text{bop}}
\newcommand\smallSetOf[2]{\{#1\,|\,#2\}}
\newcommand\SetOf[2]{\left\{#1\,\vphantom{#2}\right|\left.\vphantom{#1}\,#2\right\}}
\newcommand\tropseg[2]{[#1,#2]} 
\newcommand\bisectorpoly{\phi}
\newcommand\lineardist{\lambda}
\DeclareMathOperator\proj{pr}
\newcommand\doi[1]{\href{http://dx.doi.org/#1}{\texttt{doi:#1}}}
\newcommand\polymake{\texttt{polymake}\xspace}
\renewcommand\epsilon{\varepsilon}
\title{Tropical bisectors and Voronoi Diagrams}
\author{Francisco Criado \and Michael Joswig \and Francisco Santos}
\address[Francisco Criado, Michael Joswig]{
  Institut f{\"u}r Mathematik,
  TU Berlin,
  Str.\ des 17. Juni 136, 10623 Berlin, Germany
}
\email{\{criado,joswig\}@math.tu-berlin.de}
\address[Francisco Santos]{
  Departamento de Matem\'{a}ticas, Estad\'{i}stica y Computaci\'{o}n,
  Universidad de Cantabria,
  Av.~de Los Castros 48, 39005 Santander, Spain
  }
\email{francisco.santos@unican.es}
\thanks{%
  F.~Criado has been supported by Berlin Mathematical School and Einstein Foundation Berlin (EVF-2015-230).
  M.~Joswig has been supported by Deutsche Forschungsgemeinschaft (EXC 2046: \enquote{MATH$^+$}, SFB-TRR 109: \enquote{Discretization in Geometry and Dynamics}, SFB-TRR 195: \enquote{Symbolic Tools in Mathematics and their Application}, and GRK 2434: \enquote{Facets of Complexity}).
  F.~Santos has been supported by the Einstein Foundation Berlin (EVF-2015-230) and by Grants
  MTM2017-83750-P/AEI/10.13039/ 501100011033 and PID2019-106188GB-I00/AEI/10.13039/501100011033 of the Spanish State Research Agency}
\date{}
\begin{document}

%


\maketitle


\begin{abstract}
  In this paper we initiate the study of tropical Voronoi diagrams.
  We start out with investigating bisectors of finitely many points with respect to arbitrary polyhedral norms.
  For this more general scenario we show that bisectors of three points are homeomorphic to a non-empty open subset of Euclidean space, provided that certain degenerate cases are excluded.
  Specializing our results to tropical bisectors then yields structural results and algorithms for tropical Voronoi diagrams.
\end{abstract}

\section{Introduction}

One early route to the success of tropical geometry is based on the tropicalization of classical algebraic varieties defined over some valued field.
Key examples include Mikhalkin's correspondence principle, which relates tropical plane curves with classical complex algebraic curves \cite{Mikhalkin:2005}, or the tropical Grassmannians of Speyer and Sturmfels \cite{SpeyerSturmfels04}.
In all of this the focus lies on the combinatorial properties of tropical varieties, which are ordinary polyhedral complexes.

More recently, however, tropical semi-algebraic sets and their intrinsic geometry came into the picture; see\ \cite{Alessandrini:2013}, \cite{JellScheidererYu:1810.05132}.
For instance, their metric properties appear in \cite{ABGJ:2018} as a tool to show that standard versions of the interior point method of linear programming exhibit an exponential complexity in the unit cost model.
The proof of that result is based on translating metric data on a family of tropical linear programs into curvature information about the central paths of their associated ordinary linear programs.
Similarly, tropical analogs of isoperimetric (or isodiametric) inequalities have been studied in \cite{DepersinGaubertJoswig:2017}, where a tropical volume is defined that corresponds to an \enquote{energy gap} in mathematical physics \cite{KosowskyYuille:1994}.
Another example is the statistical analysis of phylogenetic trees by Lin, Monod and Yoshida \cite{LinMonodYoshida:1805.12400}.

We feel that all this calls for a more systematic investigation of metric properties of tropical varieties.
Starting from first principles, this naturally leads to tropical Voronoi diagrams.
The \emph{tropical distance} between two points $a,b\in\R^{d+1}$ is
\begin{equation}\label{eq:dist}
  \dist(a,b) \ = \ \max_{i\in [d+1]} \left(a_i-b_i\right) - \min_{j\in [d+1]} \left(a_j-b_j\right) \ = \ \max_{i,j\in [d+1]}(a_i-b_i-a_j+b_j) \enspace .
\end{equation}
It does not depend on choosing $\min$ or $\max$ as the tropical addition.
The map $\dist:\R^{d+1}\times\R^{d+1}\to\R$ is non-negative, symmetric, and it satisfies the triangle inequality.
Moreover, it is homogeneous, so it induces a norm on the \emph{tropical $d$-torus} $\torus{d+1}\cong\R^d$, where $\ones=(1,\dots,1)$ denotes the all ones vector.
The \emph{tropical Voronoi region} of a site $s\in S$ with respect to a set $S$ comprises those points in $\torus{d+1}$ to which $s$ is the nearest among all sites in $S$, with respect to $\dist$.
The \emph{tropical Voronoi diagram} $\Vor(S)$ is the cell decomposition of $\torus{d+1}$ into Voronoi regions.
Tropical Voronoi diagrams are a special case of Voronoi diagrams for polyhedral norms, a classical topic in convexity and computational geometry; cf.\ \cite[Sect.~7.2]{AurenhammerKleinLee:2013} or \cite[Sect.~4]{MartiniSwanepoel:2004}.

The intersection of two or more Voronoi regions is part of a \emph{bisector}, i.e., the locus of points which are equidistant to a given set.
For instance, in the Euclidean case the bisector of two points is a degenerate quadric which agrees with an affine hyperplane as a set.
In the tropical setting, the bisector of two points can also be described as part of a tropical hypersurface, but this is now of degree $d+1$; see Proposition~\ref{prop:hypersurface}.
Further, in the tropical case two  points may already produce degenerate bisectors (which may contain, e.g., full-dimensional pieces), whereas the first degenerate case in the Euclidean metric arises for three points.
So tropical Voronoi diagrams behave quite differently from Euclidean Voronoi diagrams.

Yet there are also similarities.
A key structural result is that the tropical Voronoi regions are star convex and can be described as unions of finitely many ordinary polyhedra; see Proposition~\ref{prop:polyhedra-bisectors} and Theorem~\ref{thm:star}.
We prove a second main result, Theorem~\ref{thm:3points}, for the more general case of an arbitrary polyhedral norm in~$\R^d$: the bisector of any three points in weak general position is homeomorphic to an open subset of $\R^{d-2}$.
Our proof generalizes the arguments from \cite{IKLM95}, \cite{trisectors}, where a similar result was proved for smooth norms in $d=2,3$.
However, the global topology of tropical bisectors of three or more points can be radically different from the topology of the classical bisectors.
For instance, tropical bisectors are sometimes disconnected and, more strongly, $d+1$ points can have more than one circumcenter.
This may happen even in general position; see Examples~\ref{exm:circumcenters} and~\ref{exm:not_connected}.
We do not know if bisectors may have nontrivial higher Betti numbers, but we suspect they can; see Theorem~\ref{thm:3points_topo}.

Another contribution in our paper is a randomized incremental algorithm for computing the tropical Voronoi diagram of $n$ points in general position in $\torus {d+1}$ with an expected running time of order $O(n^d\log n)$, for fixed dimension~$d$; see Theorem~\ref{thm:tree}.
Euclidean Voronoi diagrams of finite point sets can be explained fully in terms of ordinary convex polyhedra and convex hull algorithms; see \cite{Four+Marks}, \cite{AurenhammerKleinLee:2013}.
We do not know if there is a tropical analog.

Amini and Manjunath \cite{AminiManjunath:2010} study the Voronoi diagram of a lattice with respect to the following asymmetric version of the tropical distance:
\begin{equation*}
\dist(a,b) \ = \ \max_{i\in [d+1]} \left(a_i-b_i\right).
\end{equation*}
As they show in \cite[Lemma 4.7]{AminiManjunath:2010}, this is the polyhedral distance obtained taking as unit ball the standard simplex.
Their motivation comes from work of Baker and Norine \cite{BakerNorine:2007} on a Riemann--Roch theorem for graphs, which implies a Riemann--Roch theorem for tropical curves.

Our paper is organized as follows.
The short Section~\ref{sec:tropical}, in which we verify that the tropical distance is induced by a polyhedral norm and discuss the combinatorics of the tropical unit ball, sets the stage.
In Section~\ref{sec:polyhedral_norms} we collect our general structural results on bisectors and Voronoi diagrams. The results in this section are proved for general polyhedral norms, but all our examples address the tropical case.
A subtle point is the right concept of \enquote{general position}.
In fact, we distinguish between \emph{weak general position} which prevents bisectors to contain full-dimensional parts (see Proposition~\ref{prop:full-dimensional}), and a stronger \emph{general position} which is defined via stability of facets in the bisector under perturbation of the sites.
For instance, the bisector of any number $k$ of points in general position in $\R^d$ is either empty or a polyhedral complex of \emph{pure} dimension $d+1-k$; see Corollary~\ref{coro:pure}.
As a special case, the bisector of $d+1$ points in $\R^d$ in general position is finite.
Section~\ref{sec:2points} returns to the tropical case.
We specialize our results on bisectors in general polyhedral norms, and we show that the combinatorial types of  tropical bisectors of two points  are classified in terms of a certain polyhedral fan related to the tropical unit ball and the braid arrangement; see Theorems~\ref{thm:ccfan} and~\ref{thm:star}.
This is related to work of Develin~\cite{Develin:2005} on the moduli of tropically collinear points.
Finally, in Section~\ref{sec:computing} we discuss algorithms.
This includes a tropical variant of Fortune's beach line algorithm \cite{Fortune:1987} for planar Voronoi diagrams as well as the aforementioned algorithm in arbitrary dimension.

\paragraph*{Acknowledgment.}
We thank G\"unter Rote and three anonymous referees for useful comments on a first version of this paper.
Especially, one referee brought the reference \cite{AminiManjunath:2010} to our attention.

\section{The tropical unit ball}
\label{sec:tropical}

The unit ball with respect to the tropical distance function defined in \eqref{eq:dist} is
\begin{equation}\label{eq:ball}
  \begin{split}
    \ball{d} \, &= \, \SetOf{x\in\torus{d+1}}{\dist(x,0)\leq 1} \, = \, \bigcap_{i\neq j} \SetOf{x\in\torus{d+1}}{x_i-x_j \leq 1} \\
    &= \, \frac{1}{2} \conv\left(\{\pm 1\}^{d+1}\setminus\{\pm\ones\}\right) + \R\ones \enspace .
  \end{split}
\end{equation}
In this way, $\ball{d}$ is a polytope in the tropical torus $\torus{d+1}$.
We also write
$
  \ball{d}(a,r) 
$
for the tropical ball with center $a$ and radius $r$.
All tropical balls result from scaling and translating~$\ball{d}$.
In fact, the tropical norm agrees with polyhedral norm with respect to the tropical unit ball, in the sense of Section~\ref{sec:polyhedral_norms}.
Such distances are called \emph{convex distance functions} in \cite[Sect.~7.2]{AurenhammerKleinLee:2013}; see also \cite{MR3135681,trisectors,IKLM95}.

Both the inequality and the vertex descriptions of $\balld$ in Eq.~\eqref{eq:ball} are non-redundant:
\begin{itemize}
\item $\ball{d}$ has $d(d+1)$ facets.
  Each facet corresponds to a choice of coordinates achieving the maximum and the minimum.
\item $\ball{d}$ has $2^{d+1}-2$ vertices.
  Each vertex corresponds to a (nontrivial) partition of the coordinates into maxima and minima.
  For example, $\ball{2}$ is a hexagon and $\ball{3}$ is a rhombic dodecahedron.
\end{itemize}


The vertex description also shows that  $\ball{d}$ equals the projection of the $(d{+}1)$-dimensional regular cube $[-1,1]^{d+1}$ in $\R^{d+1}$ along  the direction~$\ones$.
That is, $\ball{d}$ is a zonotope with $d+1$ generators in general position, and all its faces are parallelepipeds.
These generators correspond to the $d+1$ coordinate directions in $\torus{d+1}$. 
This suggests a combinatorial way to specify the faces of $\ball{d}$: Each face $F$ can be written as a Minkowski sum
\[
F \ = \ \sum_{i=1}^{d+1} s_i \enspace,
\]
where each $s_i$ is one of $\{-e_i\}$, $[-e_i,e_i]$ or $\{+e_i\}$.
We say that $F$ is of \emph{type} $(F_-,F_*,F_+)$ if
\begin{equation}\label{eq:type}
  \begin{aligned}
    F_-\ =&\ \left\{i \in [d+1]: s_i = \{-e_i\}\right\}\,, \\
    F_*\ =&\ \left\{i \in [d+1]: s_i = [-e_i,e_i]\right\}\,, \\
    F_+\ =&\ \left\{i \in [d+1]: s_i = \{e_i\}\right\} \enspace.
  \end{aligned}
\end{equation}
Conversely, a partition of $[d+1]$ into three parts $F_-$, $F_*$, $F_+$ corresponds to a face of $\ball{d}$ if and only if neither $F_-$ nor $F_+$ is empty.
Moreover, the dimension of $F$ equals the cardinality of $F_*$.
In particular, the vertices of $\ball{d}$ correspond to the $2^{d+1} -2$ ways of partitioning $[d+1]$ into two non-empty subsets.
The facets of $\ball{d}$ correspond to the $d(d+1)$ ways of choosing an ordered pair from $[d+1]$, without repetition.


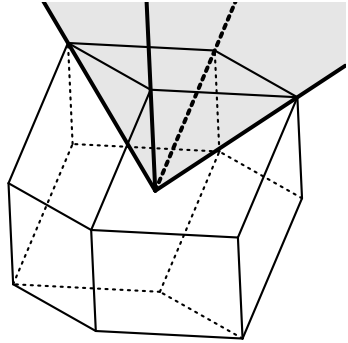
\begin{figure}[th]
  \begin{tikzpicture}[line cap=round,line join=round,>=triangle 45,x=1.0cm,y=1.0cm]
\clip(-2.5,-2.5) rectangle (2.5,2.5);
\fill[line width=0.8pt,dotted,fill=black,fill opacity=0.10000000149011612] (0.,0.) -- (5.407220724754539,3.5870906936134554) -- (-2.6286239350960168,4.487232206390993) -- cycle;
\draw [line width=0.8pt] (1.1480299914398842,-1.9597619433925049)-- (1.930048539262598,-0.09985284614025479);
\draw [line width=0.8pt] (1.1480299914398842,-1.9597619433925049)-- (-0.7820185478227137,-1.85990909725225);
\draw [line width=0.8pt] (-0.8429685379040314,-0.5201168529438285)-- (-0.7820185478227137,-1.85990909725225);
\draw [line width=0.8pt] (-0.7820185478227137,-1.85990909725225)-- (-1.8690985491812802,-1.2399393981681666);
\draw [line width=0.8pt,dotted] (-1.8690985491812802,-1.2399393981681666)-- (-1.0870800013585664,0.6199696990840834);
\draw [line width=0.8pt,dotted] (-1.0870800013585664,0.6199696990840834)-- (-1.1480299914398842,1.9597619433925049);
\draw [line width=0.8pt] (-1.1480299914398842,1.9597619433925049)-- (-0.06094999008131763,1.3397922443084216);
\draw [line width=0.8pt] (-0.06094999008131763,1.3397922443084216)-- (1.8690985491812802,1.2399393981681668);
\draw [line width=0.8pt] (1.930048539262598,-0.09985284614025479)-- (1.8690985491812802,1.2399393981681668);
\draw [line width=0.8pt,dotted] (1.930048539262598,-0.09985284614025479)-- (0.8429685379040315,0.5201168529438286);
\draw [line width=0.8pt] (-1.8690985491812802,-1.2399393981681666)-- (-1.930048539262598,0.09985284614025502);
\draw [line width=0.8pt] (-1.930048539262598,0.09985284614025502)-- (-1.1480299914398842,1.9597619433925049);
\draw [line width=0.8pt] (-1.1480299914398842,1.9597619433925049)-- (0.7820185478227137,1.85990909725225);
\draw [line width=0.8pt] (0.7820185478227137,1.85990909725225)-- (1.8690985491812802,1.2399393981681668);
\draw [line width=0.8pt] (1.8690985491812802,1.2399393981681668)-- (1.0870800013585664,-0.6199696990840833);
\draw [line width=0.8pt] (1.0870800013585664,-0.6199696990840833)-- (1.1480299914398842,-1.9597619433925049);
\draw [line width=0.8pt] (1.0870800013585664,-0.6199696990840833)-- (-0.8429685379040314,-0.5201168529438285);
\draw [line width=0.8pt] (-0.8429685379040314,-0.5201168529438285)-- (-1.930048539262598,0.09985284614025502);
\draw [line width=0.8pt] (-0.06094999008131763,1.3397922443084216)-- (-0.8429685379040314,-0.5201168529438285);
\draw [line width=0.8pt,dotted] (0.8429685379040315,0.5201168529438286)-- (0.7820185478227137,1.85990909725225);
\draw [line width=0.8pt,dotted] (0.8429685379040315,0.5201168529438286)-- (-1.0870800013585664,0.6199696990840834);
\draw [line width=0.8pt,dotted] (0.06094999008131774,-1.3397922443084216)-- (0.8429685379040315,0.5201168529438286);
\draw [line width=0.8pt,dotted] (1.1480299914398842,-1.9597619433925049)-- (0.06094999008131774,-1.3397922443084216);
\draw [line width=0.8pt,dotted] (0.06094999008131774,-1.3397922443084216)-- (-1.8690985491812802,-1.2399393981681666);
\draw [line width=1.5pt,domain=-2.5:0.0] plot(\x,{(-0.--1.9597619433925049*\x)/-1.1480299914398842});
\draw [line width=1.5pt,domain=-2.5:0.0] plot(\x,{(-0.--1.3397922443084216*\x)/-0.06094999008131763});
\draw [line width=1.5pt,domain=0.0:2.5, dotted] plot(\x,{(-0.--1.85990909725225*\x)/0.7820185478227137});
\draw [line width=1.5pt,domain=0.0:2.5] plot(\x,{(-0.--1.2399393981681668*\x)/1.8690985491812802});
\draw [line width=0.8pt,dotted] (0.,0.)-- (5.407220724754539,3.5870906936134554);
\draw [line width=0.8pt,dotted] (5.407220724754539,3.5870906936134554)-- (-2.6286239350960168,4.487232206390993);
\draw [line width=0.8pt,dotted] (-2.6286239350960168,4.487232206390993)-- (0.,0.);
\begin{scriptsize}
\draw [fill=black] (-2.6286239350960168,4.487232206390993) circle (2.5pt);
\draw[color=black] (-1.5041762306351854,2.6866909836855406) node {$H$};
\draw [fill=black] (5.407220724754539,3.5870906936134554) circle (2.5pt);
\draw[color=black] (2.665314585760604,1.3677704193154492) node {$I$};
\draw[color=black] (1.4350311560927989,4.221886156729223) node {$a_2$};
\end{scriptsize}
\end{tikzpicture}
\caption{The tropical 3-ball $\ball{3}$, with the conical hull of a facet highlighted}
\label{fig:facefan}
\end{figure}

\begin{remark}
The zonotope $\ball{d}$ is dual to an arrangement of $d+1$ linear hyperplanes in general position in $\R^d$, oriented so that the intersection of all positive half-spaces is empty. In particular, its face lattice is the same as the lattice of non-zero covectors of the unique totally cyclic oriented matroid of rank $d$ with $d+1$ elements. Covectors of an oriented matroid are usually written as $(V_-,V_0,V_+)$ but in our context we prefer to use $*$ instead of zero meaning that the corresponding coordinate is not fixed.
\end{remark}

\begin{remark}
  Another general description of $\ball{d}$ is that it equals the (ordinary) Voronoi cell of the lattice of type $A_d$ (i.e., the triangular lattice for $d=2$ and the face centered cubic lattice (FCC) for $d=3$).
  Similarly, $\ball{d}$ is the polytope polar to the difference body $T-T$ of a regular $d$-simplex $T$.
  This description shows that $\ball{d}$  is the same as the polytope $U_d$ that appears in Makeev's conjecture.
  See, e.g., \cite[Conjecture 21.3.2]{HDCG:3:topological+methods}.
\end{remark}

\section{Bisectors in polyhedral norms}
\label{sec:polyhedral_norms}

Throughout this section we work in the general framework of Minkowski norms; see \cite[Sect.~7.2]{AurenhammerKleinLee:2013}, \cite{MR3135681}, \cite{MartiniSwanepoel:2004}.
Consider a polytope $K \subset \R^d$ with the origin in its interior.
Let $\dist(a,b)$ be the unique scaling factor $\alpha>0$ such that $b-a \in \partial (\alpha K)$. 
Then, $\dist$ satisfies the triangle inequality, is invariant under translation, and homogeneous under scaling.
If $K=-K$ then $\dist(a,b)= \dist(b,a)$ and $\dist(0,\cdot)$ is a norm in $\R^d$ in the usual sense.
We allow $K\neq -K$, whence $\dist(a,b)\neq \dist(b,a)$ in general, but we still call it a norm. 
Bisectors and Voronoi diagrams for these norms have been studied in computational geometry~\cite[Sect.~7.2]{AurenhammerKleinLee:2013}, \cite[Sect.~4]{MartiniSwanepoel:2004}.

For any finite point set $S$ we define its \emph{bisector}:
\[
  \bisector(S) \ := \ \SetOf{x\in \R^d}{\dist(a,x) = \dist(b,x) \text{ for } a,b\in S} \enspace.
\]
Following the computational geometry tradition we will often call the elements of $S$ the \emph{sites}.
Although some of our results also hold for general convex bodies, for simplicity we assume $K$ to be a polytope.
We denote by $\Fan(K)$ the face fan of~$K$.
The norm $\dist(0,\cdot)$ is linear in each of these cones, so we write
\[
  \bisector_{(F_1, \dots, F_k)}(\{a_1,\dots,a_k\}) \ = \ \bisector(\{a_1,\dots,a_k\}) \cap a_1+F_1 \cap \dots \cap a_k+F_k \enspace,
\]
for the intersection of the bisector with a choice of cones $F_i\in\Fan(K)$.
Each cell of the form $\bisector_{(F_1,\dots,F_k)}(a_1,\dots,a_k)$ is the intersection of the polyhedron $(a_1+F_1) \cap\dots \cap (a_k+F_k)$ with an affine subspace, which implies it is itself a polyhedron. As a consequence:

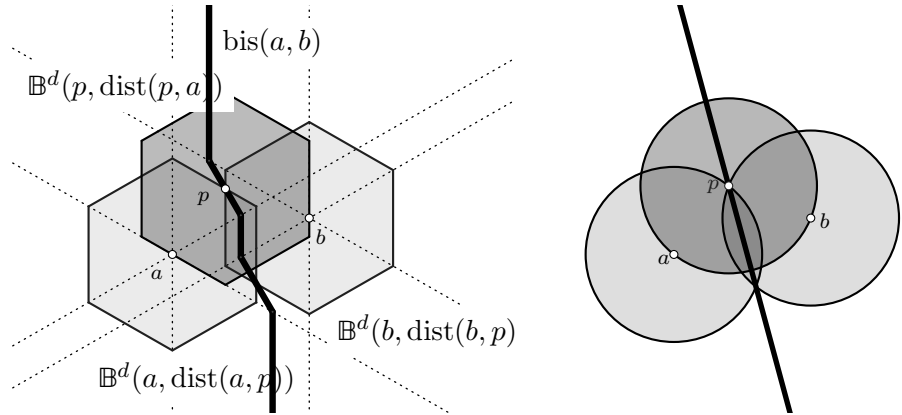
\begin{figure}[th]
  \begin{tikzpicture}[scale=0.6, line cap=round,line join=round,>=triangle 45,x=1.0cm,y=1.0cm]
\clip(-3.5,-3.5) rectangle (7.5,5.5);
\fill[line width=2.pt,color=sqsqsq,fill=sqsqsq,fill opacity=0.09000000357627869] (1.8374788999639984,1.060868937524472) -- (0.,2.1217378750489435) -- (-1.8374788999639986,1.0608689375244715) -- (-1.8374788999639984,-1.0608689375244726) -- (0.,-2.121737875048945) -- (1.837478899963998,-1.0608689375244738) -- cycle;
\fill[line width=2.pt,color=sqsqsq,fill=sqsqsq,fill opacity=0.09000000357627869] (1.1625211000360016,1.8647429436676326) -- (1.1625211000360016,-0.25699493138131113) -- (3.,-1.3178638689057833) -- (4.837478899963999,-0.2569949313813113) -- (4.837478899964,1.864742943667632) -- (3.,2.9256118811921046) -- cycle;
\fill[line width=2.pt,fill=black,fill opacity=0.27000001072883606] (3.,0.3896870674800665) -- (3.,2.5114249425290103) -- (1.1625211000360012,3.5722938800534823) -- (-0.674957799927998,2.5114249425290103) -- (-0.6749577999279983,0.38968706748006665) -- (1.1625211000359998,-0.6711818700444061) -- cycle;
\draw [line width=0.5pt,dotted] (3.,-4.) -- (3.,6.);
\draw [line width=0.5pt,dotted,domain=-3.5:7.5] plot(\x,{(-0.--1.5*\x)/-2.598076211353316});
\draw [line width=0.5pt,dotted] (0.,-4.) -- (0.,6.);
\draw [line width=0.5pt,dotted,domain=-3.5:7.5] plot(\x,{(-2.4114740677141646--1.5*\x)/2.598076211353316});
\draw [line width=0.5pt,dotted,domain=-3.5:7.5] plot(\x,{(-6.588525932285835--1.5*\x)/-2.598076211353316});
\draw [line width=0.5pt,dotted,domain=-3.5:7.5] plot(\x,{(-0.-0.5*\x)/-0.8660254037844385});
\draw [line width=0.8pt,color=sqsqsq] (1.8374788999639984,1.060868937524472)-- (0.,2.1217378750489435);
\draw [line width=0.8pt,color=sqsqsq] (0.,2.1217378750489435)-- (-1.8374788999639986,1.0608689375244715);
\draw [line width=0.8pt,color=sqsqsq] (-1.8374788999639986,1.0608689375244715)-- (-1.8374788999639984,-1.0608689375244726);
\draw [line width=0.8pt,color=sqsqsq] (-1.8374788999639984,-1.0608689375244726)-- (0.,-2.121737875048945);
\draw [line width=0.8pt,color=sqsqsq] (0.,-2.121737875048945)-- (1.837478899963998,-1.0608689375244738);
\draw [line width=0.8pt,color=sqsqsq] (1.837478899963998,-1.0608689375244738)-- (1.8374788999639984,1.060868937524472);
\draw [line width=0.8pt,color=sqsqsq] (1.1625211000360016,1.8647429436676326)-- (1.1625211000360016,-0.25699493138131113);
\draw [line width=0.8pt,color=sqsqsq] (1.1625211000360016,-0.25699493138131113)-- (3.,-1.3178638689057833);
\draw [line width=0.8pt,color=sqsqsq] (3.,-1.3178638689057833)-- (4.837478899963999,-0.2569949313813113);
\draw [line width=0.8pt,color=sqsqsq] (4.837478899963999,-0.2569949313813113)-- (4.837478899964,1.864742943667632);
\draw [line width=0.8pt,color=sqsqsq] (4.837478899964,1.864742943667632)-- (3.,2.9256118811921046);
\draw [line width=0.8pt,color=sqsqsq] (3.,2.9256118811921046)-- (1.1625211000360016,1.8647429436676326);
\draw [line width=0.8pt] (3.,0.3896870674800665)-- (3.,2.5114249425290103);
\draw [line width=0.8pt] (3.,2.5114249425290103)-- (1.1625211000360012,3.5722938800534823);
\draw [line width=0.8pt] (1.1625211000360012,3.5722938800534823)-- (-0.674957799927998,2.5114249425290103);
\draw [line width=0.8pt] (-0.674957799927998,2.5114249425290103)-- (-0.6749577999279983,0.38968706748006665);
\draw [line width=0.8pt] (-0.6749577999279983,0.38968706748006665)-- (1.1625211000359998,-0.6711818700444061);
\draw [line width=0.8pt] (1.1625211000359998,-0.6711818700444061)-- (3.,0.3896870674800665);
\draw (0.8761648351067844,5.227635760021221) node[anchor=north west,fill=white] {$\bisector(a,b)$};
\draw (-1.8550187330721633,-2.15) node[anchor=north west,fill=white] {$\ball{d}(a,\dist(a,p))$};
\draw (3.3866467008066254,-1.1) node[anchor=north west,fill=white] {$\ball{d}(b,\dist(b,p))$};
\draw (-3.4,4.3) node[anchor=north west,fill=white] {$\ball{d}(p,\dist(p,a))$};
\draw [line width=2.4pt] (1.5,-0.06215139764127786)-- (1.5,0.8660254037844386);
\draw [line width=2.4pt] (1.5,0.8660254037844386)-- (0.8038246892380552,2.0718364129991795);
\draw [line width=2.4pt] (1.5,-0.06215139764127786)-- (2.1961753107619453,-1.2679624068560191);
\draw [line width=2.4pt] (0.8038246892380552,2.0718364129991795) -- (0.8038246892380552,6.);
\draw [line width=2.4pt] (2.1961753107619453,-1.2679624068560191) -- (2.1961753107619453,-4.);
\begin{scriptsize}
\draw [fill=white,draw=black] (0.,0.) circle (2.5pt);
\draw[color=black] (-0.33769452852830345,-0.4416392224108269) node {$a$};
\draw [fill=white,draw=black] (3.,0.8038740061431607) circle (2.5pt);
\draw[color=black] (3.2487081367571835,0.3859921618858225) node {$b$};
\draw [fill=uuuuuu] (1.5,0.8660254037844386) circle (1.0pt);
\draw [fill=uuuuuu] (1.5,-0.06215139764127786) circle (1.0pt);
\draw [fill=uuuuuu] (0.8038246892380552,2.0718364129991795) circle (1.0pt);
\draw [fill=uuuuuu] (2.1961753107619453,-1.2679624068560191) circle (1.0pt);
\draw [fill=white,draw=black] (1.1625211000360016,1.4505560050045383) circle (2.5pt);
\draw[color=black] (0.683050845437566,1.2412112589923603) node {$p$};
\end{scriptsize}
\end{tikzpicture}
\qquad
\begin{tikzpicture}[scale=0.6, line cap=round,line join=round,>=triangle 45,x=1.0cm,y=1.0cm]
\clip(-2,-3.5) rectangle (5,5.5);
\draw [line width=0.8pt,fill=black,fill opacity=0.12999999523162842] (3.,0.8038740061431607) circle (1.9344809080638885cm);
\draw [line width=0.8pt,fill=black,fill opacity=0.12999999523162842] (0.,0.) circle (1.9344809080638883cm);
\draw [line width=0.8pt,fill=black,fill opacity=0.2800000011920929] (1.2014317669821444,1.5161721844664768) circle (1.9344809080638885cm);
\draw (-3,4.461877167674134) node[anchor=north west] {}; 
\draw (-2.7295748965261732,-1.8148880008685546) node[anchor=north west] {}; 
\draw (3.650369973417217,-0.8346808375618883) node[anchor=north west] {}; 
\draw (0.8989112693985043,4.5306636352746015) node[anchor=north west] {}; 
\draw [line width=2.pt,domain=-3.5:7.5] plot(\x,{(-4.8231067088763275--3.*\x)/-0.8038740061431607});
\begin{scriptsize}
\draw [fill=white,draw=black] (0.,0.) circle (2.5pt);
\draw[color=black] (-0.21886882910909777,-0.10382461930691764) node {$a$};
\draw [fill=white,draw=black] (3.,0.8038740061431607) circle (2.5pt);
\draw[color=black] (3.306437635414878,0.7388096087988131) node {$b$};
\draw [fill=white,draw=sqsqsq] (1.2014317669821444,1.5161721844664768) circle (2.5pt);
\draw[color=sqsqsq] (0.8645180355982703,1.4954607524039591) node {$p$};
\end{scriptsize}
\end{tikzpicture}

\caption{Left: A point, $p$, in the tropical bisector of $a$ and $b$. Right: The analogous picture, in classical geometry.}
\label{tikz:bisector+tropical+classical}
\end{figure}

\begin{proposition} \label{prop:polyhedra-bisectors}
  Let $K$ be a polytope with the origin in its interior, and let $\dist$ be the corresponding Minkowski norm.
  Let $S=\{a_1,\dots,a_k\}\subset\R^d$ be a finite point set.
  Then the bisector $\bisector(\{a_1,\dots,a_k\})$ is a polyhedral complex whose cells are the polyhedra
  \[
    \bisector_{(F_1,\dots,F_k)}(a_1,\dots,a_k)
  \]
for all choices of   $F_1,\dots,F_k\in \Fan(K)$.
\end{proposition}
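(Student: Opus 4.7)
The plan is to exploit the fact that, for each site $a_i$, the function $x\mapsto\dist(a_i,x)$ is piecewise linear with respect to the translated face fan $a_i+\Fan(K)$. Once this is in place, the bisector condition becomes a system of linear equations on each polyhedral piece, and the cells will glue together into a complex.

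First, I would verify the piecewise linearity of the norm: on every cone of $\Fan(K)$, the function $\dist(0,\cdot)$ agrees with a linear functional. For any facet $G$ of $K$, there is a unique linear functional $\lineardist_G$ with $\lineardist_G\equiv 1$ on the affine hull of $G$; since $0\in\interior K$, every point of $\cone(G)$ has the form $\alpha y$ with $y\in G$ and $\alpha\geq 0$, and the definition of $\dist$ gives $\dist(0,\alpha y)=\alpha=\lineardist_G(\alpha y)$. Lower-dimensional cones of $\Fan(K)$ are faces of such maximal cones, so the same functional restricts to them. Translating, the restriction of $x\mapsto\dist(a_i,x)$ to $a_i+F_i$ is an affine functional $\lineardist_{i,F_i}$.

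From this I would deduce that each candidate cell is a polyhedron: for any tuple $(F_1,\dots,F_k)$ of cones in $\Fan(K)$,
\[
\bisector_{(F_1,\dots,F_k)}(a_1,\dots,a_k) \ = \ \bigcap_{i=1}^k (a_i+F_i) \ \cap\ \bigcap_{i=2}^k \SetOf{x\in\R^d}{\lineardist_{1,F_1}(x)=\lineardist_{i,F_i}(x)}
\]
is the intersection of a polyhedron with an affine subspace. To verify the polyhedral complex axioms, I would check three properties. Coverage: for $x\in\bisector(a_1,\dots,a_k)$, the translated fan $a_i+\Fan(K)$ covers $\R^d$, so one can pick the minimal $F_i\in\Fan(K)$ with $x-a_i\in F_i$, placing $x$ in $\bisector_{(F_1,\dots,F_k)}$. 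Intersection: since $\Fan(K)$ is closed under intersection of cones, a direct set-theoretic computation gives
\[
\bisector_{(F_1,\dots,F_k)}\cap \bisector_{(F_1',\dots,F_k')} \ = \ \bisector_{(F_1\cap F_1',\dots,F_k\cap F_k')}.
\]
Face closure: any face of the ambient polyhedron $\bigcap_i(a_i+F_i)$ has the form $\bigcap_i(a_i+G_i)$ with $G_i$ a face of $F_i$ in $\Fan(K)$, and restricting the bisector equations gives back $\bisector_{(G_1,\dots,G_k)}$.

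The main obstacle I anticipate is verifying that the intersection of two cells is actually a face of each, rather than merely a subset. This relies on the continuity compatibility implicit in the first step: on the shared translate $a_i+(F_i\cap F_i')$, the affine pieces $\lineardist_{i,F_i}$ and $\lineardist_{i,F_i'}$ must both coincide with the continuous function $\dist(a_i,\cdot)$, so the linear equations defining the two neighbouring cells restrict to the same equations on their common boundary, which is itself a face of each ambient polyhedron.
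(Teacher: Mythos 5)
Your proof is correct and follows essentially the same route as the paper's: identify the cells $\bisector_{(F_1,\dots,F_k)}$ as polyhedra via piecewise linearity of the norm, use the intersection formula $\bisector_{(F_1,\dots,F_k)}\cap\bisector_{(F_1',\dots,F_k')}=\bisector_{(F_1\cap F_1',\dots,F_k\cap F_k')}$, and show coverage by placing each bisector point in the cone containing its displacement from each site. The paper's proof is terser, essentially leaving to the reader the verification you correctly flag as the subtle point -- that the intersection of two cells is a \emph{face} of each, not just a subset -- and your observation that this follows because $\bigcap_i (a_i + (F_i\cap F_i'))$ is itself a face of $\bigcap_i(a_i+F_i)$ (being obtained by tightening some of its defining inequalities), together with the fact that the two affine restrictions of $\dist(a_i,\cdot)$ agree on the common face, is exactly the right way to fill this in.
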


\begin{proof}
The family of polyhedra
\[
  \bisector_{(F_1,\dots,F_k)}(a_1,\dots,a_k) \,, \text{ with } F_1,\dots,F_k\in \Fan(K) \,,
\]
forms a polyhedral complex since
\[
  \bisector_{(F_1,\dots,F_k)}(S) \cap \bisector_{(F'_1,\dots,F'_k)}(S) \ = \ \bisector_{(F_1\cap F'_1,\dots,F_k\cap F'_k)}(S) \enspace.
\]
That polyhedral complex covers the entire bisector since for each point $p\in\bisector(S)$ and for each $i$, the point $a_i$ must lie in some face $F_i$ of $p-\dist(a_i,p)K$.
\end{proof}

Our primary example is the case where $K=\balld$ is the tropical ball.
In Figure \ref{tikz:bisector+tropical+classical} the point $p$, which is generic within the bisector of $a$ and $b$, lies in the facet $\bisector_{(-*+),(+-*)}(a,b)$.
For the purpose of drawing pictures, notice that any three vectors $v_1,v_2,v_3\in\R^2$ with $v_1+v_2+v_3=0$ define a map from $\torus{3}$ to $\R^2$ via $e_i\mapsto v_i$.
While $v_1=(1,0)$, $v_2=(0,1)$, $v_3=(-1,-1)$ is a common base for diagrams in tropical geometry, for our pictures we settle for the more symmetric \emph{isometric view} where our base is:
\[
  v_1=\left(-\sin\frac{2\pi}{3},\,\cos\frac{2\pi}{3}\right)\,,\ v_2=\left(\sin\frac{2\pi}{3},\,\cos\frac{2\pi}{3}\right)\,,\ v_3=\bigl(0,1\bigr) \enspace.
\]

\begin{remark}
  \label{rem:emptiness_is_open}
  If $\bisector(a_1,\dots,a_k)=\emptyset$ then $\bisector(a'_1,\dots,a'_k)=\emptyset$ for every choice of $a'_1,\dots, a'_k$ sufficiently close to $a_1,\dots, a_k$, by continuity of $\dist(\cdot,\cdot)$.
\end{remark}


\subsection{Weak general position and general position}
\label{sec:gen_pos}

\begin{definition}[General position]
A finite point set $S \subset \R^{d}$ is in \emph{general position} with respect to $K$, if for every subset $\{a_1,\dots,a_k\} \subset S$ there are neighborhoods $U_i$ of each $a_i$ such that for every choice of $\{a'_1,\dots,a'_k\}$ with $a'_i \in U_i$ and for every choice of maximal cones $F_1,\dots,F_k\in \Fan(K)$ we have
\[
\bisector_{(F_1,\dots,F_k)}(a_1,\dots,a_k) = \emptyset
\ \iff \
\bisector_{(F_1,\dots,F_k)}(a'_1,\dots,a'_k) = \emptyset \enspace.
\]

Moreover, the set $S$ is in \emph{weak general position} if no pair of points $a,b\in S$ lies in a hyperplane parallel to a facet of~$K$.
\end{definition}

\begin{remark}
As the name suggests, \enquote{weak general position} is implied by \enquote{general position} (see Corollary~\ref{cor:general_position}).
A yet  stronger notion would arise requiring stability not only of facets but also of lower-dimensional cells in bisectors; that is, allowing lower-dimensional cones from $\Fan(K)$ for the $F_i$ in the definition of general position.
But this intermediate notion of \enquote{general position} is the most appropriate for our purposes and the algorithms in Section \ref{sec:computing}.
As a first indication, Theorem~\ref{thm:genpos} provides a local characterization.
\end{remark}

By Proposition~\ref{prop:polyhedra-bisectors} bisectors are polyhedral complexes, and thus each cell has a dimension.

\begin{proposition}
\label{prop:full-dimensional}
Two points $a,b$ are in weak general position if and only if $\bisector(a,b)$ does not contain full-dimensional cells.
\end{proposition}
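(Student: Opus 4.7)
The plan is to apply Proposition~\ref{prop:polyhedra-bisectors} to decompose $\bisector(a,b)$ into cells $\bisector_{(F_1,F_2)}(a,b)$ indexed by pairs of cones $F_1, F_2 \in \Fan(K)$ and then examine dimensions cell by cell.

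The key observation is that on each maximal cone $F \in \Fan(K)$---that is, the cone over a facet of $K$ lying on a supporting hyperplane $\{y : \langle n_F, y\rangle = 1\}$---the function $x \mapsto \dist(0,x)$ coincides with the linear form $\langle n_F, x\rangle$. Hence on a polyhedron $(a+F_1)\cap(b+F_2)$ with both $F_i$ maximal, the bisector condition $\dist(a,x) = \dist(b,x)$ unfolds into the single linear equation
\[
\langle n_{F_1} - n_{F_2},\, x\rangle \ = \ \langle n_{F_1}, a\rangle - \langle n_{F_2}, b\rangle.
\]
A cell $\bisector_{(F_1,F_2)}(a,b)$ can have dimension $d$ only when both $F_i$ are maximal cones, so I restrict to this case. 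If $F_1 \neq F_2$ then $n_{F_1} \neq n_{F_2}$ (distinct facets of $K$ have distinct outward normals), so the equation above cuts out a proper hyperplane in the polyhedron, and the cell has dimension at most $d-1$. A full-dimensional cell can therefore only arise when $F_1 = F_2 =: F$, and in that case the equation collapses to the $x$-independent condition $\langle n_F, a - b\rangle = 0$.

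Both directions of the equivalence now follow. If $a,b$ are in weak general position, then $\langle n_F, a - b\rangle \neq 0$ for every facet $F$ of $K$, so by the analysis above no cell is full-dimensional. Conversely, if $\langle n_F, a - b\rangle = 0$ for some facet $F$, then the entire polyhedron $(a+F)\cap(b+F)$ lies inside $\bisector(a,b)$, and it suffices to verify that this intersection has nonempty interior. This is a short cone calculation: for any $v \in \interior(F)$ and $t$ large enough, both $tv$ and $(b-a) + tv$ lie in $\interior(F)$, so $b + tv$ sits in the interior of both $a+F$ and $b+F$. This final existence check is the only place any real work is needed and is the main---though modest---obstacle; everything else is bookkeeping around the linearity of $\dist$ on maximal cones of $\Fan(K)$.
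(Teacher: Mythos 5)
Your proof is correct and follows essentially the same route as the paper's: decompose via Proposition~\ref{prop:polyhedra-bisectors}, observe that a full-dimensional cell forces both cones to be maximal and equal (you make the "same gradient" step of the paper explicit by writing out the linear equation $\langle n_{F_1}-n_{F_2},x\rangle=\langle n_{F_1},a\rangle-\langle n_{F_2},b\rangle$), and then show the remaining condition $\langle n_F,a-b\rangle=0$ is precisely failure of weak general position. The only genuine addition over the paper's terse proof is your explicit cone argument verifying that $(a+F)\cap(b+F)$ is indeed full-dimensional, a point the paper asserts without justification; that check is correct.
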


\begin{proof}
Since $\bisector_{(F,F')}(a,b) \subset (a+F) \cap (b+F')$, for it to be $d$-dimensional we need $F$ and $F'$ to be cones of facets. We also need $F=F'$, so that $\dist(a,\cdot)$ and $\dist(b,\cdot)$ have the same gradient on $(a+F) \cap (b+F)$, and we need $b-a$ to be parallel to the facet, so that $\dist(a,\cdot) = \dist(b,\cdot)$  on $(a+F) \cap (b+F)$. 

Conversely, if $b-a$ is parallel to a
facet of $K$ with cone $F$ then
\[
\bisector_{(F,F)}(a,b) \ = \  (a+F) \cap (b+F) \enspace,
\]
and this is $d$-dimensional.
\end{proof}

\begin{corollary}\label{cor:general_position}
General position implies weak general position.
\end{corollary}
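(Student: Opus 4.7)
The plan is to argue by contrapositive, using the characterization of weak general position from Proposition~\ref{prop:full-dimensional} together with an explicit perturbation that destroys a full-dimensional bisector cell.

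Assume that $S$ is not in weak general position, so there exist $a,b\in S$ with $b-a$ parallel to some facet $G$ of $K$. Let $F\in\Fan(K)$ be the maximal cone over $G$, and let $\nabla\in\R^d$ be the gradient of the linear function $\dist(a,\cdot)$ on $a+F$ (equivalently, of $\dist(b,\cdot)$ on $b+F$, since both are translates of the same cone with the same linear extension). The first step is to record the key identity
\[
\dist(a,x)-\dist(b,x) \ = \ \nabla\cdot(b-a) \qquad \text{for all } x\in(a+F)\cap(b+F),
\]
which follows because $\dist(a,x)=\nabla\cdot(x-a)$ on $a+F$ and $\dist(b,x)=\nabla\cdot(x-b)$ on $b+F$. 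Since $b-a$ is parallel to the facet $G$ while $\nabla$ is an outer normal to $G$, this right-hand side vanishes, so $\bisector_{(F,F)}(a,b)=(a+F)\cap(b+F)$. Because $F$ is a full-dimensional cone and a cone satisfies $F+F=F$, the intersection $(a+F)\cap(b+F)$ is non-empty (in fact full-dimensional).

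The second step is to produce an arbitrarily small perturbation $b'$ of $b$ for which the same cell becomes empty. The set of directions $v\in\R^d$ parallel to $G$ is a hyperplane, so in every neighborhood of $b$ we can choose $b'$ with $\nabla\cdot(b'-a)\neq 0$. Applying the identity above to the pair $(a,b')$ in place of $(a,b)$ shows that on $(a+F)\cap(b'+F)$ we have $\dist(a,x)-\dist(b',x)=\nabla\cdot(b'-a)\neq 0$, so $\bisector_{(F,F)}(a,b')=\emptyset$. Taking $a'=a$ and perturbing only $b$, we have exhibited $F_1=F_2=F$ and an arbitrarily close perturbation $(a',b')$ with $\bisector_{(F_1,F_2)}(a,b)\neq\emptyset$ but $\bisector_{(F_1,F_2)}(a',b')=\emptyset$, contradicting general position of $\{a,b\}\subset S$.

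There is no real obstacle here beyond bookkeeping: the main thing to verify is that $F$ being a full-dimensional cone forces $(a+F)\cap(b+F)$ to be non-empty (so the original cell is genuinely full-dimensional, not spuriously empty), and that the perturbation direction can be chosen outside the hyperplane parallel to $G$, both of which are immediate.
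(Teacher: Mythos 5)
Your argument is correct and is essentially the paper's own proof: exhibit the full-dimensional cell $\bisector_{(F,F)}(a,b)=(a+F)\cap(b+F)$ and perturb $b$ transversally to the facet hyperplane to make that cell empty. One small cleanup: the non-emptiness of $(a+F)\cap(b+F)$ does not really follow from the identity $F+F=F$ (which holds for any convex cone, including degenerate ones); the correct reason is that full-dimensionality of $F$ gives $F-F=\R^d$, so one can write $b-a=f_1-f_2$ with $f_1,f_2\in F$ and then $a+f_1=b+f_2$ lies in the intersection.
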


\begin{proof}
  Suppose $a-b$ is parallel to a facet of~$K$, so that $\bisector_{(F,F)}(a,b)$ is full-dimensional, where $F$ is the cone of that facet.
  Taking $b'$ close to $b$ but away from the hyperplane parallel to the facet makes $\bisector_{(F,F)}(a,b')$ empty.
  Now the claim follows from Proposition~\ref{prop:full-dimensional}.
\end{proof}

\begin{theorem}
\label{thm:genpos}
Let $S= \{a_1,\dots,a_k\} \subseteq\R^d$ and for each $a_i$ choose a maximal cone $F_k\in \Fan(K)$. Let $Q:=(a_1+F_1) \cap \dots \cap (a_{k}+F_{k})$, let $\lineardist_{F_i}(x)$ be the linear function that restricts to $\dist(0,x)$ on $F_i$, and let $H$ be the affine subspace defined by $\lineardist_{F_1}(x-a_1)=\dots=\lineardist_{F_k}(x-a_k)$.

Then, the following conditions are equivalent:
\begin{enumerate}
\item\label{it:1}
  There are neighborhoods $U_i$ of each $a_i$ such that for any choice of $a'_i \in U_i$ the polyhedron $\bisector_{(F_1,\dots,F_k)}(a'_1,\dots,a'_k)$ is not empty.
\item\label{it:2}
\begin{enumerate}
\item $Q$ is full-dimensional and $H$ intersects its interior; and
\item the $k-1$ functions $\lineardist_{F_i}-\lineardist_{F_1}$ for $i=2,\dots,k$ are linearly independent.
\end{enumerate}
\end{enumerate}
\end{theorem}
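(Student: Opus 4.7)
The plan is to identify the bisector cell as $B = Q \cap H$ and to characterize when non-emptiness of $Q' \cap H'$ is stable under perturbations $a_i \mapsto a'_i$. Under such a perturbation $Q'$ varies continuously (each defining halfspace translates with the corresponding $a'_i$), and $H'$ is a parallel translate of $H$ whose offset depends linearly on the $a'_i$. A preliminary observation is that condition (b) captures precisely when $H'$ is forced to be a non-empty affine subspace of codimension $k-1$ for every nearby $a'_i$: if the $k-1$ forms $\lineardist_{F_i}-\lineardist_{F_1}$ are linearly independent, the defining linear system is solvable for every right-hand side; conversely, a nontrivial dependence $\sum_{i=2}^{k}c_i(\lineardist_{F_i}-\lineardist_{F_1})=0$ forces the consistency condition $\sum_{i=2}^{k}c_i(\lineardist_{F_1}(a'_1)-\lineardist_{F_i}(a'_i))=0$, a nontrivial linear constraint on the sites that arbitrarily small generic perturbations violate, producing $H'=\emptyset$ and hence $B'=\emptyset$. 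Moreover, under (b) the map from perturbations of the sites to translations of $H$ in its normal space is surjective.

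For $(2)\Rightarrow(1)$, condition (a) provides a witness $p \in H \cap \interior(Q)$. Since the defining inequalities of $Q$ are strict at $p$, a full-dimensional neighborhood of $p$ remains inside $\interior(Q')$ for all sufficiently small perturbations. By (b), $H'$ is a close parallel translate of $H$ and meets this neighborhood, giving $B' \neq \emptyset$.

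For $(1)\Rightarrow(2)$ I argue the contrapositive. Failure of (b) was handled above, so assume (b) holds and (a) fails. If $\dim Q < d$, then by Farkas's lemma there is a nontrivial nonnegative combination $\sum_{i,j}\lambda_{i,j}\phi_{i,j}$ of the facet normals $\phi_{i,j}$ of the translated cones $a_i + F_i$ that vanishes as a linear form, with matching right-hand side $\sum_{i,j}\lambda_{i,j}\phi_{i,j}(a_i)=0$. Since each $F_i$ is full-dimensional and pointed, the internal sums $\psi_i := \sum_j\lambda_{i,j}\phi_{i,j}$ cannot all vanish; picking $i_\ast$ with $\psi_{i_\ast} \neq 0$ and perturbing $a_{i_\ast}$ in a direction $v$ with $\psi_{i_\ast}(v) < 0$ drives the right-hand side sum negative and forces $Q' = \emptyset$. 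Otherwise $Q$ is full-dimensional but $H \cap \interior(Q) = \emptyset$; the separation theorem gives a linear form $\phi$, constant on $H$ with value $c$, such that $Q \subseteq \{\phi \leq c\}$. Using the surjectivity noted above one perturbs the sites so that $H$ translates by a small amount $\epsilon > 0$ in the direction $+\phi^\ast$, giving $H' \subseteq \{\phi = c + \epsilon\}$, while choosing the perturbation so as not to loosen the supporting face $\{\phi = c\} \cap Q$ of $Q$ — for instance, leaving unperturbed those sites whose cones contribute facets to that supporting face, which still leaves enough freedom by the surjectivity of the translation map. This keeps $Q'$ on the infeasible side of $H'$, so $H' \cap Q' = \emptyset$.

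The main obstacle is the coordination in this final subcase: the translation of $H$ and the shift of the $\phi$-upper-bound of $Q$ are both linear functionals of the perturbation vector, and one must arrange that the former strictly dominates the latter. In the generic situation where the supporting hyperplane $\{\phi = c\}$ is exposed by a single facet coming from one site $a_{i_0}$, the decoupling is immediate (freeze $a_{i_0}$, use the remaining $k-1$ sites). In the degenerate situation where $\phi$ only supports $Q$ at a lower-dimensional face whose supporting facets span multiple sites, a more delicate argument is required, but the linear-algebraic surjectivity supplied by (b) together with the pointedness of each $F_i$ guarantees that these two functionals are independent, so a suitable small perturbation always exists.
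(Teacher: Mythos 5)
Most of your argument tracks the paper's: the $(2)\Rightarrow(1)$ direction is the same continuity argument; your treatment of a failing condition (b) via the forced consistency constraint on the right-hand sides, broken by perturbing any $a_i$ with $c_i\neq 0$, is the same idea; and your Farkas treatment of a non-full-dimensional $Q$ is a correct (if heavier) alternative to the paper's simpler observation that such a $Q$ must lie in the boundary of some $a_i + F_i$, so that pushing $a_i$ into $\interior(a_i + F_i)$ already makes $Q'$ empty.

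The gap is exactly the one you flag yourself: the subcase where $Q$ is full-dimensional but $H$ misses $\interior(Q)$. You separate with a linear form $\phi$ and want to translate $H$ in the $+\phi$ direction strictly faster than $\max_Q\phi$ shifts. But both of these shifts are linear functionals of the same perturbation vector $(\delta_1,\dots,\delta_k)$, and you never show they can be decoupled. Appealing to \enquote{surjectivity from (b) plus pointedness of $F_i$} is not a proof: condition (b) only controls how $H$ translates and says nothing about how the faces of $Q$ attaining $\max_Q\phi$ respond, while a perturbation that moves $H$ necessarily involves every $a_i$ (since $\phi$ is a combination of all the differences $\lineardist_{F_i}-\lineardist_{F_1}$). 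The paper sidesteps the issue entirely by proving $(1)\Rightarrow$(a) \emph{directly} instead of by contrapositive: pick $v_i\in\interior(F_i)$ normalized so that $\lineardist_{F_i}(v_i)=1$ for all $i$, and set $a'_i=a_i+\epsilon v_i$. Then every right-hand side $\lineardist_{F_i}(a_i)$ shifts by the same $\epsilon$, so $H'=H$, while $a'_i\in\interior(a_i+F_i)$ forces $Q'\subset\interior(Q)$; now (1) supplies a point of $Q'\cap H\subset\interior(Q)\cap H$, and the decoupling problem never arises. To complete your proof you should either adopt this normalization trick or genuinely establish the independence of the two shift functionals, which you currently only assert.
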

Since
\begin{equation}\label{eq:QH}
  \bisector_{(F_1,\dots,F_{k})}(a_1,\dots, a_{k}) \ = \ Q\cap H
\end{equation}
condition (a) is equivalent to \enquote{$\bisector_{(F_1,\dots,F_{k})}(a_1,\dots, a_{k})$ meets the interior of $Q$}.
\begin{proof}
  For the implication from  \enquote{\ref{it:1}} to \enquote{\ref{it:2}}, let us first show that \enquote{\ref{it:1}} forces $Q$ to be full-dimensional.
  Aiming for a contradiction, we assume that $Q$ is contained in the boundary of one of the cones $a_i +F_i$.
  That is, the polyhedron $Q_i := \bigcap_{j\neq i} (a_j + F_j)$ does not meet the interior of $a_i +F_i$, for some~$i$.
  Then any $a'_i$ in the interior of $a_i +F_i$ yields $(a'_i+F_i) \cap Q_i =\emptyset$.
  Hence $\bisector_{(F_1,\dots,F_k)}(a'_1,\dots,a'_k)=\emptyset$, where $a'_j=a_j$ for $j\neq i$.
  This contradicts \enquote{\ref{it:1}} and shows that $Q$ is full-dimensional.
  
  To see that $\bisector_{(F_1,\dots,F_k)}(a_1,\dots,a_k)$ must intersect the interior of $Q$, suppose we are given neighborhoods $U_i$ as in \enquote{\ref{it:1}}; recall \eqref{eq:QH}.
  For each $i$, choose $v_i$ in the interior of $F_i$ and such that $\lambda_i(v_i)=1$.
  Let $a'_i := a_i +\epsilon v_i$, where $\epsilon>0$ is taken small enough so that $a'_i \in U_i$.
  Observe that our choice of $v_i$ makes the affine subspace defined by $\lineardist_{F_1}(x-a'_1)=\dots=\lineardist_{F_k}(x-a'_k)$ agree with $H$. 
  Let $Q':= (a'_1+F_1) \cap \dots \cap (a'_{k}+F_{k})$, which lies in the interior of $Q$.
  We have
  \[
    \bisector_{(F_1,\dots,F_{d+1})}(a'_1,\dots, a'_{d+1})  \ = \ Q' \cap H \enspace.
  \]
  By \enquote{\ref{it:1}} this is not empty, and thus $H$ intersects the interior of $Q$.

  For condition (b) observe that $H$ can equivalently be defined by the $k-1$ affine equalities
  \begin{equation}\label{eq:H}
    (\lineardist_{F_i}- \lineardist_{F_1})(x) = \lineardist_{F_i}(a_i)- \lineardist_{F_1}(a_1)  \qquad \text{for } i=2,\dots,k \enspace.
  \end{equation}
  If the left-hand sides are linearly dependent, then one of the $k-1$ equations, say the $i$th one, is redundant.
  But then choosing a point $a'_i \in U_i$ with $\lineardist_{F_i}(a'_i) \neq \lineardist_{F_i}(a_i)$ (and letting $a'_j=a_j$ for the rest) renders the system of equations infeasible.
  Hence $\bisector_{(F_1,\dots,F_{d+1})}(a'_1,\dots, a'_{d+1}) = \emptyset$, contradicting \enquote{\ref{it:1}}.

  We now show that  \enquote{\ref{it:2}} implies  \enquote{\ref{it:1}}.
  Consider arbitrary points $a'_i$, and let $Q'_i =\bigcap_i (a'_i+F_i)$.
  Further let $H'$ be the affine subspace defined by 
  \[
    \lineardist_{F_1}(x-a'_1) \ = \ \cdots \ = \ \lineardist_{F_k}(x-a'_k) \enspace,
  \]
  so that 
  \[
    \bisector_{(F_1,\dots,F_{k})}(a'_1,\dots, a'_{k}) \ = \ Q' \cap H' \enspace.
  \]
  We want to show that if each $a'_i$ is sufficiently close to the corresponding $a_i$ for all $i$ then $Q'\cap H'$ is not empty.

  Condition (b) says that $H'$ is $(d{+}1{-}k)$-dimensional (and parallel to $H$) for every choice of $a'_i$s and that it varies continuously with the choice.
  Condition (a) says that $Q'$ stays full-dimensional if the $a'_i$s are sufficiently close to the original $a_i$s, and that it also varies continuously with the choice, in the following strong sense: consider a description of each $a_i+F_i$ by a finite system of linear inequalities.
  Then $a'_i+F_i$ is defined by a system with the same linear functions and with right-hand sides varying continuously with the $a'_i$s. 

  Thus, if each $a'_i$ is close to $a_i$, then $Q'$ and $H'$ are a full-dimensional polyhedron and a $(d{+}1{-}k)$-dimensional affine subspace close to $Q$ and $H$ respectively.
  Since, by (a), $H$ intersects the interior of $Q$, continuity implies that $H'$ still intersects the interior of $Q'$ when $a'_i$ is close enough to $a_i$ for each $i$.
  In particular, $\bisector_{(F_1,\dots,F_{k})}(a'_1,\dots, a'_{k})$ is not empty.
\end{proof}

\begin{corollary}
\label{coro:pure}
The bisector of $k$ points in general position is either empty or pure of dimension $d+1-k$.
In particular, the bisector of $d+1$ points in general position is finite, and this is empty for more than $d+1$ points.
\end{corollary}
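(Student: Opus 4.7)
My plan is to combine the polyhedral complex structure from Proposition~\ref{prop:polyhedra-bisectors} with the dimension analysis in Theorem~\ref{thm:genpos}. By that proposition, $\bisector(a_1,\dots,a_k)$ decomposes into cells $\bisector_{(F_1,\dots,F_k)}(a_1,\dots,a_k)$ indexed by tuples $(F_1,\dots,F_k) \in \Fan(K)^k$. I will first determine the dimension of the \enquote{top-type} cells where every $F_i$ is a maximal cone, and then show that every non-empty cell is a face of such a top-type cell, which yields purity.

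When each $F_i$ is the cone over a facet of $K$, Theorem~\ref{thm:genpos} characterizes non-emptiness through conditions (a) and (b). Condition (a) makes $Q=\bigcap_i (a_i+F_i)$ full-dimensional and (b) forces the affine subspace $H$ defined by $\lineardist_{F_1}(x-a_1)=\cdots=\lineardist_{F_k}(x-a_k)$ to have codimension exactly $k-1$ in $\R^d$. Since $H$ meets $\interior Q$, the cell $Q\cap H$ has dimension $d-(k-1)=d+1-k$. In particular, for $k>d+1$ the $k-1$ linear functionals in (b) cannot be independent on $\R^d$, so (b) fails and every top-type cell is empty.

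For a non-empty cell $C:=\bisector_{(F_1,\dots,F_k)}(a_1,\dots,a_k)$ with some $F_i$ non-maximal, I would replace each non-maximal $F_i$ by any maximal cone $F_i^\star\supseteq F_i$ (leaving maximal $F_i$ unchanged) and set $C^\star:=\bisector_{(F_1^\star,\dots,F_k^\star)}(a_1,\dots,a_k)$. Since $\lineardist_{F_i}$ and $\lineardist_{F_i^\star}$ agree on $F_i$, a direct calculation gives $C = C^\star \cap \bigcap_i (a_i+F_i)$, which exhibits $C$ as an intersection of faces of $C^\star$ and hence as a face of $C^\star$. Non-emptiness of $C$ forces $C^\star$ non-empty, so $\dim C^\star = d+1-k$. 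Now condition (a) of Theorem~\ref{thm:genpos} places a point of $C^\star$ into $\interior Q^\star$ with $Q^\star := \bigcap_i (a_i + F_i^\star)$; but $\interior Q^\star$ lies in the relative interior of each $a_i+F_i^\star$ and therefore avoids the proper face $a_i+F_i$. Consequently $C^\star \not\subseteq a_i+F_i$ as soon as some $F_i$ is non-maximal, so $C$ is a \emph{proper} face of $C^\star$ of dimension strictly less than $d+1-k$. This establishes purity.

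The \enquote{in particular} clauses then follow at once: $\Fan(K)^k$ is finite, so the complex has only finitely many cells; for $k=d+1$ purity gives dimension $0$ and hence a finite set of points; for $k>d+1$ the vacuity of all top-type cells combined with purity forces the whole bisector to be empty. The main obstacle in this plan is the \emph{proper} face step — ruling out $C=C^\star$ whenever some $F_i$ is non-maximal — which is exactly where condition (a) of Theorem~\ref{thm:genpos} is indispensable, as without it $C^\star$ could collapse onto $a_i+F_i$ and coincide with $C$.
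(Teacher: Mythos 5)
Your proof is correct and takes essentially the same route as the paper: Proposition~\ref{prop:polyhedra-bisectors} gives the polyhedral decomposition, and Theorem~\ref{thm:genpos} yields $\dim(Q\cap H)=d+1-k$ for the cells indexed by maximal cones. The one thing you add is an explicit justification that any cell with a non-maximal $F_i$ is a proper face of a top-type cell $C^\star$, via the observation that condition (a) places a point of $C^\star$ in $\interior Q^\star$ and hence outside each $a_i+F_i$ — a step the paper's terse proof passes over silently.
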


\begin{proof}
Every maximal non-empty cell $\bisector_{(F_1,\dots,F_{k})}(a_1,\dots, a_{k})$
is the intersection of the polyhedron $Q$ with the affine subspace $H$ of Theorem~\ref{thm:genpos}.
That result implies that $H$ has dimension $d+1-k$ and meets the interior of the full-dimensional polyhedron $Q$.
Thus, the cell has dimension $d+1-k$.
\end{proof}

\begin{corollary}
\label{coro:d+2}
If every subset of at most $d+2$ points in $S$ is in general position then so is $S$.
\end{corollary}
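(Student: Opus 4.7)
The plan is to use Theorem~\ref{thm:genpos} to show first that any $d+2$ points in general position have empty bisector, and then to observe that the bisector of any superset is also empty. This will make the general position condition for subsets of $S$ of size greater than $d+2$ trivially satisfied.

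The main step I would carry out is the following claim: if $T_0 = \{a_1, \ldots, a_{d+2}\} \subseteq S$ is in general position, then $\bisector(T_0) = \emptyset$. Aiming for a contradiction, suppose some maximal-cone cell $\bisector_{(F_1, \ldots, F_{d+2})}(a_1, \ldots, a_{d+2})$ is non-empty. Since $T_0$ is in general position, the ``$\iff$'' in the definition forces this non-emptiness to be preserved under small perturbations of the $a_i$; this is precisely condition (1) of Theorem~\ref{thm:genpos}. The theorem would then force condition (2)(b) to hold, requiring the $d+1$ linear functions $\lineardist_{F_i} - \lineardist_{F_1}$ for $i = 2, \ldots, d+2$ to be linearly independent inside the $d$-dimensional space of linear functionals on $\R^d$. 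This is impossible, so every maximal-cone cell is empty; since every point $p \in \bisector(T_0)$ lies in some such cell (pick, for each $i$, any maximal cone of $\Fan(K)$ containing $p-a_i$), the entire bisector $\bisector(T_0)$ is empty.

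To finish, let $T = \{b_1, \ldots, b_k\} \subseteq S$ be an arbitrary subset. If $k \leq d+2$, general position holds by hypothesis. If $k > d+2$, I would fix any $d+2$ of the indices, say the first, and invoke the previous step to obtain $\bisector(\{b_1, \ldots, b_{d+2}\}) = \emptyset$. By Remark~\ref{rem:emptiness_is_open}, there are neighborhoods $U_1, \ldots, U_{d+2}$ of $b_1, \ldots, b_{d+2}$ in which this emptiness persists; one takes arbitrary neighborhoods $U_{d+3}, \ldots, U_k$ for the remaining $b_i$. Then the trivial containment
\[
\bisector_{(F_1, \ldots, F_k)}(b'_1, \ldots, b'_k) \ \subseteq \ \bisector_{(F_1, \ldots, F_{d+2})}(b'_1, \ldots, b'_{d+2})
\]
forces every maximal-cone cell of the bisector of any perturbation of $T$ to be empty, so the defining ``$\iff$'' is trivially satisfied.

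The main obstacle, I expect, is the dimension count in the first step: recognizing that condition (2)(b) of Theorem~\ref{thm:genpos}, when applied to $d+2$ points, asks for $d+1$ linearly independent functions on the $d$-dimensional space $\R^d$ --- an impossibility that powers the whole argument. Once this is in hand, the rest is a routine combination of Remark~\ref{rem:emptiness_is_open} with the above containment of bisectors.
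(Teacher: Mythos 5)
Your proof is correct and takes essentially the natural route; the paper gives no explicit argument for this corollary, but the intended reasoning is exactly what you lay out: the defining condition is vacuously true for subsets of size larger than $d+2$ because their bisectors, and those of all nearby perturbations, are empty. Your Step~1 (the dimension count showing that $d+1$ differences $\lineardist_{F_i}-\lineardist_{F_1}$ cannot be linearly independent on $\R^d$) is sound, but you are re-deriving a fact the paper has already packaged: Corollary~\ref{coro:pure}, stated immediately before, already says that the bisector of more than $d+1$ points in general position is empty, so you could simply cite it in place of the appeal to Theorem~\ref{thm:genpos}(2)(b). The remainder of the argument --- using Remark~\ref{rem:emptiness_is_open} for persistence of emptiness under perturbation, and the containment of cells --- is exactly right.
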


\begin{corollary}\label{coro:genericity}
  For any $n\geq 1$, the sequences of $n$ points in $\R^{d}$ in general position form an open dense subset of  $(\R^{d})^n$.
\end{corollary}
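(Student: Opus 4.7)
The plan is to handle openness and density separately, in each case reducing to finitely many combinatorial configurations indexed by subsets of $S$ and tuples of maximal cones of $\Fan(K)$.

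\textbf{Openness.} This is essentially built into the definition. Suppose $S=(a_1,\ldots,a_n)$ is in general position. For each of the finitely many subsets $T\subseteq S$ and each of the finitely many tuples $(F_1,\ldots,F_k)$ of maximal cones of $\Fan(K)$, the definition supplies a product neighborhood of $T$ on which the emptiness of $\bisector_{(F_1,\ldots,F_k)}$ is constant. Pulling back along the coordinate projection $(\R^d)^n\to(\R^d)^{|T|}$ and intersecting the resulting finitely many open sets yields an open neighborhood $V$ of $S$ in $(\R^d)^n$. Every $S'\in V$ is again in general position, using the corresponding sub-neighborhoods inside $V$.

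\textbf{Density.} By Corollary~\ref{coro:d+2} it suffices to control subsets of size $k\le d+2$. For each such subset and each tuple $(F_1,\ldots,F_k)$ of maximal cones, Theorem~\ref{thm:genpos} writes the relevant cell as $Q\cap H$, where both $Q$ and $H$ depend piecewise-affinely on $(a_1,\ldots,a_k)$. The locus in $(\R^d)^k$ where general position fails for this choice is exactly the topological boundary between \enquote{$Q\cap H=\emptyset$} and \enquote{$Q\cap H\neq\emptyset$}, and since both objects are piecewise-affine this boundary is a finite union of positive-codimension polyhedra.

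Concretely, I would organize the argument around the two failure modes of Theorem~\ref{thm:genpos}. If condition~(b) fails, then some nontrivial combination $\sum_i c_i(\lineardist_{F_i}-\lineardist_{F_1})$ vanishes identically, and the affine system defining $H$ is consistent only along the hyperplane $\sum_i c_i(\lineardist_{F_i}(a_i)-\lineardist_{F_1}(a_1))=0$, a codimension-one condition on the sites. If (b) holds but (a) fails non-robustly -- either because $Q$ degenerates to lower dimension or because $H$ meets $Q$ only along a proper face -- the resulting \enquote{$H$ passes through this particular face of $Q$} or \enquote{the given cones share a lower-dimensional intersection} each translates to a proper affine equation on the $a_i$. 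Taking the union over the finitely many subsets and cone tuples produces a finite union of positive-codimension subsets of $(\R^d)^n$, whose complement is therefore open and dense.

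The main obstacle I anticipate is the bookkeeping in the second failure mode: one must verify that each way $H$ can meet $Q$ tangentially, or that $Q$ can collapse in dimension, is truly cut out by a nontrivial linear equation on the sites rather than encoding an open condition. Both ultimately follow from the piecewise-linear dependence of $Q$ and $H$ on the sites, together with the finiteness of face combinatorics of $Q$, but one has to enumerate the facets of $Q$ and the possible cone intersection patterns to see this cleanly.
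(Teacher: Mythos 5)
Your openness argument is a valid, slightly more direct alternative to the paper's: you work purely from the definition of general position, intersecting the finitely many product neighborhoods furnished by each subset and cone tuple and observing that every configuration in that intersection is again in general position (since a smaller product neighborhood of it still lies inside the intersection). The paper instead routes through Theorem~\ref{thm:genpos}, noting that emptiness is open by continuity of $\dist$ and that condition~(2) is open because $Q$ and $H$ depend continuously on the sites. Both are correct; yours avoids the detour.

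The density half follows the paper's route and correctly identifies the two failure modes, but it leaves a genuine gap at exactly the point you flag. For failure of condition~(b) your argument is essentially complete: a nontrivial $\sum_i c_i(\lineardist_{F_i}-\lineardist_{F_1})\equiv 0$ with some $c_j\neq 0$, $j\ge 2$, forces the consistency condition to involve $\lineardist_{F_j}(a_j)$ nontrivially, hence is codimension one. But for failure of~(a) you only assert the conclusion and say \enquote{one has to enumerate the facets of $Q$}. The paper actually does this work, in two steps. First it splits the failure of~(a) into two sub-cases --- $Q$ nonempty but not full-dimensional, handled via a \emph{minimal} degenerate subsystem of the inequality description whose equality version is over-determined and consistent; or $Q$ full-dimensional with $Q\cap H$ trapped in a facet, handled by adjoining the facet equation $H_0(a_i)$ to the system~\eqref{eq:H} --- and each produces a linear relation on the sites. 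Second, to rule out tautologies it exhibits an explicit configuration satisfying both (a) and (b): take each $a'_i$ in the interior of $-F_i$ with $\lineardist_{F_i}(a'_i)=-1$, so the origin lies in $\interior Q$ and on $H$. Without some argument of this kind, your appeal to \enquote{the piecewise-linear dependence of $Q$ and $H$ on the sites} does not by itself show the boundary loci are cut out by \emph{proper} affine equations rather than trivial ones; that is the remaining work, and it is not done in your proposal.
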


\begin{proof}
  Let $S\subset\R^d$ be a set of cardinality~$n$.
  Then $S$ is in general position if and only if, for each subset $\{a_1,\dots,a_k\}\subset S$ and maximal cones  $F_1,\dots,F_k\in \Fan(K)$, the polyhedron $\bisector_{(F_1,\dots,F_k)}(a_1,\dots,a_k)$  either is  empty or satisfies condition (1) of Theorem~\ref{thm:genpos}.
  Since there are finitely many choices of $\{a_1,\dots,a_k\}$ and $\{F_1,\dots,F_k\}$, it suffices to prove the statement for one such choice.

  For openness: Emptyness is an open condition because of the continuity of $\dist(\cdot,\cdot)$.
  On the other hand, Theorem~\ref{thm:genpos} says that condition (1) is equivalent to (2), which is open by the arguments in the proof (namely, the fact that both $Q$ and $H$ depend continuously on the sites).   	

  For density, we are going to show that if $Q\cap H$ is not empty but fails to satisfy condition (a) or (b)
then the set $\{a_1,\dots,a_k\}$ lies in one of finitely many linear hyperplanes in $(\R^{d})^k$. 
To emphasize that $Q$ and $H$ depend on the choice of sites we denote them $Q(a_1,\dots,a_k)$ and $H(a_1,\dots,a_k)$.
 
 If (b) fails for $\{a_1,\dots,a_k\}$ then the linear system \eqref{eq:H} defining $H(a_1,\dots,a_k)$ is feasible but overdetermined, which implies a linear relation, depending solely on $F_1,\dots,F_k$, among the 
 right-hand sides $\lineardist_{F_i}(a_i)- \lineardist_{F_1}(a_1)$. The relation is not tautological on the  $a_i$s since, as shown in the proof of Theorem~\ref{thm:genpos}, it is easy to construct a point set $\{a'_1,\dots,a'_k\}$ with $H(a'_1,\dots,a'_k)=\emptyset$.
 
 For (a), consider the inequality descriptions of the cones $F_i$ and translate them to obtain an inequality description of $Q(a_1,\dots,a_k)$ as the feasibility region of a system $\mathcal S(a_1,\dots,a_k)$ of affine inequalities with fixed gradients and with right-hand sides parameterized linearly by the $a_i$s. 
 If (a) fails for $\{a_1,\dots,a_k\}$ then one of two things happen: 
 \begin{itemize}
\item  \emph{$Q(a_1,\dots,a_k)$ is non-empty but not full-dimensional}. Consider a minimal subsystem of $\mathcal S(a_1,\dots,a_k)$ that already defines a non-full-dimensional feasibility region. Minimality implies that this feasibility region is an affine subspace of $\R^d$ and that turning the inequalities to equalities produces an over-determined subsystem. This implies, as in the previous case, a linear relation among the $a_i$s. 

\item \emph{$Q(a_1,\dots,a_k)$ is full-dimensional but $Q(a_1,\dots,a_k)\cap H(a_1,\dots,a_k)$ is contained in its boundary}. This implies $Q(a_1,\dots,a_k)\cap H(a_1,\dots,a_k)$ to be contained in a facet of $Q(a_1,\dots,a_k)$. Let $H_0(a_i)$ be the hyperplane containing that facet. Note that the hyperplane $H_0$ depends only on one $a_i$ since it comes from the description of one of the cones $a_i + F_i$. Then, adding to the the linear system \eqref{eq:H} the equation defining $H_0(a_i)$ produces again an over-determined system, hence there is a linear relation among the $a_i$s. 


\end{itemize}
The relations for condition (a) are not tautological on the $a_i$s since we can easily make $Q(a'_1,\dots,a'_k)$ full-dimensional and $H(a'_1,\dots,a'_k)$ intersect its interior as follows: choose each $a'_i$ in the interior of $-F_i$ and such that $\lineardist_{F_i}(a_i) = -1$, so that both $H$ and the interior of $Q$ contain the origin. The relations are finitely many since we have at most one for each subsystem (respectively equation) of the system $\mathcal S(a_1,\dots,a_k)$, which depends only on the choice of $F_i$s.
\end{proof}

For the case of the tropical norm, condition (b) admits a nice combinatorial characterization.
Observe that a choice of facets $F_1,\dots,F_k \in \Fan(\ball{d})$ can be encoded as a directed graph on the vertex set $[d+1]$ and with an arc $a_i$ going from the coordinate that is minimized at $F_i$ to the coordinate that is maximized at $F_i$, for $i=1,\dots,k$.
We denote this graph $G(F_1,\dots,F_k)$.

\begin{proposition}
\label{prop:digraph}
For the case of the tropical norm, condition 2.(b) of Theorem~\ref{thm:genpos} holds if and only if
the graph $G(F_1,\dots,F_k)$ either has no (undirected) cycle or it has a unique cycle and it is unbalanced; that is, the number of arcs in one direction is different from the other direction.
\end{proposition}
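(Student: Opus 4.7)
The strategy is to translate condition 2(b) of Theorem~\ref{thm:genpos} into a statement about circulations on the directed multigraph $G=G(F_1,\dots,F_k)$, and then to case-split on its cyclomatic number.

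Recall that each facet of $\ball{d}$ is indexed by an ordered pair in $[d+1]$, so if $F_i$ corresponds to the arc $(s_i,t_i)$, then the linear function $\lineardist_{F_i}$ on $\torus{d+1}$ equals $x\mapsto x_{t_i}-x_{s_i}$; this is precisely the signed incidence vector $e_{t_i}-e_{s_i}$ of that arc. Using the substitution $\lambda_1=-\sum_{i\geq 2}\mu_i$ and $\lambda_i=\mu_i$ for $i\geq 2$, a nontrivial linear dependence $\sum_{i\geq 2}\mu_i(\lineardist_{F_i}-\lineardist_{F_1})=0$ corresponds bijectively to a nontrivial $(\lambda_1,\dots,\lambda_k)$ satisfying both $\sum_i\lambda_i\lineardist_{F_i}=0$ and $\sum_i\lambda_i=0$. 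Hence condition 2(b) fails if and only if the space $C$ of such $\lambda$'s is non-trivial.

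Next, a nonzero solution of $\sum_i\lambda_i(e_{t_i}-e_{s_i})=0$ is exactly a circulation on $G$: at every vertex of $[d+1]$ the weighted sum of incoming arcs equals that of outgoing arcs. By standard algebraic graph theory, the space of circulations on $G$ has dimension equal to the cyclomatic number $k-|V'|+c'$, where $V'$ is the set of vertices incident to at least one arc and $c'$ is the number of connected components of the underlying undirected subgraph on $V'$; equivalently, this counts independent (undirected) cycles in $G$.

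Case analysis then finishes the proof. If $G$ has no cycle, the circulation space is trivial and condition 2(b) holds. If $G$ has at least two independent cycles, the circulation space has dimension $\geq 2$ and therefore meets the codimension-one hyperplane $\{\sum_i\lambda_i=0\}$ non-trivially, so condition 2(b) fails. Finally, if $G$ has a unique cycle, the circulation space is one-dimensional, generated by the \emph{signed cycle vector} $v$ obtained by fixing a cyclic orientation of the cycle and setting $v_i=+1$ when the arc $(s_i,t_i)$ agrees with this orientation, $v_i=-1$ when it disagrees, and $v_i=0$ for arcs not on the cycle. One checks that $\sum_i v_i=f-b$, where $f$ and $b$ count the arcs in the two directions around the cycle, so condition 2(b) holds precisely when $f\neq b$, i.e., when the cycle is unbalanced. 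The main technical ingredient is the identification of the generator of the one-dimensional circulation space with the signed cycle vector; everything else reduces to the rank formula for graph incidence matrices and the elementary observation that a codimension-one hyperplane always meets a subspace of dimension $\geq 2$ non-trivially.
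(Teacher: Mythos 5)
Your proof is correct and follows essentially the same approach as the paper: both reduce condition~2(b) to a statement about the cycle (circulation) space of the digraph $G$, using the observation that $\lineardist_{F_i}$ is the signed incidence functional of the corresponding arc. Where the paper argues informally (stating that a balanced dependence "either corresponds to a balanced circuit or decomposes into two or more linear dependences"), your reformulation via the substitution $\lambda_1=-\sum_{i\ge 2}\mu_i$ and the ensuing dimension count for the circulation space intersected with the hyperplane $\sum_i\lambda_i=0$ makes the same idea precise and handles all the cases uniformly; this is a welcome tightening but not a fundamentally different argument.
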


\begin{proof}
A cycle in $G(F_1,\dots,F_k)$ is equivalent to a linear dependence among the corresponding linear functions $\lineardist_{F_i}$s, by simply adding them with signs corresponding to the direction of the arcs along the cycle.
If the cycle is balanced then $\lineardist_{F_1}$ can be subtracted from each $\lineardist_{F_i}$ so that the corresponding functions $(\lineardist_{F_i}- \lineardist_{F_1})$ are also dependent.
The same thing can be done if $G(F_1,\dots,F_k)$ has two different (unbalanced) cycles, since a linear combination of the two corresponding dependences can be made balanced.

Conversely, any linear dependence among the functions $(\lineardist_{F_i}- \lineardist_{F_1})$ corresponds to a balanced dependence among the corresponding $\lineardist_{F_i}$.
The latter either corresponds to a balanced circuit in the graph or decomposes into two (or more) linear dependences with distinct supports.
\end{proof}

As a consequence of Corollary~\ref{coro:pure} the bisector of a set $S$ of $d+1$ points in general position is a finite set of points, which we call \emph{circumcenters} of~$S$.
In dimension two, three points in (weak) general position have a most one circumcenter, as we show in Corollary~\ref{coro:3points} below.
In higher dimension the same is known to be false for other polytopal norms \cite{IKLM95}, and here is an  example for the tropical norm:

\begin{example}[Non-uniqueness of circumcenters]
\label{exm:circumcenters}
Let us consider the four points $a_1= (0,2,3,3)$, $ a_2=(0,4,2,2)$, $ a_3=(2,4,1,1)$ and $a_4=(4,0,2,2)$.
Their bisector contains the points $x = (0, 0, 1, -1)$ and  $y = (0, 0, -1, 1)$.
Indeed, both $x$ and $y$ are at distance $4$ from all the $a_i$'s since we have
\begin{align*}
 a_1 - x = (0,2,2,4), & \qquad a_1 - y = (0,2,4,2), \\
 a_2 - x = (0,4,1,3), & \qquad a_2 - y = (0,4,3,1), \\
 a_3 - x = (2,4,0,2), & \qquad a_3 - y = (2,4,2,0), \\
 a_4 - x = (4,0,1,3), & \qquad a_4 - y = (4,0,3,1).
\end{align*}

The points $a_1,\dots,a_4$ are not in weak general position, as they lie in the plane $x_3-x_4 =0$.
However, they satisfy conditions (a) and (b) of Theorem~\ref{thm:genpos} for the polytopes $Q_x$ and $Q_y$ containing the circumcenters $x$ and $y$:
For condition (b) observe that the digraphs corresponding to $x$ and $y$ are, respectively, $\{14, 12, 32, 21\}$ and $\{13, 12, 42, 21\}$. They both have a single cycle, $\{12,21\}$, which is unbalanced.
Condition (a) follows from the fact that  $x$ (resp.\ $y$) is in the interior of all the cones whose intersection defines $Q_x$ (resp.\ $Q_y)$. 
This is equivalent to the fact that all the vectors $a_i -x$ and $a_i-y$ have a unique maximum and a unique minimum entries.

Since condition (b) does not depend on the sites and condition (a) is, by Theorem~\ref{thm:genpos}, open, any perturbation of the sites will still produce at least two circumcenters. In particular, by 
Corollary~\ref{coro:genericity}, there are sites in general position for which this happens.
\end{example}

\subsection{Halfspheres, sectors, and the bisector of two points}
\label{sec:sectors}

The topology of a bisector is closely related to the following partition of $\partial K$.
Let $S\subseteq\R^d$ be a finite set of sites, and pick $a,b\in S$ distinct.
The \emph{open halfsphere in the direction of $b-a$}, denoted as $H(b-a)$, is the set of points in $\partial K$ whose exterior normal cone is contained in $(b-a)^\vee:= \smallSetOf{\lineardist}{ \lineardist(b-a) >0}$.
Informally, $H(b-a)$ are the faces of $K$ that \enquote{can be seen} from the direction $(b-a)$.
For a fixed site $a\in S$, the \emph{sector of $a$} is the set
\[
  H_S(a) \ = \ \bigcap_{b\in S\setminus\{a\}} H(b-a) \enspace.
\]
We denote $\mathcal{H}_S:= \smallSetOf{H_S(a)}{a \in S}$.
Observe that $H(b-a)$ and, hence, $H_S(a)$, are open in $\partial K$.

\begin{lemma}
\label{lem:halfsphere_facets}
Let $F_1,\dots, F_m$ be the facets of $K$ and let $\lineardist_{F_i}(x) \le 1$ be the valid linear inequality defining $F_i$. Then, for each $a\in S$,
\[
H_S(a) \ = \
\relint\left(
\bigcup \SetOf{F_i}{\lineardist_{F_i}(a) < \lineardist_{F_i}(b) \text{ for all } b\in S\backslash a}
\right) \enspace.
\]

In particular, $H_S(a)\cap H_S(b)=\emptyset$ for every $a,b\in S$ and, if $S$ is in weak general position,
\[
  \bigcup_{a\in S} \overline{H_S(a)} \ = \ \partial K \enspace.
\]
where $\overline{H_S(a)}$ denotes the topological closure of $H_S(a)$.
\end{lemma}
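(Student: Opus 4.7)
The plan is to translate the defining condition of $H_S(a)$ from a statement about exterior normal cones into one about the facets passing through each boundary point. Recall that the exterior normal cone of $K$ at $x\in\partial K$ is the closed cone positively spanned by $\{\lineardist_{F_i}:x\in F_i\}$, and since $(b-a)^\vee=\{\lineardist:\lineardist(b-a)>0\}$ is an open halfspace missing the origin, a finitely generated closed cone sits inside $(b-a)^\vee$ if and only if each of its generators does. Hence $x\in H(b-a)$ amounts to $\lineardist_{F_i}(b-a)>0$ for every facet $F_i\ni x$; intersecting over $b\in S\setminus\{a\}$ yields the reformulation
\[
 x\in H_S(a)\iff\text{every facet } F_i \text{ with } x\in F_i \text{ satisfies } \lineardist_{F_i}(a)<\lineardist_{F_i}(b) \text{ for all } b\in S\setminus\{a\}.
\]

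Writing $U_a$ for the union of facets appearing in the lemma statement, I would next prove $H_S(a)=\relint_{\partial K}(U_a)$ by two inclusions. For $\supseteq$, suppose $x\in H_S(a)$ and let $F$ be the face of $K$ with $x\in\relint F$. Every facet through $x$ is a summand of $U_a$, and a small enough neighborhood of $x$ in $\partial K$ is covered by faces containing $F$, hence by facets of $U_a$; so $x\in\relint_{\partial K}(U_a)$. For $\subseteq$, pick $x\in\relint_{\partial K}(U_a)$ and any facet $F_i\ni x$. Approaching $x$ from inside $\relint F_i$ produces points of $U_a$ lying in no other facet, forcing $F_i$ itself to be a summand of $U_a$; thus every facet through $x$ satisfies the strict inequality above, so $x\in H_S(a)$.

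Disjointness is then immediate from the reformulation: a point $x\in H_S(a)\cap H_S(b)$ with $a\neq b$ would force the (nontrivial) normal cone at $x$ into both open halfspaces $(b-a)^\vee$ and $(a-b)^\vee$, which is impossible. For the covering statement, weak general position ensures that the values $\{\lineardist_{F_i}(a):a\in S\}$ are pairwise distinct for every facet $F_i$, so each $F_i$ admits a unique minimizer $a^\ast\in S$ of $\lineardist_{F_i}$, and by construction $F_i$ is a summand of $U_{a^\ast}$. Since a finite union of closed facets agrees in $\partial K$ with the closure of its own relative interior, $F_i\subseteq U_{a^\ast}=\overline{H_S(a^\ast)}$, and taking the union over all facets $F_i$ yields $\partial K$. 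The main technical obstacle will be the normal-cone--to--facet equivalence at the very first step; once that is in place, everything else is careful bookkeeping of faces, facets, and strict inequalities.
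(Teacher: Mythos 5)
Your proof is correct and follows essentially the same route as the paper's, which states (more tersely) that $H_S(a)$ contains the relative interiors of the listed facets directly from the definition, then uses convexity of the cones $(b-a)^\vee$ to absorb the relative interiors of lower-dimensional faces contained only in such facets, and derives the covering from uniqueness of the minimizer under weak general position. Your version makes explicit the key reformulation — that $x\in H_S(a)$ iff every facet through $x$ has $\lineardist_{F_i}(a)<\lineardist_{F_i}(b)$ for all $b\ne a$, because the normal cone at $x$ is generated by the $\lineardist_{F_i}$ of the incident facets — and then proves the two inclusions carefully; this is the same argument with the implicit steps spelled out. One small point you gloss over (as does the paper): the exterior normal cone as a closed cone contains the origin, so "contained in $(b-a)^\vee$" really means "all nonzero elements lie in $(b-a)^\vee$," which is exactly the generator condition you use. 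Otherwise, the neighborhood-of-$x$-covered-by-incident-facets argument, the disjointness via $(b-a)^\vee\cap(a-b)^\vee=\emptyset$, and the observation that a finite union of facets equals the closure of its relative interior in $\partial K$ are all sound.
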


\begin{proof}
  It is clear from the definition that $H_S(a)$ contains the relative interior of every $F_i$ with $\lineardist_{F_i}(a) <\lineardist_{F_i}(b)$ for $ b\in S\setminus a$.
  By convexity of the cones $\smallSetOf{\lineardist}{\lineardist(b-a) >0}$ for each $a,b$, $H(b-a)$ (hence $H_S(a)$) also contains the relative interior of every lower dimensional face contained only in such facets.
  This proves the first formula.
  The second part follows from the first and the fact that in weak general position the minimum of each $\lineardist_{F_i}$ is attained at a single point of $S$.
\end{proof}

\begin{remark}
\label{rem:halfsphere_not_full}
Assuming weak general position,
Lemma~\ref{lem:halfsphere_facets} allows us to think of  $\mathcal{H}_S$ as a labeling of the facets of $K$ by the elements of $S$ or, equivalently, as a map $\Fan(K) \to S$.
If $K$ is centrally symmetric, then each pair of opposite facets $F$ and $-F$ belong one to $H(b-a)$ and the other to $H(a-b)$. If $K$ is not, we can still guarantee that  $H(a-b)$ is never empty, and always disjoint from $H(b-a)$. As a consequence, $H(b-a)$ (and hence $H_S(a)$) cannot contain all the (relative interiors of) facets of~$K$.

For the case $K=\balld$ of the tropical ball this partition of the facets translates into something more meaningful.
Recall (see Proposition~\ref{prop:digraph} and the paragraph before it) that facets of $\balld$ can be represented as the arcs in the complete digraph on $d+1$ nodes. In particular, $\mathcal{H}_S$ colors the arcs by the points of $S$. Then:

\begin{enumerate}
\item Each coloring is a partial ordering of vertices, i.e. there is no mono\-chromatic cycle.
\item For the case of two points in general position, the two colors are opposite acyclic tournaments. In particular, there is a bijection between the possible halfspheres $H(b-a)$ and the total orderings of $d+1$ elements.
\end{enumerate}
\end{remark}

\begin{theorem}[\cite{IKLM95} for $d=3$]
  \label{thm:central_projection}
  Let $a,b\in\R^d$ be in weak general position.
  Then the central projection from~$a$ induces a homeomorphism between $\bisector(a,b)$ and $a + H(b-a)$.
  Hence, $\bisector(a,b)$ is homeomorphic to $\R^{d-1}$.
\end{theorem}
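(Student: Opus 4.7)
The plan is to verify that the central projection
$\pi_a(p) := a + (p-a)/\dist(a,p)$,
which is defined and continuous on $\R^d \setminus \{a\} \supseteq \bisector(a,b)$ (note $a \notin \bisector(a,b)$ since $a \neq b$) and lands in $a + \partial K$, restricts to the desired homeomorphism onto $a + H(b-a)$. The argument proceeds by a direct ray-by-ray analysis: for each $v \in \partial K$, I count the intersections of the ray $\{a + rv : r \geq 0\}$ with $\bisector(a,b)$ via the zeros of the one-variable function
\[
\phi_v(r) \;:=\; \dist(b,\, a+rv) - r \;=\; \dist(0,\, rv - (b-a)) - r.
\]

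Since $\dist(0,\cdot)$ is a convex piecewise linear Minkowski gauge, $\phi_v$ is convex and piecewise linear in $r$, with $\phi_v(0) = \dist(b,a) > 0$. For $r$ sufficiently large, $rv - (b-a)$ lies in some fixed maximal cone $\sigma \in \Fan(K)$ (the one entered when perturbing $v$ by $-(b-a)/r$), on which $\dist(0,\cdot) = \lambda_\sigma$ is linear with $\lambda_\sigma(v) = \dist(0,v) = 1$; consequently $\phi_v(r) = -\lambda_\sigma(b-a)$ for all large $r$. A convex function with a horizontal asymptote is monotonically non-increasing toward it, so together with $\phi_v(0) > 0$ we conclude that $\phi_v$ has a (necessarily unique) zero precisely when $-\lambda_\sigma(b-a) < 0$. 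When $v$ is in the relative interior of a facet $F$ we have $\lambda_\sigma = \lambda_F$, and by Lemma~\ref{lem:halfsphere_facets} the condition $\lambda_F(b-a) > 0$ is exactly $v \in H(b-a)$; weak general position rules out the borderline case $\lambda_F(b-a) = 0$.

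This yields the bijection $\pi_a \colon \bisector(a,b) \to a + H(b-a)$ on the dense set of directions in relative interiors of facets; lower-dimensional strata of $\partial K$ are handled by continuity of $\phi_v$ in $v$ together with the decomposition $\partial K = \overline{H(b-a)} \cup \overline{H(a-b)}$ from Lemma~\ref{lem:halfsphere_facets}. The inverse $a+v \mapsto a + r(v)v$ is continuous because $r(v)$ is the unique strictly-monotone zero of a continuously-varying convex piecewise linear function. To finish, $H(b-a) \cong \R^{d-1}$ follows from the classical fact that orthogonal projection along direction $b-a$ carries the \enquote{upper shell} $H(b-a)$ bijectively onto the relative interior of the shadow of $K$ on $(b-a)^\perp$, an open convex subset of a $(d-1)$-dimensional affine subspace. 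The main obstacle I anticipate is exactly the analysis on lower-dimensional faces of $\partial K$, where the governing cone $\sigma$ depends delicately on the perturbation direction $-(b-a)$; weak general position is precisely the hypothesis that keeps $\lambda_\sigma(b-a)$ nonzero for each maximal cone, so that the horizontal asymptote of $\phi_v$ is non-zero on a dense subset of directions and the continuous extension to the lower-dimensional strata is unambiguous.
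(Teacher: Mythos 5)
Your plan follows the same ray-by-ray strategy as the paper's proof: for each $v\in\partial K$, study the convex, piecewise-linear function $\phi_v$, use the eventual linearity on a maximal cone $\sigma\in\Fan(K)$ to read off the horizontal asymptote, and conclude that the ray from $a$ in direction $v$ meets $\bisector(a,b)$ exactly once precisely when the asymptote is negative. The paper obtains continuity of the inverse from properness of the central projection, whereas you argue it directly from locally uniform convergence of $\phi_{v_n}$ to $\phi_v$ together with the strict sign change at the unique root; both are sound. You also sketch a proof that $H(b-a)\cong\R^{d-1}$ via the orthogonal shadow of $K$ on $(b-a)^\perp$, a step the paper asserts without proof. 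These are cosmetic rather than structural differences.

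The one genuine gap is the one you flag yourself: the passage from facet-interior directions to all of $H(b-a)$. Joint continuity of $(v,r)\mapsto\phi_v(r)$ does \emph{not} settle whether $\phi_v$ has a zero when $v$ lies on a lower-dimensional face, because the asymptotic constant $-\lambda_\sigma(b-a)$ is not a continuous function of $v$; the governing cone $\sigma$ jumps as $v$ crosses strata, and it is not the cone of an arbitrary facet containing $v$. What closes the gap is a direct identification of $\sigma$ valid for every $v\in\partial K$. From
\[
\dist\bigl(0,\,rv-(b-a)\bigr)\;=\;\max_{G}\bigl(r\lambda_G(v)-\lambda_G(b-a)\bigr)
\]
one sees that for $r$ large the maximum is attained among facets $G$ with $v\in\overline G$ (those with $\lambda_G(v)=1$), and among those at the one minimizing $\lambda_G(b-a)$. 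Hence the asymptote equals $-\min_{G\ni v}\lambda_G(b-a)$, and this is negative iff $\lambda_G(b-a)>0$ for \emph{every} facet $G$ containing $v$, which by Lemma~\ref{lem:halfsphere_facets} is exactly the condition $v\in H(b-a)$; weak general position only excludes the degenerate value $0$. With this replacement in place of the density/continuity appeal, your argument is complete and essentially coincides with the paper's.
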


\begin{proof}
Let us first show that $\bisector(a,b)$ is contained in $a + \cone(H(b-a))$. To seek a contradiction, let $c\in\bisector(a,b)$ such that $c-a\notin \cone(H(b-a))$.
This implies that  the smallest ball centered at $a$ that contains $c$ touches it at a facet $F$ with functional $\lineardist_F$ such that $\lineardist_F(b-a) \leq 0$. Now, $c$ is equidistant to $a$ and $b$, and $a$ and $b$ cannot be in the same facet of the ball centered at $c$ (because they are in weak general position). Therefore, $\dist(c,\epsilon a+(1-\epsilon)b)< \dist(c,a)$, by convexity of the ball. This contradicts the fact that $\lineardist_{F_i}(b-a) \leq 0$.

Hence, we have a well-defined map $\phi: \bisector(a,b) \to a + H(b-a)$ given by central projection. The map $\phi$ is continuous since it is the restriction of central projection. It is also proper (that is, the inverse image of a compact set is compact) by a following argument: Let $C$ be a compact subset of $H(b-a)$. By continuity, $\phi^{-1}(C)$ is closed in $\bisector (a,b)$, hence in $\R^d$ since $\bisector (a,b)$ itself is closed (it is the zero set of the continuous function $d(x,a)-  d(x,b)$). Thus, we only need to prove that $\phi^{-1}(C)$ is bounded.
This follows from the fact that
\[
\phi^{-1}(C) \subset (a + \cone(K)) \ \cap \ (b+ \cone(\overline{H(a-b)} )) \enspace,
\]
and that $\cone(C)$ and $\cone(\overline{H(a-b)} )$ are two closed linear cones meeting only at the origin, since ${H(a-b)}$ and ${H(b-a)}$ are open and disjoint in $\partial K$.

Once we know $\phi$ is proper and continuous, we only need to check that this map is bijective in order for it to be a homeomorphism. To show this, we construct its inverse. For each $v\in H(b-a)$ we consider
the ray $r_v= \{a+\alpha v: \alpha \geq 0\}$. Along $r_v$, the distance to $a$ is linear in $\alpha$, the distance to $b$ is convex in $\alpha$ and both functions are continuous. Observe also that
\begin{align*}
  &\dist(a+0 v, a) =0 \,,  \quad \dist(a+0v, b)>0 \,, \\ 
  \text{ and } &\lim_{\alpha\rightarrow\infty}\dist(a+\alpha v,b)<\lim_{\alpha\rightarrow\infty}\dist(a+\alpha v,a) \enspace.
\end{align*}
  The last inequality comes from the fact that as the we move farther away from $a$ along $r_v$, eventually $(a+\alpha v)-b$ will be in the same cone of $\Fan(K)$ as $(a+\alpha v)-a=\alpha v$ (by weak general position), and $\langle b-a, v\rangle >0$ since $v\in H(b-a)$.

  Hence, the function $\alpha \mapsto \dist(a+\alpha v,b)-\dist(a+\alpha v, a)$ is negative at zero, positive at infinity, continuous, and convex. Therefore, it has exactly one root, which means $r_v$  intersects the bisector exactly once. We define $\psi(a+v)$ as this unique intersection point.

The maps $\phi$ and $\psi$ are clearly inverses of one another.
\end{proof}

Looking at the proof, the reader can check that central projection gives a proper and continuous map from $\bisector(a,b)$ to $a + H(b-a)$ even without assuming weak general position.
We only need weak general position to construct the inverse.

\begin{corollary}
\label{coro:empty_sector}
If $S$ is in weak general position and there is an empty sector $H_S(a)\in\mathcal{H}_S$ then the bisector of $S$ is empty.
\end{corollary}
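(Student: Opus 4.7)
The plan is to prove the contrapositive: assume $\bisector(S)$ is non-empty and deduce that every sector $H_S(a)$ is non-empty.

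First I would fix a site $a \in S$ and pick any $x \in \bisector(S)$; set $r := \dist(a,x)$, which is positive unless $|S|=1$ (a trivial case). Since $x$ is equidistant from all sites, it lies in $\bisector(a,b)$ for every $b \in S \setminus \{a\}$, and the hypothesis that $S$ is in weak general position ensures that each such pair $(a,b)$ satisfies the hypotheses of Theorem~\ref{thm:central_projection}.

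The key step is then just to apply Theorem~\ref{thm:central_projection} pair by pair. For each $b \in S\setminus\{a\}$ the theorem provides a homeomorphism $\bisector(a,b) \to a + H(b-a)$ obtained by central projection from $a$. Since $\dist(0, (x-a)/r) = 1$ by homogeneity and translation invariance of the norm, this central projection sends $x$ to $a + (x-a)/r$, and hence $(x-a)/r \in H(b-a)$. Intersecting over all $b \in S\setminus\{a\}$ yields
\[
  (x-a)/r \ \in \ \bigcap_{b\in S\setminus\{a\}} H(b-a) \ = \ H_S(a),
\]
so $H_S(a) \neq \emptyset$. This contradicts the assumption of an empty sector, completing the contrapositive.

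I do not anticipate a genuine obstacle here: Theorem~\ref{thm:central_projection} does all the heavy lifting, and the only thing to check carefully is that the normalization makes $(x-a)/r$ land inside the \emph{open} set $H(b-a)$ rather than merely on its closure — but this is automatic because the theorem asserts that the homeomorphism image is exactly $a+H(b-a)$. No extra regularity assumption beyond weak general position is needed.
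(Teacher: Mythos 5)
Your proof is correct and is essentially the paper's own argument: fix a point in $\bisector(S)$, apply Theorem~\ref{thm:central_projection} to each pair $(a,b)$ to see that its central projection from $a$ lands in $H(b-a)$, and intersect over all $b$ to conclude $H_S(a)\neq\emptyset$. The only cosmetic difference is that you make the central projection explicit as $x\mapsto a+(x-a)/r$, whereas the paper just invokes the map; both are fine.
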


\begin{proof}
  Assume that there is a point $c\in\bisector(S)$.
  For a site $a\in S$ let us show that $H_S(a)\neq \emptyset$.
  By definition, $c\in\bisector(a,b)$ for $b\in S\setminus \{a\}$. By Proposition \ref{thm:central_projection}, each  $\bisector(a,b)$ can be mapped  to $H(b-a)$ by central projection.
  Since $c$ is in all of these bisectors, the central projection of $c$ into the ball $a+ K$ lies in $H(b-a)$ for all $b$, and hence in $H_S(a)$.
\end{proof}

The converse of Proposition~\ref{coro:empty_sector} is true for \emph{three} points in arbitrary dimension (Theorem~\ref{thm:3points}) but not for more, even in general position, as the following example shows:

\begin{example}[Empty bisector, with non-empty sectors]
\label{exm:empty_bisector}
Consider $a= (1,-1,$ $0,0)$, $b=(-1,1,0,0)$, $c=(0,0,2,-2)$ and $d=(0,0,-2,2)$.
Then we have
\[
  \begin{split}
    \bisector(a,b) \ = \ & \SetOf{x}{x_3+1 \leq x_1, x_2, \leq x_4-1} \\
    &\quad \cup  \SetOf{x}{x_4+1 \leq x_1, x_2, \leq x_3-1} \cup  \SetOf{x}{x_1 = x_2} \enspace.
  \end{split}
\]
By symmetry, we also have
\[
  \begin{split}
    \bisector(c,d) \ = \ & \SetOf{x}{x_1+2 \leq x_3, x_4, \leq x_2-2} \\
    &\quad \cup \SetOf{x}{x_2+2 \leq x_3, x_4, \leq x_1-2} \cup \SetOf{x}{x_3 = x_4} \enspace.
  \end{split}
\]
Since $\bisector(a,b,c,d)$ lies in the intersection of the two, we have
\[
\bisector(a,b,c,d) \ \subseteq \ \SetOf{x}{x_1=x_2, x_3=x_4} \enspace.
\]
So for $x\in\bisector(a,b,c,d)$, we may assume $x_3=0$, which entails:
\[
\dist(a,x) \ = \ \max\{x_1 + 1, 0 \} - \min\{ x_1-1,0\} \ = \ \max\{|x_1| + 1, 2 \}\leq  |x_1| + 2 \enspace,
\]
with equality only when $x_1=0$ and
\[
\dist(c,x) \ = \ \max\{x_1, 2 \} - \min\{ x_1, -2\} \ = \ \max\{|x_1| + 2, 4 \} \geq |x_1|+2 \enspace,
\]
with equality only if $|x_1|\ge 2$. This shows that $\bisector(a,b,c,d)=\emptyset$.

However, $\mathcal H_S$ has no empty sector since the sectors of $a,b,c,d$ contain the facets with outer normals $(1,-1,0,0)$, $(-1,1,0,0)$, $(0,0,1,-1)$ and $(0, 0,$ $-1,1)$, respectively.
Both this property and the emptiness of $\bisector(a,b,c,d)$ are preserved under perturbation, so this behavior happens also in general position.
\end{example}

\subsection{Bisectors of three points}
\label{sec:3points}

The goal of this section is to prove our first main result.

\begin{theorem}
  \label{thm:3points}
  Let $S=\{a_1,a_2,a_3\}$ be a set of three distinct points in $\R^d$ which lie in weak general position with respect to a polytope $K$.
  If $H_S(a_i)\neq \emptyset$ for $i=1,2,3$ then $\bisector(a_1,a_2,a_3)$ is homeomorphic to a non-empty open subset of $\R^{d-2}$.
\end{theorem}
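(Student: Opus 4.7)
The plan is to use central projection from $a_1$, extending Theorem~\ref{thm:central_projection}, to realize $\bisector(a_1,a_2,a_3)$ as a level set in the sector $H_S(a_1)$, and then to establish a PL $(d{-}2)$-manifold structure via a transversality argument. First I would apply Theorem~\ref{thm:central_projection} to each pair $(a_1,a_j)$ with $j\in\{2,3\}$, obtaining continuous positive functions $\alpha_j$ on $H(a_j-a_1)\cap\partial K$ such that every point of $\bisector(a_1,a_j)$ is uniquely written as $a_1+\alpha_j(v)\,v$. A point $c=a_1+\alpha v$ (with $v\in\partial K$) lies in the triple bisector precisely when $v\in H_S(a_1)$ and $\alpha=\alpha_2(v)=\alpha_3(v)$, so central projection from $a_1$ yields a homeomorphism
\[
  \Phi \colon \bisector(a_1,a_2,a_3) \ \longrightarrow \ E \ := \ \SetOf{v\in H_S(a_1)\cap\partial K}{\alpha_2(v)=\alpha_3(v)}
\]
with continuous inverse $v\mapsto a_1+\alpha_2(v)\,v$; the problem reduces to showing that $E$ is a non-empty open subset of a topological $(d{-}2)$-manifold.

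Next I would analyze $E$ via the continuous auxiliary map $\Xi(v,t):=(\beta_2(v,t),\beta_3(v,t))$ on $\bigl(H_S(a_1)\cap\partial K\bigr)\times\R_{>0}$, where $\beta_j(v,t):=\dist(a_1+tv,a_j)-t$. Its zero set is the graph of $\Phi^{-1}$ in these coordinates. For fixed $v$, convexity of $\dist(\cdot,a_j)$ makes $\beta_j(v,\cdot)$ convex, positive at $t=0$, with a strictly negative asymptote $-\lineardist_F(a_j-a_1)$ (where $F$ is the facet of $K$ that contains $v$ in its relative interior), so it is monotone non-increasing with a unique zero; weak general position excludes this asymptote from being zero, so both one-sided derivatives of $\beta_j$ in $t$ at any zero of $\Xi$ are strictly negative. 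Weak general position also forces, at each zero $(v_0,t_0)$ of $\Xi$, that $t_0v_0-(a_2-a_1)$ and $t_0v_0-(a_3-a_1)$ lie in the cones of \emph{different} facets: if they lay in a common facet cone $F$, the triple equality $\dist(a_1+tv_0,a_2)=\dist(a_1+tv_0,a_3)=t_0$ would force $\lineardist_F(a_3-a_2)=0$, contradicting weak general position. Hence on each linear piece of $\Xi$ meeting its zero set, $\nabla_v\beta_2$ and $\nabla_v\beta_3$ are governed by distinct facet normals of $K$, yielding a full-rank differential. A cell-by-cell PL implicit function argument on the polyhedral refinement from Proposition~\ref{prop:polyhedra-bisectors} then identifies $\Xi^{-1}(0)$, and hence $E$, with a PL $(d{-}2)$-manifold, i.e.\ an open subset of $\R^{d-2}$. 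For non-emptiness I would argue separately: since each $H_S(a_i)$ is non-empty, each Voronoi region $R_i$ contains rays to infinity in the direction of $H_S(a_i)$, and tracking the piecewise-constant assignment $x\mapsto\arg\min_i\dist(x,a_i)$ along a large sphere forces the three pairwise bisectors to meet.

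The main obstacle is carrying out the PL transversality step. The classical implicit function theorem is not available for piecewise linear maps, and one must pass to one-sided derivatives and verify cell by cell that the weak general position hypothesis together with the combinatorics of $\Fan(K)$ forbid $\bisector(a_1,a_2)$ and $\bisector(a_1,a_3)$ from meeting tangentially. This is precisely where the smooth-norm arguments of \cite{IKLM95} and \cite{trisectors}, which invoked the usual implicit function theorem in the low dimensions $d=2,3$, need to be generalized to polyhedral norms in arbitrary dimension.
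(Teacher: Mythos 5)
Your central projection $\Phi\colon \bisector(a_1,a_2,a_3)\to E$ with $E:=\smallSetOf{v\in H_S(a_1)}{\alpha_2(v)=\alpha_3(v)}$ is a valid reduction, and it is in fact the first half of the map $\phi$ constructed in the paper. But from there the two arguments diverge, and yours has genuine gaps.

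First, even granting the PL transversality step, you would only obtain that $E$ is a PL $(d{-}2)$-manifold; the final \enquote{i.e.\ an open subset of $\R^{d-2}$} is a non sequitur, since $(d{-}2)$-manifolds need not embed in $\R^{d-2}$. The theorem asserts the latter, stronger conclusion. The paper obtains it by composing your $\Phi$ with the affine projection $\proj$ that quotients out the $2$-plane $\Pi=\aff(a_1,a_2,a_3)$, landing explicitly in the open set $\bigcap_i\proj(H_S(a_i))\subset\R^{d-2}$. Showing that $\proj|_E$ is injective is exactly the content that your approach is missing, and it is supplied by the $d=2$ uniqueness result (Lemma~\ref{lem:dim2}): every $2$-plane parallel to $\Pi$ meets the triple bisector in at most one point, proved by a homothety argument and weak general position. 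Your transversality route provides no such global statement.

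Second, the PL transversality step itself is not just a technical annoyance to be delegated; it is a real obstacle. You correctly argue that the two facet functionals $\lineardist_{F_2},\lineardist_{F_3}$ at a zero of $\Xi$ are distinct, but the Jacobian rows you need independent involve these functionals \emph{restricted} to the tangent space of the facet of $\partial K$ containing $v_0$. Two distinct functionals on $\R^d$ can have proportional (even equal) restrictions to a hyperplane, precisely when $\lineardist_{F_2}$, $\lineardist_{F_3}$, and the normal of the $\partial K$-facet are linearly dependent — a possibility that weak general position does not obviously rule out. Beyond the rank-2 condition on the interiors of cells, you must also control what happens where the zero set crosses cell boundaries of the polyhedral refinement; a pure cell-by-cell implicit function argument does not by itself yield a manifold there.

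Third, the non-emptiness argument is too vague. \enquote{Tracking $\arg\min_i\dist(\cdot,a_i)$ on a large sphere} in $\R^d$ for $d\geq 3$ does not obviously force the three pairwise bisectors to share a point, and indeed Example~\ref{exm:empty_bisector} shows that the analogous heuristic fails already for four sites. The paper instead reduces non-emptiness to $d=2$ (Lemma~\ref{lem:dim2}, the negative-tangent-cone argument) and then uses a covering-number argument on $\interior\proj(K)$ (Lemma~\ref{lem:domain}) to show $\bigcap_i\proj(H_S(a_i))\neq\emptyset$.

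In short: the slicing-to-$d{=}2$ strategy in the paper buys you, in one stroke, the open embedding in $\R^{d-2}$, injectivity of the projection, and non-emptiness, all of which your transversality approach leaves open.
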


\begin{corollary}
  \label{coro:3points}
  For any three points in weak general position $\bisector(a_1,a_2,a_3)$ is either empty or pure of dimension $d-2$.
  If $d=2$ then $\bisector(a_1,a_2,a_3)$ is either empty or a single point.
\end{corollary}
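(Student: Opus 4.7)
My plan is to study the trisector via central projection from $a_1$, building on Theorem~\ref{thm:central_projection}. For $j=2,3$, that theorem yields a homeomorphism $\psi_{1j}:a_1+H(a_j-a_1)\to\bisector(a_1,a_j)$ of the form $v\mapsto a_1+t_j(v)\,v$, where $v\in\partial K$ is a unit direction and $t_j(v)>0$ is the unique parameter at which the ray from $a_1$ in direction $v$ meets $\bisector(a_1,a_j)$. A point $p\neq a_1$ lies in $\bisector(a_1,a_2,a_3)$ exactly when its direction $v=(p-a_1)/\dist(p,a_1)$ lies in $H_S(a_1)=H(a_2-a_1)\cap H(a_3-a_1)$ and $t_2(v)=t_3(v)$. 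Central projection from $a_1$ therefore identifies the trisector with
\[
  X\ :=\ \{\,v\in H_S(a_1)\ :\ h(v)=0\,\},\qquad h(v):=t_2(v)-t_3(v),
\]
and this identification is a homeomorphism by a properness argument analogous to the one in the proof of Theorem~\ref{thm:central_projection}.

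The heart of the proof is to show that $X$ is a non-empty topological $(d-2)$-manifold that globally embeds as an open subset of $\R^{d-2}$. Since $H_S(a_1)$ is open in $\partial(a_1+K)\cong S^{d-1}$, it is a $(d-1)$-manifold, and $h$ is continuous. At each $v_0\in X$, I would exhibit a continuous arc $\gamma\subset H_S(a_1)$ through $v_0$ along which $h$ is strictly monotone: geometrically, tilting $v_0$ within $\partial K$ toward the direction $a_2-a_1$ should strictly decrease $t_2(v)$ (the bisector is reached sooner) while strictly increasing $t_3(v)$, with the opposite behaviour for the reverse tilt. Given such a transverse direction at every point of $X$, the set $X$ is locally the graph of a continuous function over a $(d-2)$-dimensional transverse slice of $H_S(a_1)$, hence a topological $(d-2)$-manifold. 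A further projection argument — projecting $X$ along the chosen transverse direction onto a $(d-2)$-dimensional slice of $H_S(a_1)$ and checking, via the monotonicity, that the projection is injective on $X$ — upgrades this local statement to a global homeomorphism between $X$ and an open subset of $\R^{d-2}$.

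For non-emptiness I would use an intermediate-value argument on $h$. As $v$ approaches a boundary point of $H(a_2-a_1)$ that still lies in the closure of $H_S(a_1)$, the parameter $t_2(v)$ blows up while $t_3(v)$ stays bounded, forcing $h(v)\to+\infty$; symmetrically, approaching a boundary point of $H(a_3-a_1)$ sends $h(v)\to-\infty$. The hypotheses $H_S(a_2),H_S(a_3)\neq\emptyset$ guarantee that both types of boundary pieces are present in $\overline{H_S(a_1)}$ (otherwise the labeling of facets of $K$ in Lemma~\ref{lem:halfsphere_facets} would leave one of $a_2,a_3$ without a sector). Path-connectedness of $H_S(a_1)$, which follows from the combinatorics of the sector labeling, together with continuity of $h$ then produces a zero of $h$ by IVT.

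The main obstacle will be the local monotonicity step. Unlike the smooth setting of \cite{IKLM95,trisectors}, $t_2$ and $t_3$ are only piecewise linear in $v$, so the implicit function theorem is unavailable and a priori the graphs of $t_2$ and $t_3$ could coincide on a full-dimensional piece of $H_S(a_1)$. Ruling this out relies crucially on the weak general position hypothesis, which forbids facet-parallel alignments of the sites and hence forces the gradients of $t_2$ and $t_3$ to differ in a controlled way; the construction of the monotone tilt direction must be uniform across the cells of the common polyhedral refinement of $H_S(a_1)$ induced by $\psi_{12}$ and $\psi_{13}$, with additional case analysis needed when $v_0$ lies on a lower-dimensional cell of that refinement.
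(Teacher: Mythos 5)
Your proposal is, in effect, an attempt to re-prove (much of) Theorem~\ref{thm:3points} rather than the stated corollary; the paper's own proof of the corollary is a one-liner: if some $H_S(a_i)=\emptyset$, then $\bisector(S)=\emptyset$ by Corollary~\ref{coro:empty_sector}; otherwise Theorem~\ref{thm:3points} gives a homeomorphism to a non-empty open subset of $\R^{d-2}$, hence purity of dimension $d-2$, and for $d=2$ the only non-empty open subset of $\R^0$ is the single point. Your framing — parametrize the trisector via central projection from $a_1$ and study the zero set of $h=t_2-t_3$ on $H_S(a_1)$ — is genuinely different from the paper's route, which reduces to $d=2$ by projecting along the $2$-plane through the sites and then invokes a planar lemma; your approach stays entirely on the halfsphere. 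If the monotonicity step worked, this would be an attractive alternative, avoiding the separate two-dimensional lemma.

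However, there is a real gap precisely at the step you flag as the main obstacle, and your proposed way around it does not hold up. The claim that tilting $v_0$ toward $a_2-a_1$ strictly decreases $t_2$ and strictly increases $t_3$ is an intuition, not an argument, and as stated it is not even the right transverse direction: the correct direction of monotonicity is along the $1$-dimensional fibers of the projection $\proj$ that quotients out the plane $\Pi=\aff(a_1,a_2,a_3)$ (those fibers lie in $\Pi$-parallel slices of $\partial K$, not in the $(a_2-a_1)$-direction alone). Strict monotonicity of $h$ along those fibers is exactly the content of Lemma~\ref{lem:dim2}, and the paper's proof of that lemma requires a genuine geometric argument (homothety of inscribed triangles, and the two tangent-cone translates $K_u,K_v$) that is entirely absent from your sketch. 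Appealing to weak general position to \enquote{force the gradients of $t_2$ and $t_3$ to differ} is not a proof: weak general position only forbids $a_i-a_j$ from being facet-parallel, and the translation from that hypothesis to the pointwise transversality of two piecewise-linear graphs is the whole difficulty. A second, independent problem is your claim that path-connectedness of $H_S(a_1)$ \enquote{follows from the combinatorics of the sector labeling}: this is false in general — Example~\ref{exm:not_connected} exhibits a disconnected sector $H_S(a)$, which is precisely how disconnected three-point bisectors arise. Your intermediate-value argument for non-emptiness therefore breaks down (though non-emptiness is not actually required for the corollary as stated). Finally, the global injectivity step of your projection argument needs a coherent, globally-defined transverse direction and the conclusion that $h$ has at most one zero per fiber; both of these are supplied in the paper by $\proj$ together with Lemma~\ref{lem:dim2}, but not by your local tilting heuristic.
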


We begin with the case $d=2$ of Theorem~\ref{thm:3points}; this occurs in \cite{IKLM95}:
\begin{lemma}
  \label{lem:dim2}
  Let $S=\{a_1,a_2,a_3\}\subset \R^2$ be in weak general position with respect to a polytope $K$.
  If $H_S(a_i)\neq \emptyset$ for the three of them, then the bisector $\bisector(S)$ is a single point.
\end{lemma}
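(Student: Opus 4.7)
The plan is to identify $\bisector(a_1,a_2,a_3)$ with the unique intersection of two pairwise bisectors $B_{ij}:=\bisector(a_i,a_j)$, using the cyclic combinatorics of sectors on $\partial K$ for existence via topological linking and a separate convexity argument for uniqueness.

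\textbf{Setup.} In dimension two the outer normal rotates monotonically around $\partial K$, so every open halfsphere $H(v)$ is a connected open arc, and hence so is each sector $H_S(a_i)=H(a_j-a_i)\cap H(a_k-a_i)$. Under our hypothesis and Lemma~\ref{lem:halfsphere_facets}, the closures of the three sectors cover $\partial K$, so after relabeling they appear in the cyclic order $H_S(a_1),H_S(a_2),H_S(a_3)$. Compactify $\R^2$ to a closed disk $\overline D$ whose boundary circle $S^1_\infty$ of directions at infinity is naturally identified with $\partial K$. By Theorem~\ref{thm:central_projection}, each $B_{ij}$ extends to a Jordan arc in $\overline D$ whose two endpoints on $S^1_\infty$ are the two boundary points of the open arc $H(a_j-a_i)$. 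The asymptotic identity $\dist(x,a_i)-\dist(x,a_j)\to \lineardist_F(a_j)-\lineardist_F(a_i)$ as $x\to\infty$ inside the cone of a facet $F$ identifies these endpoints: one lies at the shared corner $\overline{H_S(a_i)}\cap\overline{H_S(a_j)}$, and the other lies strictly inside $H_S(a_k)$ for $\{i,j,k\}=\{1,2,3\}$.

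\textbf{Existence.} Reading off the six endpoints in cyclic order on $S^1_\infty$ yields the interleaved pattern (corner$_{12}$, interior-endpoint of $B_{13}$ in $H_S(a_2)$, corner$_{23}$, interior-endpoint of $B_{12}$ in $H_S(a_3)$, corner$_{31}$, interior-endpoint of $B_{23}$ in $H_S(a_1)$). In particular the two endpoints of $B_{12}$ separate the two endpoints of $B_{13}$ on the boundary circle. Since two properly embedded Jordan arcs in a disk with interlinking boundary endpoints must cross in the interior, $B_{12}\cap B_{13}\neq\emptyset$, and any such crossing belongs to $\bisector(a_1,a_2,a_3)$.

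\textbf{Uniqueness.} This is the main obstacle. Parametrize $B_{12}$ by $v\in H(a_2-a_1)$ through central projection from $a_1$ as in Theorem~\ref{thm:central_projection}: $\gamma(v)$ is the unique point on the ray $a_1+\R_{\ge 0}v$ that lies on $B_{12}$. Set $\phi(v):=\dist(\gamma(v),a_3)-\dist(\gamma(v),a_1)$. Then $\phi$ is continuous, its zeros are exactly the triple bisector points, and the asymptotic analysis above gives $\phi\to\lineardist_{F_-}(a_1)-\lineardist_{F_-}(a_3)<0$ at the corner end $v_-$ and $\phi\to\lineardist_{F_+}(a_1)-\lineardist_{F_+}(a_3)>0$ at the interior end $v_+$. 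The intermediate value theorem therefore gives existence again.

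To force a \emph{unique} zero, I would first establish that the half-space $V_{13}^1=\{x:\dist(x,a_1)<\dist(x,a_3)\}$ is star-shaped with respect to $a_1$, which follows directly from the triangle inequality and the homogeneity of $\dist$ and parallels the argument behind Theorem~\ref{thm:star}: if $x\in V_{13}^1$ and $y=a_1+t(x-a_1)$ for $t\in[0,1]$, then $\dist(y,a_1)=t\dist(x,a_1)$ while $\dist(y,a_3)\geq \dist(x,a_3)-(1-t)\dist(x,a_1)>\dist(y,a_1)$, so $y\in V_{13}^1$. Consequently each ray from $a_1$ meets $V_{13}^1$ in an initial interval $[0,t_{13}(v))$; writing $t_{12}(v)$ for the analogous radial function to $B_{12}$, the sign of $\phi$ is controlled by the comparison $t_{12}(v)\lessgtr t_{13}(v)$. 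The hard part, which I view as the principal technical step, is to show that $\{v:t_{12}(v)<t_{13}(v)\}$ is a connected subarc of $H(a_2-a_1)$; I would attempt this by a one-parameter convexity argument comparing the two continuous radial functions as $v$ varies along the arc, in the spirit of the single-ray convexity used in the proof of Theorem~\ref{thm:central_projection}.
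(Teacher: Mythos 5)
Your existence argument is a genuinely different route from the paper's, and it is an attractive one.  You compactify $\R^2$ to a disk, identify the boundary circle with $\partial K$ via central projection, and then observe that the two-point bisectors $B_{12}$ and $B_{13}$ extend to Jordan arcs whose four boundary endpoints interleave on $S^1_\infty$; topological linking then forces a crossing.  This is correct as far as it goes (your claim that the other endpoint of $\partial H(a_j-a_i)$ lies strictly inside $H_S(a_k)$ does hold: if it were a corner of two sectors, the corresponding normal would be perpendicular to two independent differences $a_j-a_i$ and $a_\ell-a_i$, forcing it to vanish and hence forcing collinearity of the sites, which is excluded when all three sectors are non-empty).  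The paper instead argues by contradiction: if the three pairwise bisectors are pairwise disjoint, one of them is swallowed entirely by the Voronoi region of the third site, and an explicit tangent-cone construction ($K_u$, $K_v$ in Figure~\ref{tikz:two_boundary_rays}) shows that site would have an empty sector.  Both work; yours is shorter but buries the geometry in the compactification, while the paper's is more hands-on and feeds directly into the picture used elsewhere.

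However, your uniqueness step has a genuine gap, and you acknowledge as much.  You reduce uniqueness of the zero of $\phi(v)=\dist(\gamma(v),a_3)-\dist(\gamma(v),a_1)$ along $B_{12}$ to the claim that $\{v : t_{12}(v)<t_{13}(v)\}$ is a connected subarc of $H(a_2-a_1)$, and then say you ``would attempt this by a one-parameter convexity argument.''  That claim is not a consequence of the single-ray convexity used in Theorem~\ref{thm:central_projection}: there the convexity is of $\alpha\mapsto\dist(a+\alpha v,b)$ for a \emph{fixed} ray, whereas here you must compare two radial functions $t_{12}$ and $t_{13}$ as $v$ itself moves along an arc, which is a two-parameter deformation with no obvious convexity.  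Without that step the proof is incomplete.  The paper settles uniqueness by an entirely self-contained homothety argument: if two distinct points $x\neq y$ were both in $\bisector(S)$, the negative balls $B_x=x-\alpha K$ and $B_y=y-\beta K$ both have $S$ on their boundary; the homothety $\rho:B_x\to B_y$ then places both $S$ and $\rho(S)$ on $\partial B_y$, which forces three of the six points to be collinear, hence two of the original sites to lie on a single edge of the polygon, contradicting weak general position.  I would recommend adopting that argument (or supplying a full proof of your connectedness claim, which I believe will require more than invoking convexity).
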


\begin{proof}
  We first show that $\bisector(S)$ cannot contain two distinct points.
  To seek a contradiction, suppose that there are two points $x \neq y$ in the bisector.
  This means that there exist $\alpha,\beta>0$ such that $B_x := x-\alpha K$ and $B_y := y-\beta K$ satisfy $S\subset \partial B_x \cap \partial B_y$.
  For brevity we call $B_x$ and $B_y$ \emph{negative balls} centered at $x$ and $y$, respectively.
  The two balls are related to one another by a homothety $\rho: B_x \to B_y$.
  Let $a'_i := \rho(a_i)$ and $S'=\{a'_1,a'_2,a'_3\} = \rho(S)$.
  Then, $S\cup S' \subset \partial B_y$, but this is impossible: the vertex sets of two homothetic triangles can only lie in the boundary of a polytope in the plane if three of the points are collinear.
  Three collinear points of $\partial B_y$ necessarily lie on a single edge of $B_y$, and (at least) two of them would come from the same triangle $S$ or $S'$, violating the weak general position of $S$.

  It remains to exclude the case where $\bisector(S)=\emptyset$.
  Suppose that this would hold.
  That is, the two-point bisectors are pairwise disjoint.
  Now the two-point bisectors are homeomorphic to lines by Theorem~\ref{thm:central_projection}.
  So the fact that they do not meet implies that each of them appears either in full or not at all in the Voronoi diagram of $S$.
  But the three of them cannot appear, since then the diagram would have four regions, not three.
  Thus, one of them, say $\bisector(a_1,a_3)$, does not appear at all in $\Vor_S$.
  Consequently, $\bisector(a_1,a_3)$ is contained in the Voronoi region of the third point $a_2$.
  We will show that $H_S(a_2)=\emptyset$, and this yields the desired contradiction.

  To simplify the exposition, we call the line $a_1 a_3$ \enquote{horizontal}.
  Let $u$ and $v$ be the points where the ball $K$ has a horizontal tangent, i.e., parallel to $a_1 a_3$.
  Without loss of generality, $u$ is at the top of $K$, and $v$ is at the bottom; see Figure~\ref{tikz:two_boundary_rays}.
  The points $u$ and $v$ are unique and thus vertices of $K$, by weak general position.
  Consider the negative tangent cones $u+\cone(u-K)$ and $v+\cone(v-K)$ of $K$.
  Call $K_u$ and $K_v$ their respective translations that have $a_1$ and $a_3$ on the boundary, and let $u'$ and $v'$ be the apices of these translated cones.
  Such translations exist and are unique since the two boundary rays of $\cone(u-K)$ point upward, and the boundary rays of $\cone(v-K)$ point downward.

\begin{figure}[th]
\definecolor{uuuuuu}{rgb}{0.26666666666666666,0.26666666666666666,0.26666666666666666}
\begin{tikzpicture}[line cap=round,line join=round,>=triangle 45,x=.7cm,y=0.7cm,scale=0.6]
\clip(-4,-8) rectangle (4.5,4);
\draw [line width=1.2pt] (-3,0) -- (0,-2) -- (3,-1) -- (3,1) -- (-1,3) --cycle;

\draw[->] (-1.5,-1) -- ([xshift=-0.5cm,yshift=-0.75cm] -1.5,-1) ;
\draw[->] (1.5,-1.5) -- ([xshift=0.27cm,yshift=-0.81cm]1.5,-1.5) ;
\draw[->] (3,0) -- ([xshift=0.9cm,yshift=0cm]3,0) ;
\draw[->] (1,2) -- ([xshift=0.4cm,yshift=0.8cm]1,2) ;
\draw[->] (-2,1.5) -- ([xshift=-.75cm,yshift=0.5cm]-2,1.5) ;

\draw [line width=.5pt] (-1.5,3)--(-.5,3);
\draw (-1,3) node[anchor=south,fill=white] {$u$};
\draw [line width=.5pt] (-.5,-2)--(.5,-2);
\draw (0,-2) node[anchor=north,fill=white] {$v$};

\draw (0,0) node[fill=white] {$K$};

\end{tikzpicture}
\qquad
\begin{tikzpicture}[line cap=round,line join=round,>=triangle 45,x=0.5cm,y=0.5cm,scale=0.8]
\clip(-9,-5.5) rectangle (7,6.5);
\draw [line width=1.2pt] (4.5,4) -- (-1.5,8) -- (-7.5,6) -- (-7.5,2) -- (.5,-2) --cycle;
\draw [line width=1.2pt] (6,-2) -- (0,2) -- (-6,0) -- (-6,-4) -- (2,-8) --cycle;

\draw [line width=0.6pt] (-9.5,3) -- (.5,-2) -- (8.5,10);
\draw [line width=0.6pt] (9,-4) -- (0,2) -- (-9,-1);

\draw[->] (1,1.33) -- ([xshift=-0.3cm,yshift=-0.45cm] 1,1.33) ;
\draw[->] (-1.8,1.4) -- ([xshift=0.166cm,yshift=-0.5cm]-1.8,1.4) ;
\draw[->] (-4.5,0.47) -- ([xshift=0.65cm,yshift=0cm]-4.5,0.47) ;
\draw[->] (-2,-0.75) -- ([xshift=0.25cm,yshift=0.5cm]-2,-0.75) ;
\draw[->] (1.2,-1) -- ([xshift=-.45cm,yshift=0.3cm]1.2,-1) ;

\coordinate[label = above:$a_1$] (a1) at (-4.5,0.47);
\coordinate[label = above:$a_2$] (a2) at (-.5,-.5);
\coordinate[label = right:$a_3$] (a3) at (2.2,0.55);
\node at (a1)[circle,fill,inner sep=1.5pt]{};
\node at (a2)[circle,fill,inner sep=1.5pt]{};
\node at (a3)[circle,fill,inner sep=1.5pt]{};

\draw (0,2) node[anchor=south] {$v'$};
\draw (.5,-2) node[anchor=north] {$u'$};

\end{tikzpicture}
\caption{%
  Illustrating the proof of Lemma~\ref{lem:dim2}.
  Left: polytope $K$ with top and bottom vertices, $u$ and $v$, horizontal tangents and facet normals.
  Right: intersection $K_u\cap K_v$ with $a_1$ and $a_3$ on the boundary.
  If $a_2\in K_u\cap K_v$ then all linear functions defined by exterior facet normals of $K$ or, equivalently, interior facet normals of $K_u$ and $K_v$, attain smaller values at $a_1$ or $a_3$ than at $a_2$.}
\label{tikz:two_boundary_rays}
\end{figure}
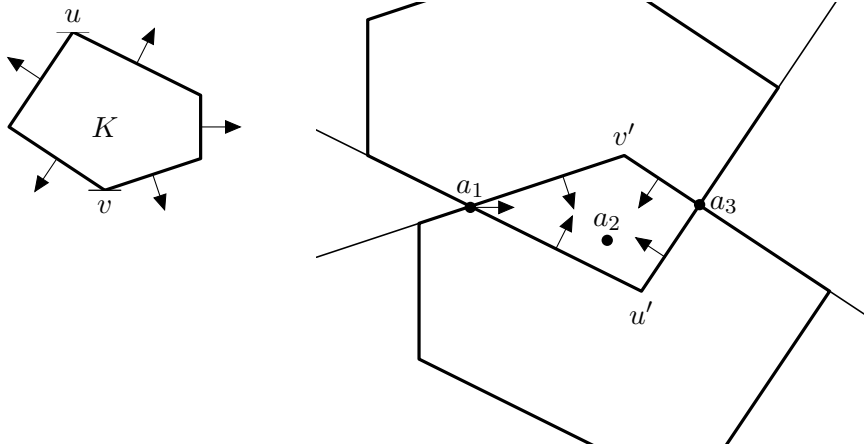

By construction the boundaries of $K_u$ and $K_v$ intersect precisely in $a_1$ and $a_3$.
Now, every negative ball of the form $B_{u',r}:=u'-r(K-u)$ has $K_u$  as its tangent cone at $u'$, and balls like $B_{v',r}:=v'-r(K-v)$ are similar.
Taking $r$ sufficiently large, we can force
\[
K_u \cap K_v \ = \ B_{u',r} \cap B_{v',r} \enspace.
\]
Since $B_{u',r}$ and $B_{v',r}$ are negative balls with $a_1$ and $a_3$ in their boundaries, their centers lie on the bisector $\bisector(a_1,a_3)$.
Our initial assumption that the bisector is contained in the Voronoi region of $a_2$ implies that $a_2$ is in the interior of both balls, i.e., in the interior of both $K_u$ and $K_v$. 
This implies that $H_S(a_2)$ is empty: every facet-defining functional of $K$ takes its minimum on $K_u \cap K_v$ at either $a_1$ or $a_3$.

We conclude that, if $H_S(a_i)\neq\emptyset$ for $i=1,2,3$ then $\bisector(S)\neq\emptyset$, and this finishes the proof.
\end{proof}

For the rest of the proof of Theorem~\ref{thm:3points} let $S=\{a_1,a_2,a_3\}\subset \R^d$ be three points in general position with respect to a convex polytope $K$.
The idea is to reduce the general problem to two dimensions via the following construction.
Let $\proj:\R^d\rightarrow\R^{d-2}$ be the affine projection that quotients out the $2$-plane $\Pi$ containing $S$.
Next we exhibit relevant properties of that map.

\begin{lemma}
\label{lem:map_pi}
With the above notation, let $x\in \interior(\proj(K)) \subset \R^{d-2}$.
Further let $\Pi_x := \proj^{-1}(x)$, which is a $2$-plane parallel to $\Pi$, and  $K_x := K \cap \Pi_x$.
Then $K_x$ is a convex polygon.
\end{lemma}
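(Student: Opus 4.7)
My plan is to argue in two stages. First, $K_x$ is automatically a bounded convex polytope of dimension at most $2$: since $K$ is cut out of $\R^d$ by a finite system of linear inequalities, restricting those inequalities to the affine $2$-plane $\Pi_x$ presents $K_x = K \cap \Pi_x$ as a bounded polyhedron inside $\Pi_x$, and boundedness is inherited from $K$. What remains is to show that this polytope is actually $2$-dimensional, hence a convex polygon.

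For this it suffices to exhibit a point $p \in \Pi_x \cap \interior(K)$, because then any sufficiently small open ball around $p$ in $\R^d$ lies in $K$, and its intersection with $\Pi_x$ is a $2$-dimensional open disk inside $K_x$. The crux is thus to establish the inclusion $\interior(\proj(K)) \subseteq \proj(\interior(K))$. To see this, pick any $q \in \interior(K)$ (which exists because $0 \in \interior(K)$ by hypothesis on $K$) and set $y_0 := \proj(q) \in \interior(\proj(K))$. If $y_0 = x$, take $p = q$. Otherwise, since $\interior(\proj(K))$ is open and contains both $y_0$ and $x$, we may continue the segment from $y_0$ through $x$ slightly further to reach some $y_1 \in \interior(\proj(K))$, so that $x = t y_0 + (1-t) y_1$ for some $t \in (0,1)$. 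Picking any $p_1 \in K$ with $\proj(p_1) = y_1$ and setting $p := t q + (1-t) p_1$, we get $\proj(p) = x$, and $p \in \interior(K)$ by the standard fact that a convex combination with positive weight on an interior point of a convex set lies in the interior.

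No step presents a serious obstacle; the mildest subtlety is the \emph{interior-promotion} inclusion above, which is a direct consequence of convexity. As a byproduct the construction also yields $K_x \cap \interior(K) \neq \emptyset$, which may be useful when the subsequent reduction of Theorem~\ref{thm:3points} to dimension~$2$ is carried out.
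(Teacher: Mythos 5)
Your proof is correct and takes essentially the same approach as the paper. The paper simply invokes the general fact that the fiber of an affine polytope projection over any interior point of the image is a polytope of complementary dimension $\dim(P)-\dim(Q)$, while you supply a self-contained proof of exactly that fact via the inclusion $\interior(\proj(K)) \subseteq \proj(\interior(K))$ and the standard interior-promotion property of convex combinations.
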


\begin{proof}
This is a general property of polytope projections: if $Q$ is the image of a polytope $P$ under an affine map then the fiber of every point $x\in\interior(Q)$ is a polytope of dimension $\dim(P)-\dim(Q)$.
\end{proof}

\begin{lemma}
  With the above notation, again let $x\in\interior(\proj(K))$.
  Further, let  $H_S^{(x)}(a_i)$ denote the sector of $a_i\in \Pi$ with respect to the polygon $K_x$.
  Then
  \[
    H_S^{(x)}(a_i) \ = \ H_S(a_i) \cap K_x \enspace.
  \]
\end{lemma}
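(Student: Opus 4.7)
The plan is to unwind each side to a condition on exterior normal cones at boundary points and then exploit the fact that the normal cone of the slice $K_x$ at such a point is the image of the normal cone of $K$ there under restriction to the direction of $\Pi_x$. First, since $x \in \interior(\proj(K))$, the polygon $K_x$ is full-dimensional in $\Pi_x$, so a point $p \in K_x$ lies in $\partial K_x$ (as a subset of $\Pi_x$) if and only if it lies in $\partial K$: any relative interior point of $K_x$ in $\Pi_x$ is an interior point of $K$ in $\R^d$. This identifies both sides of the claimed equality as subsets of the common set $\partial K_x = \partial K \cap \Pi_x$.

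Second, for $p \in \partial K_x$, I will show that the restriction map $\rho \colon (\R^d)^* \to W^*$, where $W$ denotes the direction of $\Pi_x$, induces a bijection between the exterior normal cones $N_K(p)$ and $N_{K_x}(p)$. Surjectivity follows from the facet description: the facets of $K_x$ through $p$ are precisely the traces on $\Pi_x$ of those facets of $K$ through $p$ whose outer normals do not vanish on $W$, so every generator of $N_{K_x}(p)$ lifts to a generator of $N_K(p)$. Injectivity on $N_K(p)$ is where the hypothesis $x \in \interior(\proj(K))$ enters: if $\tilde\lineardist \in N_K(p)$ vanishes on $W$, then $\tilde\lineardist$ descends to a functional in the exterior normal cone of $\proj(K)$ at $x$, which is trivial since $x$ is an interior point.

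Third, I assemble the equivalence. Since $a_j - a_i \in W$ for all $j \neq i$, the value $\tilde\lineardist(a_j - a_i)$ depends only on $\mu := \rho(\tilde\lineardist)$. By the bijection from step two, \enquote{every nonzero $\mu \in N_{K_x}(p)$ satisfies $\mu(a_j-a_i) > 0$} is equivalent to \enquote{every nonzero $\tilde\lineardist \in N_K(p)$ satisfies $\tilde\lineardist(a_j - a_i) > 0$}. Intersecting over $j \neq i$ translates the condition $p \in H_S^{(x)}(a_i)$ into $p \in H_S(a_i) \cap K_x$, yielding the desired equality. The main obstacle is the cone bijection in step two: while essentially classical for slices of polytopes, it needs some care when $p$ is a vertex or edge of $K_x$ sitting inside a higher-dimensional face of $K$, and the interior assumption on $x$ is precisely what makes the kernel of $\rho$ on $N_K(p)$ trivial.
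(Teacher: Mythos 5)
Your argument is correct in substance but takes a genuinely different route from the paper's. The paper's proof works entirely at the level of facets: by Lemma~\ref{lem:halfsphere_facets} the sector $H_S(a_i)$ is determined by which facets of $K$ it contains; the paper then matches each edge $F'$ of $K_x$ with the facet $F$ of $K$ containing it and observes that $n(F')$ is the projection of $n(F)$ onto $W$, whence $\langle n(F'), a_i - a_j\rangle = \langle n(F), a_i - a_j\rangle$ because $a_i - a_j\in W$. Your proof instead runs pointwise over $\partial K_x = \partial K\cap\Pi_x$ via normal cones. That is more work, but it treats all boundary strata of $K_x$ uniformly and makes the role of the hypothesis $x\in\interior(\proj(K))$ completely explicit (it gives $N_K(p)\cap W^\perp=\{0\}$, and hence $\relint K_x\subset\interior K$), where the paper simply asserts the facet correspondence.

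One correction to step two: $\rho$ is generally \emph{not} a bijection from $N_K(p)$ onto $N_{K_x}(p)$. For instance take $K=[0,1]^3$, $W=\operatorname{span}\{e_1,\,e_2+e_3\}$, and $p=0$; then $-e_2^*$ and $-e_3^*$ are distinct elements of $N_K(p)$ with $\rho(-e_2^*)=\rho(-e_3^*)$, and yet $x=\proj(0)$ is interior and $\ker\rho\cap N_K(p)=\{0\}$. Trivial intersection of $\ker\rho$ with a cone does not force injectivity on the cone, only on its lineality space. Fortunately this does not affect the proof: all that step three actually uses is the pair of facts $\rho(N_K(p))=N_{K_x}(p)$ and $\ker\rho\cap N_K(p)=\{0\}$, which are exactly what you establish. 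So step two should be stated as that pair rather than as a bijection. A second, smaller caveat: the phrase \enquote{precisely the traces \dots whose outer normals do not vanish on $W$} is stronger than needed and not quite right (a facet of $K$ through $p$ with normal not in $W^\perp$ may still trace to a lower-dimensional face of $K_x$); for surjectivity you only need the one inclusion, that each facet of $K_x$ through $p$ is the trace of some facet of $K$ through $p$, which follows from an inequality description of $K$.
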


\begin{proof}
  Note that no facet in $K_x$ is parallel to any $a_i-a_j$ because if it were, the facet of $K$ containing it would be parallel, too.

  Let $F'$ be a facet of $K_x$, and let $F$ be the corresponding facet in $K$. Then, $n(F')\in \Pi_x$, the normal vector to $F'$, is the projection into $\Pi$ of $n(F)$, and,
  \[
    \begin{split}
      F'\in H_S^{(x)}(a_i) \quad  & \iff \quad  \langle n(F'), a_i \rangle > \langle n(F'), a_j \rangle \text{ for } j=1,2,3 \\ & \iff \quad \langle n(F),a_i\rangle > \langle n(F), a_j\rangle \text{ for } j=1,2,3 \\ & \iff \quad F\in H_S(a_i)\enspace.
    \end{split}
  \]
\end{proof}

\begin{lemma} \label{lem:domain}
  Let $S=\{a_1,a_2,a_3\}$ as before. If $H_S(a_i)\neq\emptyset$ for all $i$, then the intersection $\bigcap_{a_i\in S} \proj(H_S(a_i))\subset \R^{d-2}$ is open and not empty.
\end{lemma}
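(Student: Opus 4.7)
The plan is to prove openness and non-emptiness of $\bigcap_{i} \proj(H_S(a_i))$ separately; non-emptiness is the main obstacle.

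\emph{Openness.} I will show that each $\proj(H_S(a_i))$ is itself open in $\R^{d-2}$. First I claim $\proj(H_S(a_i)) \subseteq \interior \proj(K)$: for $p \in H_S(a_i)$ and any nonzero outer normal $\lineardist \in N_K(p)$, the sector condition gives $\lineardist(a_i - a_j) < 0$ for every $j \neq i$, so each direction $a_i - a_j$, being parallel to $\Pi$, lies in the interior of the tangent cone of $K$ at $p$; hence $\Pi_{\proj(p)}$ enters $\interior K$ near $p$, the fiber $K_{\proj(p)}$ is two-dimensional there, and $\proj(p) \in \interior \proj(K)$. Second I claim $\proj(H_S(a_i))$ itself is open: if $y = \proj(p)$ with $p \in H_S(a_i)$, then by openness of $H_S(a_i)$ in $\partial K$ there is a neighborhood $V \subset \partial K$ of $p$ inside $H_S(a_i)$, and by continuous dependence of the slice $\partial K_{y'}$ on $y'$ the fiber $\partial K_{y'}$ still meets $V$ for $y'$ near $y$, so $y' \in \proj(H_S(a_i))$.

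\emph{Non-emptiness.} My strategy is to construct a point $z \in \bisector(a_1, a_2, a_3)$ and then exhibit an associated boundary point of $K$ whose projection lies in $\bigcap_i \proj(H_S(a_i))$. By Theorem~\ref{thm:central_projection}, for each direction $v \in H_S(a_1) = H(a_2-a_1) \cap H(a_3-a_1)$ there are unique $\alpha_{12}(v), \alpha_{13}(v) > 0$ with $a_1 + \alpha_{1j}(v)\, v \in \bisector(a_1, a_j)$. I will show the continuous function $h(v) := \alpha_{12}(v) - \alpha_{13}(v)$ satisfies $h(v) \to +\infty$ as $v \to \partial H(a_2-a_1)$ within $H(a_3-a_1)$ (because $\alpha_{12}$ blows up when the ray becomes asymptotically equidistant from $a_1$ and $a_2$) and symmetrically $h(v) \to -\infty$ at the opposite part of $\partial H_S(a_1)$; an intermediate-value argument then produces a zero $v^*$, giving $z := a_1 + \alpha_{12}(v^*) v^* \in \bisector(a_1, a_2, a_3)$. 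To conclude, set $r := \dist(z, a_i)$ and $k_i := (z-a_i)/r \in \partial K$; since $\proj(a_i) = \proj(\Pi)$ for all $i$, the three $k_i$ share a single projection $y_k$. A direct calculation using the identity that outer normals of the ``negative ball'' $z - rK$ at $a_i$ are exactly $-N_K(k_i)$, together with weak general position, shows $k_i \in H_S(a_i) \cap \partial K_{y_k}$, so $y_k \in \bigcap_i \proj(H_S(a_i))$.

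\emph{Main obstacle.} The technical heart is the intermediate-value step: one must check that the components of $H_S(a_1)$ really realize both $+\infty$ and $-\infty$ limit behaviors of $h$. Pathological decompositions in which every component of $H_S(a_1)$ borders only one of $\partial H(a_2-a_1)$ or $\partial H(a_3-a_1)$ must be ruled out using the non-emptiness of $H_S(a_2)$ and $H_S(a_3)$. Handling this topological interaction between $H_S(a_1)$ and the two pairwise half-spheres, and the corresponding asymptotic behavior of the radial functions $\alpha_{1j}$, is the main difficulty of the proof.
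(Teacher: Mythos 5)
Your openness argument is fine; it fills in the details that the paper's terse "which implies it is open" leaves to the reader, namely that the restriction of $\proj$ to $\partial K \cap \proj^{-1}(\interior\proj(K))$ is an open map, so each $\proj(H_S(a_i))$ is open.

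The non-emptiness argument, however, takes a genuinely different route from the paper and has a real gap. You invert the logical order of the section: the paper uses Lemma~\ref{lem:domain} as an input to Lemma~\ref{lem:homeo}, which in turn gives non-emptiness of $\bisector(a_1,a_2,a_3)$; you instead try to establish $\bisector(a_1,a_2,a_3)\neq\emptyset$ \emph{first} by an intermediate-value argument, and then project a bisector point down. The second half of your argument (given $z\in\bisector(S)$, the central projections $k_i=(z-a_i)/r$ lie in the respective sectors and share a common image under $\proj$) is correct and is essentially the content of Corollary~\ref{coro:empty_sector}. But the first half is where the difficulty lives, and you flag it yourself without resolving it. The obstruction is real: $H_S(a_1)$ can be disconnected (Example~\ref{exm:not_connected}), so a single connected component need not border both $\partial H(a_2-a_1)$ and $\partial H(a_3-a_1)$ and $h$ need not change sign on any component. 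Moreover, at points of $\partial H(a_2-a_1)\cap\partial H(a_3-a_1)$ both $\alpha_{12}$ and $\alpha_{13}$ blow up, so the sign of $h$ near the boundary of a component is not controlled. The paper's argument is precisely designed to sidestep this topological difficulty: it observes that every slice $K_x$ (a polygon) must meet at least two of the three sectors, because no single sector can contain a positively dependent set of facet normals, so the three open sets $\proj(H_S(a_i))$ cover $\interior\proj(K)$ with multiplicity at least two; if the triple intersection were empty, $\interior\proj(K)$ would decompose into two disjoint non-empty open sets, contradicting its connectedness. This covering/connectedness argument does the work that your intermediate-value step cannot do as stated, and I don't see an easy repair of your approach that avoids reproducing something like it.
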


\begin{proof}
  First, observe that an $x\in \partial K$ with $\proj(x)\in \partial(\proj(K))$ cannot be in any of the $H_S(a_i)$: indeed, $x\in \partial K$ implies that there is a normal vector of $K$ at $x$ orthogonal to $\Pi$, hence orthogonal to $a_i-a_j$ for every $a_i,a_j$. As a consequence,
  \[
    \bigcap_{a_i\in S} \proj(H_S(a_i)) \ \subset \ \interior \proj(K) \enspace,
  \]
  which implies it is open.

  For any point $x\in \interior(\proj(K))$, the preimage $\proj^{-1}(x)$ is a polygon, a slice of $K$. This slice has to intersect at least two of the classes of $\mathcal{H}$, because $\mathcal{H}$ is a partition (so at the slice intersects at least one class), but no class can contain a set of facets whose vectors are positively dependent (because each class is an intersection of half-spheres). Then, any point $x\in\interior(K)$ lies in at least two sets $\proj(H_S(a_i))$.

  Thus, the three open sets $\proj(H_S(a_i))$ cover each point of $\interior(\proj(K))$ at least twice.
  At least two of these sets must intersect, say $\proj(H_S(a_1))$ and $\proj(H_S(a_2))$.
  Suppose that $(\proj(H_S(a_1))\cap\proj(H_S(a_2))) \cap \proj(H_S(a_3)) = \emptyset$. Then, $\interior(\proj(K))$ would be disconnected, because it is covered by two disjoint open sets. Since this is not possible, there must be a point in the common intersection of the three $H_S(a_i)$.
\end{proof}

Our next lemma finishes the proof of Theorem~\ref{thm:3points}, and it actually gives more information.

\begin{lemma} \label{lem:homeo}
  The set $\bisector(a_1,a_2,a_3)$ is homeomorphic to $\bigcap_{i=1,2,3} \proj(H_S(a_i))$.
\end{lemma}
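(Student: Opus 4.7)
The plan is to construct an explicit homeomorphism
\[
  \phi:\bisector(a_1,a_2,a_3)\longrightarrow\bigcap_{i=1,2,3}\proj(H_S(a_i))
\]
whose inverse is built by applying the two-dimensional case, Lemma~\ref{lem:dim2}, fiber by fiber. After translating so that $\Pi$ passes through the origin, fix a linear complement $V$ with $\R^d=\Pi\oplus V$, so that $\proj$ restricts to an isomorphism $V\to\R^{d-2}$. For $y\in\bisector(S)$, set
\[
  \phi(y) \ := \ \frac{\proj(y)}{\dist(a_1,y)} \ = \ \proj\!\left(\frac{y-a_i}{\dist(a_i,y)}\right), \quad\text{for any } i\in\{1,2,3\},
\]
where the second equality uses $\proj(a_i)=0$ and the common value $\alpha:=\dist(a_i,y)$. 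The proof of Theorem~\ref{thm:central_projection} applied to each pair $(a_i,a_j)$ places $(y-a_i)/\alpha$ in $H_S(a_i)$, hence $\phi(y)\in\bigcap_i\proj(H_S(a_i))$, and $\phi$ is clearly continuous.

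To invert $\phi$, fix $x\in\bigcap_i\proj(H_S(a_i))$, which lies in $\interior(\proj(K))$ by Lemma~\ref{lem:domain}. Choose $q_x\in\interior(K_x)\subset\Pi_x$ depending continuously on $x$ (for example, the centroid of $K_x$), and put $P_x := K_x-q_x\subset\Pi$; this is a polygon containing the origin in its interior. By the previous lemma, the sector of each $a_i$ with respect to $P_x$ equals $(H_S(a_i)\cap K_x)-q_x$, which is non-empty precisely because $x\in\proj(H_S(a_i))$. Moreover, weak general position of $S$ with respect to $K$ descends to $\Pi$ with respect to $P_x$, since every edge direction of $P_x$ is parallel to a facet of $K$. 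Lemma~\ref{lem:dim2} therefore supplies a unique circumcenter $c_x\in\Pi$ of $\{a_1,a_2,a_3\}$ with respect to $P_x$, with circumradius $r_x>0$, and we set
\[
  \psi(x) \ := \ c_x+r_x q_x \ \in \ \Pi_{r_x x}.
\]
The crux of the verification is a slice identification: for $y\in\Pi_{\alpha x}$ and $\bar y := y-\alpha q_x\in\Pi$, the three-dimensional condition $y-a_i\in\alpha\partial K$ is equivalent, via $\alpha\partial K\cap\Pi_{\alpha x}=\alpha\partial K_x$, to $\bar y-a_i\in\alpha\partial P_x$. Granting this, $\psi(x)$ lies in $\bisector(S)$ at common distance $r_x$, so $\phi\circ\psi$ is the identity; conversely, any $y\in\bisector(S)$ with $\phi(y)=x$ and $\alpha=\dist(a_1,y)$ produces a planar circumcenter $\bar y=c_x$ with radius $\alpha=r_x$, forcing $y=\psi(x)$.

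Continuity of $\psi$ follows once we know that $q_x$, $P_x$, and hence $c_x$ and $r_x$ depend continuously on $x$; the last continuity is standard from the fact that Lemma~\ref{lem:dim2} produces a \emph{unique} output for polygons varying continuously in Hausdorff distance. The main technical obstacle is the slice identification above: the translation by $-\alpha q_x$ depends on both $\alpha$ and $x$, which is the geometric reason why the codomain $\bigcap_i\proj(H_S(a_i))$ appears after a projective-style normalization by the circumradius, rather than as a plain projection of the bisector. This is the only place in the argument that exploits the splitting $\R^d=\Pi\oplus V$ and the continuous selection $q_x\in\interior(K_x)$.
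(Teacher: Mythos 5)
Your proof is correct and follows essentially the same route as the paper: $\phi$ is the normalized central projection and $\psi$ is built fiber by fiber via a continuous section of $\proj$ and the unique planar circumcenter supplied by Lemma~\ref{lem:dim2}. Your explicit splitting $\R^d=\Pi\oplus V$ together with the slice identification makes the two-sided-inverse check a bit more transparent, and your formula $\psi(x)=c_x+r_xq_x$ in fact corrects a small off-by-one in the paper's displayed expression $p=r+\dist_{K_x}(w_i,r)\gamma(x)$, which should read $p=r+\bigl(\dist_{K_x}(w_i,r)-1\bigr)\gamma(x)$.
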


\begin{proof}
Consider the  map
\[
\phi: \bisector(a_1,a_2,a_3)\longrightarrow \bigcap_{i=1,2,3} \proj(H_S(a_i))
\]
defined as follows:
Let $p\in \bisector(S)$, and let $v_i\in H_S(a_i)$ be the central projection from $p$ to $a_i +\partial  K$, for each $i=1,2,3$.
Note that each $v_i$ lies in the corresponding $H_S(a_i)$, by Theorem~\ref{thm:central_projection}.
Further, the three points $v_1$, $v_2$ and $v_3$ lie in a plane parallel to $\Pi$. In particular, $\proj(v_1)=\proj(v_2)=\proj(v_3)$ lies in $\bigcap_{i=1,2,3} \proj(H_S(a_i))$ and we define
\[
\phi(p) \ := \ \proj(v_i) \enspace.
\]

To show that $\phi$ is a homeomorphism, let us construct its inverse $\psi$. Let $\gamma: \proj(\interior (K)) \to \interior(K)$ be a continuous section of $\proj$ in $K$. For example, but not necessarily, for each $2$-plane $\Pi'$ parallel to $\Pi$ and intersecting $K$ let $\gamma(\proj(\Pi'))$ be the centroid of $\Pi'\cap K$.

Now, let $x\in \bigcap_{i=1,2,3} \proj(H_S(a_i))$. Let $\Pi_x= \proj^{-1}(x)$ and let $w_i= \gamma(x) +a_i$, for each $i=1,2,3$. In the $2$-plane $\Pi_x$ we have a set $S_x=\{w_1,w_2,w_3\}$ and a unit ball $K\cap \Pi_x$.
Lemma~\ref{lem:map_pi} gives that $H_{S_x}(w_i) = H_S(a_i) \cap \Pi_x$. By choice of $x$ we have
 $\bigcap _i H_{S_x}(w_i) \neq \emptyset$ and Lemma~\ref{lem:dim2} guarantees that the bisector of $S_x$ is a unique point $r\in \Pi_x$; see Figure~\ref{fig:central_projection}.

 Let $v_i$ be the central projection of $r$ to $w_i+\partial K_x$. Observe that $|w_ir|/|w_i v_i|$ is independent of $i$ since it equals $\dist_{K_x}(w_i,r)$, where $\dist_{K_x}$ denotes the distance induced by $K_x$ in $\R^2$, and $r$ is in $\bisector_{K_X}(w_1,w_2,w_3)$. Since $w_i-a_i = \gamma(x)$ is also independent of $i$, the three rays $a_iv_i$ meet at the point
 \[
 p \:= \ r + \dist_{K_x}(w_i,r) \gamma(x) \enspace,
 \]
 and $\dist_K(a_i,p) = \dist_{K_X}(w_i,r)$. Thus, $p\in\bisector(a_1,a_2,a_3)$ and we define $p$ to be $\psi(x)$.

 This gives us a well-defined map
 \[
   \psi: \bigcap_{i=1,2,3} \proj(H_S(a_i)) \longrightarrow \bisector(a_1,a_2,a_3) \enspace ,
 \]
and by construction $\psi$ is the inverse of $\phi$ both ways.
The map $\gamma$ is continuous, as the bisector of three points in the plane depends continuously on a continuous deformation of the unit ball.
So $\psi$ is continuous.
Thus, $\phi$ and $\psi$ are homeomorphisms between $\bisector(a_1,a_2,a_3)$ and $\bigcap_{i=1,2,3} \proj(H_S(a_i))$.
Since the latter is not empty and open by Lemma~\ref{lem:domain}, the former is homeomorphic to a non-empty open subset of $\R^{d-2}$.
\end{proof}

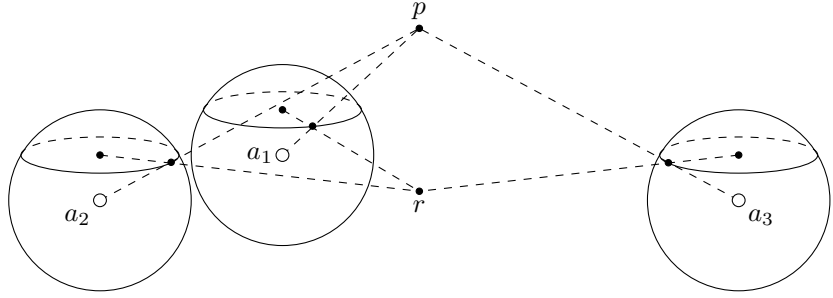
\begin{figure}\centering
  \begin{tikzpicture} [scale=1.2]
  \small

  \draw (0,0) circle (1cm);
  \draw (-0.866,0.5) arc (180:360:0.866 and 0.2);
  \draw[dashed] (0.866,0.5) arc (0:180:0.866 and 0.2);

  \draw (-2,-.5) circle (1cm);
  \draw (-2.866,0) arc (180:360:0.866 and 0.2);
  \draw[dashed] (0.866-2,0) arc (0:180:0.866 and 0.2);

  \draw (5,-.5) circle (1cm);
  \draw (-0.866+5,0) arc (180:360:0.866 and 0.2);
  \draw[dashed] (0.866+5,0) arc (0:180:0.866 and 0.2);

  \draw[dashed] (0,0) -- (1.5, 1.4);
  \draw[dashed](-2,-.5) -- (1.5, 1.4);
  \draw[dashed] (5,-.5) -- (1.5, 1.4);

  \draw[dashed] (0,0.5) -- (1.5, -.4);
  \draw[dashed](-2,0) -- (1.5, -.4);
  \draw[dashed] (5,0) -- (1.5, -.4);

  \draw[fill=black] (-1.22, -0.08) circle(1pt);
  \draw[fill=black] (0.33, 0.32) circle(1pt);
  \draw[fill=black] (4.23,-0.085) circle(1pt);

  \draw[fill=black] (1.5,1.4) circle(1pt) node [anchor=south] {$p$};
  \draw[fill=black] (1.5, -.4) circle(1pt) node [anchor=north] {$r$};

  \draw[fill=white, draw=black] (0,0) circle (2pt);
  \draw[fill=black] (0,0.5) circle (1pt);
  \draw(0,0) node[anchor=east] {$a_1$};

  \draw[fill=white, draw=black] (-2,-.5) circle (2pt);
  \draw[fill=black] (-2,0) circle (1pt);
  \draw(-2,-.5) node[anchor=north east] {$a_2$};

  \draw[fill=white, draw=black] (5, -.5) circle (2pt);
  \draw[fill=black] (5,0) circle (1pt);
  \draw(5,-.5) node[anchor=north west] {$a_3$};

  \end{tikzpicture}
  \caption{Construction of $\psi$ in the proof of Theorem~\ref{thm:3points} }
  \label{fig:central_projection}
\end{figure}


Our proof of Theorem~\ref{thm:3points} closely follows the proof of the $3$-dimensional case  in \cite{trisectors}. There, it is additionally shown that the number of connected components of $\bisector(a_1,a_2,a_3)$ equals the total number of connected components of the three sectors $H_S(a_1)$ minus two. We can extend this formula to higher dimension and to higher Betti numbers (over any field):

\begin{theorem}
\label{thm:3points_topo}
Let $a_1,a_2,a_3\in \R^d$ be three points in weak general position with respect to a polytope $K$ and assume that $H_S(a_i)\neq \emptyset$ for all three.
Then, for $j\in\{ 0,\dots, d-3 \}$, we have
\[
\tilde \beta_j(\bisector_K(a_1,a_2,a_3)) \ = \ \sum_{i=1}^3 \tilde \beta_j(H_S(a_i)) \enspace.
\]
\end{theorem}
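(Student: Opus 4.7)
The strategy is to reduce the computation to an iterated Mayer--Vietoris calculation on the cover of $M:=\interior(\proj(K))$ by the three open sets $U_i:=\proj(H_S(a_i))$, combined with a fiberwise deformation retraction comparing each sector $H_S(a_i)$ with its projection $U_i$. Throughout, recall from Lemma~\ref{lem:homeo} that $\bisector(a_1,a_2,a_3)$ is homeomorphic to $U_1\cap U_2\cap U_3$.

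First, I would prove that the restricted projection $\proj\colon H_S(a_i)\to U_i$ is a homotopy equivalence, so that $\tilde\beta_j(H_S(a_i))=\tilde\beta_j(U_i)$ for every $j$. Its fiber over $x\in U_i$ is the planar sector $H_S(a_i)\cap K_x = H_S^{(x)}(a_i)$, which, as in the proof of Lemma~\ref{lem:homeo}, equals $H_{K_x}(w_j-w_i)\cap H_{K_x}(w_k-w_i)$ inside $\partial K_x\cong S^1$. Each factor is a connected open half-arc and, in the normal-angle parametrization of $\partial K_x$, occupies an open half of the circle; the intersection of two such half-circles in $S^1$ is therefore either empty or a single open arc. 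Since $x\in U_i$ the fiber is a nonempty open arc, hence contractible. Taking the arc-length midpoint of this fiber arc defines a continuous section $s\colon U_i\to H_S(a_i)$, and sliding every point of $H_S(a_i)$ along its fiber arc to this midpoint gives a deformation retraction onto $s(U_i)\cong U_i$.

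Second, I would assemble the reduced homology of $U_1\cap U_2\cap U_3$ with two applications of Mayer--Vietoris inside $M$. As an open convex subset of $\R^{d-2}$, the space $M$ is contractible, so $\tilde H_\ast(M)=0$. Lemma~\ref{lem:domain} tells us that every point of $M$ lies in at least two of $U_1,U_2,U_3$, which forces $U_i\cup U_j=M$ for each pair and, consequently, $U_1\cup (U_2\cap U_3)=M$. For any open cover $\{A,B\}$ of a contractible space the reduced Mayer--Vietoris sequence collapses to $\tilde H_n(A\cap B)\cong\tilde H_n(A)\oplus\tilde H_n(B)$ for every $n$. Applying this first to $(A,B)=(U_2,U_3)$ and then to $(A,B)=(U_1,U_2\cap U_3)$ yields
\begin{equation*}
  \tilde H_n(U_1\cap U_2\cap U_3)\;\cong\;\tilde H_n(U_1)\oplus\tilde H_n(U_2)\oplus\tilde H_n(U_3),
\end{equation*}
and combining with the first step and Lemma~\ref{lem:homeo} gives $\tilde\beta_j(\bisector(a_1,a_2,a_3))=\sum_{i=1}^{3}\tilde\beta_j(H_S(a_i))$ for every $j$, and in particular in the asserted range $0\le j\le d-3$.

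The main obstacle is the continuity of the deformation retraction in the first step. Across codimension-one walls of $U_i$ the combinatorial type of the polygon $K_x$ changes and the endpoints of the fiber arc jump between facets of $\partial K_x$; one must verify that both the arc-length midpoint section and the fiber sliding homotopy remain continuous through these transitions. Weak general position, the combinatorial description of halfspheres in Lemma~\ref{lem:halfsphere_facets}, and the continuity arguments used in the proof of Theorem~\ref{thm:central_projection} should together provide the control needed to glue the local retractions into a global one.
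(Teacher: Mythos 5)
Your proof is correct, but it takes a genuinely different route from the paper. Both proofs begin with the same homotopy equivalence $H_S(a_i)\simeq \proj(H_S(a_i))$, established through contractibility of the fibers of $\proj$; you construct an explicit fiberwise retraction while the paper simply asserts the equivalence, so the continuity issue you flag at the end is present in both accounts and is not a defect unique to your argument. After that the two proofs diverge. The paper passes to the one-point compactification $\mathcal S$ of $\interior(\proj(K))$, applies Alexander duality to trade the $U_i$ and their triple intersection for their complements $C_i$, and observes that $C_1\cup C_2\cup C_3$ is a wedge (because each point of the interior lies in at least two $U_i$). You instead observe that the same covering fact gives $U_2\cup U_3 = M$ and $U_1\cup(U_2\cap U_3)=M$, and then run the reduced Mayer--Vietoris sequence twice inside the contractible set $M$ to obtain $\tilde H_n(U_1\cap U_2\cap U_3)\cong\bigoplus_i\tilde H_n(U_i)$ directly. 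Your argument is more elementary: it avoids Alexander duality entirely, needs no local contractibility hypotheses or compactifications, and gives the isomorphism for all $n$ uniformly, whereas the duality route is naturally tied to the index range $0\le j\le d-3$. The paper's approach in turn offers a nice geometric picture (the complements wedge at infinity). Both are valid; yours is arguably the cleaner homological bookkeeping.

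One small simplification worth noting: the fiberwise ``arc-length midpoint'' section and the sliding homotopy can be replaced by an appeal to the fact that a proper open surjection with contractible fibers between finite-dimensional locally compact ANRs is a homotopy equivalence, which sidesteps the continuity verification across combinatorial walls. Alternatively, you can parametrize each open arc fiber by normal angle rather than arc length; the midpoint in the angle parametrization depends only on the two bounding facet normals, which are locally constant, so continuity is immediate on the open cells of $U_i$ and extends across walls because the limiting normals agree. Either fix closes the gap you correctly identified.
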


\begin{proof}
Consider the same projection $\proj:\R^d \to \R^{d-1}$ as before. Observe that, since $\Pi_x \cap H_S(a_i)$ is empty or contractible for every plane $\Pi_x$ parallel to $\proj$, we have
\[
H_S(a_i) \ \simeq \ \pi(H_S(a_i)) \qquad \text{for } i=1,2,3 \enspace.
\]

We now apply Alexander duality in the one-point compactification $\mathcal S$ of $\interior(\proj(K))$, which is a sphere of dimension $d-2$. Alexander duality says that if $U$ is an open and locally contractible subset of $\mathcal S$ then
\[
\tilde \beta_j(U) \ = \ \tilde \beta_{d-3-j} (\mathcal S \setminus U) \enspace.
\]
In particular, if we let $C_i = \mathcal S \setminus \proj(H_S(a_i))$ we have
\[
 \sum_{i=1}^3 \tilde \beta_j(\proj( H_S(a_i))) \ = \ \sum_{i=1}^3 \tilde \beta_{d-3-j}(C_i) \enspace,
 \]
 and
\[
 \tilde \beta_j\left(\bigcap_{i=1}^3 \proj (H_S(a_i))\right) \ = \ \tilde \beta_{d-3-j}\left(\bigcup_{i=1}^3 C_i\right) \enspace.
\]
Yet $C_1$, $C_2$ and $C_3$ are pairwise disjoint except for the \enquote{point at infinity} of $\mathcal S$, because each point of $\interior(\proj(K)) $ lies in at least two of the sets $\proj(H_S(a_i))$.
Thus, $\bigcup_{i=1}^3 C_i$ is the topological wedge (or one-point sum) of $C_1$, $C_2$ and $C_3$, which makes the right-hand sides of the two last equations coincide.
\end{proof}

\begin{remark}
One may ask how complicated the Betti numbers $\tilde \beta_j(H_S(a_i))$ in Theorem~\ref{thm:3points_topo} can be.
Equivalently, how complicated the topology of three point bisectors can be.
Such a bisector is $(d-2)$-dimensional, so the relevant Betti numbers are $\tilde \beta_0, \dots, \tilde \beta_{d-2}$.
The last one, $\tilde \beta_{d-2}$, must vanish as
$\tilde \beta_j(H_S(a_i)) = \tilde \beta_j(\proj(H_S(a_i)))$, and the latter is an open subset of $\R^{d-2}$.
But $\tilde \beta_{d-3}$ can be non-zero, as the following example shows.
\end{remark}

\begin{example}
  \label{exm:not_connected}
  Let $K=\ball{3}$ be the tropical unit ball, i.e., $d=3$.
  The three sites $a=(0,0,4,4)$, $b=(-3,0,2,0)$ and $c=(0,-3,0,2)$ lie in weak general position in $\torus4$.
  We describe each sector in terms of the facets whose relative interiors lie in that sector.
  The facet $F_{ij}$ is the one at which coordinate $i$ is minimized and $j$ is maximized.
  We obtain
  \[
    \begin{aligned}
      & H(a)=(F_{14},F_{23}) \,,\quad
      H(b)=(F_{12},F_{13},F_{32},F_{42},F_{43}) \,, \\
      &H(c)=(F_{21},F_{24},F_{31},F_{34},F_{41}) \enspace .
    \end{aligned}
  \]
  The sector $H(a)$ is disconnected; i.e., $\tilde \beta_{0}(H(a))$ is non-zero. 
  By Theorem~\ref{thm:3points_topo} $\bisector(a,b,c)$ is disconnected, too.
\end{example}

\begin{remark}\label{rem:convex-body}
The results in this section and in Section~\ref{sec:sectors} involve only \emph{weak} general position.
While we keep assuming that $K$ is a polytope for simplicity, these results generalize to an arbitrary convex body $K$.
In that case $S$ is in weak general position if the boundary $\partial K$ contains no segment parallel to a difference $a-b$ with $a,b\in S$.
\end{remark}

\section{Classification of tropical bisectors of two points}
\label{sec:2points}

\subsection{Tropical bisectors and tropical hypersurfaces}

In the classical case the bisector of two points is a degenerate quadric, namely the affine hyperplane perpendicular to the connecting line segment and which runs through the midpoint.
The tropical analog is more interesting.

\begin{proposition}\label{prop:hypersurface}
  Let $a,b\in\torus{d+1}$ be in weak general position.
  Then the homogeneous $\max$-tropical Laurent polynomial
  \begin{equation}\label{eq:polynomial}
    \bisectorpoly(a,b) \ = \ \max\left(\max_{i,j\in[d+1]}(x_i-a_i-x_j+a_j) , \max_{k,\ell\in[d+1]}(x_k-b_k-x_\ell+b_\ell)\right)
  \end{equation}
  vanishes on the bisector $\bisector(a,b)$.
  That is, the set $\bisector(a,b)$ is contained in a $\max$-tropical hypersurface of degree $d+1$.
\end{proposition}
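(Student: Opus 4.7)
The plan is to interpret $\bisectorpoly(a,b)$ as a tropical Laurent polynomial whose support consists of the $d(d+1)$ monomials $x_i-x_j$ with $i\neq j$, and then to show that every $x\in\bisector(a,b)$ lies in its tropical hypersurface, i.e.\ that the maximum in $\bisectorpoly(x)$ is attained by at least two distinct monomials. The degree-$(d{+}1)$ claim is then a bookkeeping step at the end.

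The starting point is~\eqref{eq:dist}, which identifies the two inner maxima in the definition of $\bisectorpoly$ with $\dist(x,a)$ and $\dist(x,b)$ respectively, so that $\bisectorpoly(x)=\max\bigl(\dist(x,a),\dist(x,b)\bigr)$; on the bisector both terms coincide with some common value $\alpha$. Collecting like monomials allows me to rewrite
\[
\bisectorpoly(x) \ = \ \max_{i\neq j}\bigl(c_{ij}+x_i-x_j\bigr),\qquad c_{ij} \ := \ \max(a_j-a_i,\,b_j-b_i),
\]
so that the tropical hypersurface of $\bisectorpoly$ is precisely the locus where the maximum is attained by two distinct index pairs.

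The crux is a short case analysis: for $x\in\bisector(a,b)$, pick $(i_0,j_0)$ realizing $\dist(x,a)=\alpha$ and $(k_0,\ell_0)$ realizing $\dist(x,b)=\alpha$. Since the $b$-side value at $(i_0,j_0)$ is at most $\alpha$, we get $b_{j_0}-b_{i_0}\leq a_{j_0}-a_{i_0}$; weak general position strengthens this to a strict inequality (otherwise $a-b$ would lie in a hyperplane parallel to the facet $x_{i_0}=x_{j_0}$ of $\balld$), so $c_{i_0j_0}=a_{j_0}-a_{i_0}$ and the monomial $(i_0,j_0)$ attains the maximum of $\bisectorpoly(x)$; the symmetric argument handles $(k_0,\ell_0)$. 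The same assumption rules out $(i_0,j_0)=(k_0,\ell_0)$, since equality there would force $a_{j_0}-a_{i_0}=b_{j_0}-b_{i_0}$. Finally, multiplying $\bisectorpoly$ tropically by the monomial $x_1+\cdots+x_{d+1}$ leaves the hypersurface unchanged while turning each $x_i-x_j$ into $2x_i+\sum_{k\neq i,j}x_k$, whose exponents sum to $d+1$, giving the degree claim. The only step that genuinely uses the hypothesis is the exclusion $(i_0,j_0)\neq(k_0,\ell_0)$, and I expect that to be the one point requiring care; everything else is straightforward.
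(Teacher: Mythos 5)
Your proof is correct and takes essentially the same approach as the paper: both arguments rest on weak general position forcing $a_j-a_i\neq b_j-b_i$, which the paper phrases as a ``no duplicate terms'' check and you phrase as the exclusion $(i_0,j_0)\neq(k_0,\ell_0)$; the degree computation is identical. Your reorganization via the collected coefficients $c_{ij}=\max(a_j-a_i,b_j-b_i)$ makes the ``maximum attained twice'' step a bit more explicit than the paper's terse closing sentence, but the idea is the same.
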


\begin{figure}[th]
\definecolor{uuuuuu}{rgb}{0.26666666666666666,0.26666666666666666,0.26666666666666666}
\begin{tikzpicture}[line cap=round,line join=round,>=triangle 45,x=0.6cm,y=0.6cm,scale=0.7]
\clip(-2.5,-4.5) rectangle (5.5,5.5);
\draw [line width=1.2pt,dotted] (3.,-4.5) -- (3.,5.5);
\draw [line width=1.2pt,dotted,domain=-2.5:5.5] plot(\x,{(-0.--1.5*\x)/-2.598076211353316});
\draw [line width=1.2pt,dotted] (0.,-4.5) -- (0.,5.5);
\draw [line width=1.2pt,dotted,domain=-2.5:5.5] plot(\x,{(-4.494191659338056--1.5*\x)/2.598076211353316});
\draw [line width=1.2pt,dotted,domain=-2.5:5.5] plot(\x,{(-4.505808340661944--1.5*\x)/-2.598076211353316});
\draw [line width=1.2pt,dotted,domain=-2.5:5.5] plot(\x,{(-0.-0.5*\x)/-0.8660254037844385});
\draw [line width=2.pt] (1.5,-0.863789772421293)-- (1.5,0.8660254037844386);
\draw [line width=2.pt] (1.5,0.8660254037844386)-- (1.4980638864460185,0.8693788508291571);
\draw [line width=2.pt] (1.5,-0.863789772421293)-- (1.5019361135539813,-0.8671432194660114);
\draw [line width=2.pt] (1.4980638864460185,0.8693788508291571) -- (1.4980638864460185,5.5);
\draw [line width=2.pt] (1.5019361135539813,-0.8671432194660114) -- (1.5019361135539813,-4.5);
\draw [line width=2.pt,color=aqaqaq,domain=1.5:5.5] plot(\x,{(-0.--1.5*\x)/-2.598076211353316});
\draw [line width=2.pt,color=aqaqaq,domain=-2.5:1.5] plot(\x,{(-4.494191659338056--1.5*\x)/2.598076211353316});
\draw [line width=2.pt,color=aqaqaq,domain=-2.5:1.5] plot(\x,{(-4.505808340661944--1.5*\x)/-2.598076211353316});
\draw [line width=2.pt,color=aqaqaq,domain=1.5:5.5] plot(\x,{(-0.-0.5*\x)/-0.8660254037844385});
\begin{scriptsize}
\draw (-0.6869338062497061,0.0971017099450179) node[fill=white] {$a$};
\draw [fill=white,draw=black] (0.,0.) circle (2.5pt);
\draw (3.6136617561285123,0.07602035914904627) node[fill=white] {$b$};
\draw [fill=white,draw=black] (3.,0.002235631363145546) circle (2.5pt);
\draw [fill=uuuuuu] (1.5,0.8660254037844386) circle (1.0pt);
\draw [fill=uuuuuu] (1.5,-0.863789772421293) circle (1.0pt);
\draw [fill=uuuuuu] (1.4980638864460185,0.8693788508291571) circle (1.0pt);
\draw [fill=uuuuuu] (1.5019361135539813,-0.8671432194660114) circle (1.0pt);
\end{scriptsize}
\end{tikzpicture}
\qquad
\begin{tikzpicture}[line cap=round,line join=round,>=triangle 45,x=0.6cm,y=0.6cm,scale=0.7]
\clip(-2.5,-4.5) rectangle (5.5,5.5);
\draw [line width=1.2pt,dotted] (3.,-4.5) -- (3.,5.5);
\draw [line width=1.2pt,dotted,domain=-2.5:5.5] plot(\x,{(-0.--1.5*\x)/-2.598076211353316});
\draw [line width=1.2pt,dotted] (0.,-4.5) -- (0.,5.5);
\draw [line width=1.2pt,dotted,domain=-2.5:5.5] plot(\x,{(-1.8651857710400623--1.5*\x)/2.598076211353316});
\draw [line width=1.2pt,dotted,domain=-2.5:5.5] plot(\x,{(-7.134814228959938--1.5*\x)/-2.598076211353316});
\draw [line width=1.2pt,dotted,domain=-2.5:5.5] plot(\x,{(-0.-0.5*\x)/-0.8660254037844385});
\draw [line width=2.pt] (1.5,0.14811506578534553)-- (1.5,0.8660254037844386);
\draw [line width=2.pt] (1.5,0.8660254037844386)-- (0.6217285903466876,2.3872361081391147);
\draw [line width=2.pt] (1.5,0.14811506578534553)-- (2.3782714096533124,-1.3730956385693307);
\draw [line width=2.pt] (0.6217285903466876,2.3872361081391147) -- (0.6217285903466876,5.5);
\draw [line width=2.pt] (2.3782714096533124,-1.3730956385693307) -- (2.3782714096533124,-4.5);

\draw [line width=2.pt,color=aqaqaq,domain=2.3782714096533124:5.5] plot(\x,{(-0.--1.5*\x)/-2.598076211353316});
\draw [line width=2.pt,color=aqaqaq,domain=-2.5:1.5] plot(\x,{(-1.8651857710400623--1.5*\x)/2.598076211353316});
\draw [line width=2.pt,color=aqaqaq,domain=-2.5:0.6217285903466876] plot(\x,{(-7.134814228959938--1.5*\x)/-2.598076211353316});
\draw [line width=2.pt,color=aqaqaq,domain=1.5:5.5] plot(\x,{(-0.-0.5*\x)/-0.8660254037844385});
\begin{scriptsize}
\draw (-0.6869338062497061,0.0971017099450179) node[fill=white] {$a$};
\draw [fill=white,draw=black] (0.,0.) circle (2.5pt);
\draw (3.6136617561285123,1.087925197355685) node[fill=white] {$b'$};
\draw [fill=white,draw=black] (3.,1.0141404695697842) circle (2.5pt);
\draw [fill=uuuuuu] (1.5,0.8660254037844386) circle (1.0pt);
\draw [fill=uuuuuu] (1.5,0.14811506578534553) circle (1.0pt);
\draw [fill=uuuuuu] (0.6217285903466876,2.3872361081391147) circle (1.0pt);
\draw [fill=uuuuuu] (2.3782714096533124,-1.3730956385693307) circle (1.0pt);
\end{scriptsize}
\end{tikzpicture}
\qquad
\begin{tikzpicture}[line cap=round,line join=round,>=triangle 45,x=0.6cm,y=0.6cm,scale=0.7]
\clip(-2.5,-4.5) rectangle (5.5,5.5);
\fill[line width=0.pt,fill=black,fill]
(-5.775621243975188,6.797513684148096) -- (9.910889429950332E-4,3.4623849987336675) -- (9.910889429950332E-4,9.889389154673843) -- (-4.987529368627924,9.910470505469815) -- cycle;
\fill[line width=0.pt,fill=black,fill]
(2.9990089110570053,-1.7314786021008486) -- (2.9990089110570053,-8.198409828269822) -- (7.028840585075921,-9.400046823640205) -- (8.660691645410237,-5.000252652845942) -- cycle;
\draw [line width=1.2pt,dotted] (3.,-4.5) -- (3.,5.5);
\draw [line width=1.2pt,dotted,domain=-2.5:5.5] plot(\x,{(-0.--1.5*\x)/-2.598076211353316});
\draw [line width=1.2pt,dotted] (0.,-4.5) -- (0.,5.5);
\draw [line width=2.pt,color=aqaqaq,domain=-2.5:5.5] plot(\x,{(-0.0029732668289836894--1.5*\x)/2.598076211353316});
\draw [line width=1.2pt,dotted,domain=-2.5:5.5] plot(\x,{(-8.997026733171015--1.5*\x)/-2.598076211353316});
\draw [line width=2.pt] (1.5,0.864880992848381)-- (1.5,0.8660254037844386);
\draw [line width=2.pt] (1.5,0.8660254037844386)-- (9.910889429950332E-4,3.4623849987336675);
\draw [line width=2.pt] (1.5,0.864880992848381)-- (2.9990089110570053,-1.7314786021008486);
\draw [line width=2.pt] (9.910889429950332E-4,3.4623849987336675) -- (9.910889429950332E-4,5.5);
\draw [line width=2.pt] (2.9990089110570053,-1.7314786021008486) -- (2.9990089110570053,-4.5);
\draw [line width=2.pt,domain=-2.5:9.910889429950332E-4] plot(\x,{(-6.264961132960372--1.0445052546963396*\x)/-1.809136169906729});
\draw [line width=2.pt,domain=2.9990089110570053:5.5] plot(\x,{(-0.-0.6852820682162759*\x)/1.1869433596664716});
\begin{scriptsize}
\draw (-0.686933806249707,0.0971017099450179) node [fill=white] {$a$};
\draw [fill=white,draw=black] (0.,0.) circle (2.5pt);
\draw (3.71366175612851,1.8046911244187207) node[fill=white] {$b''$};
\draw [fill=white,draw=black] (3.,1.7309063966328198) circle (2.5pt);
\draw [fill=uuuuuu] (1.5,0.8660254037844386) circle (1.0pt);
\draw [fill=uuuuuu] (1.5,0.864880992848381) circle (1.0pt);
\draw [fill=uuuuuu] (9.910889429950332E-4,3.4623849987336675) circle (1.0pt);
\draw [fill=uuuuuu] (2.9990089110570053,-1.7314786021008486) circle (1.0pt);
\end{scriptsize}
\end{tikzpicture}
\caption{Tropical bisectors for $b-a=(-1,1,0)$, $(-1,1,\tfrac{1}{2})$ and $(-1,1,1)$, respectively.   The first two are in weak general position but the last one is not. Only the middle one is in general  position.
The bisectors are shown in black and the rest of the tropical hypersurface containing it in gray; see Proposition~\ref{prop:hypersurface}.}
\label{tikz:bisectors}
\end{figure}
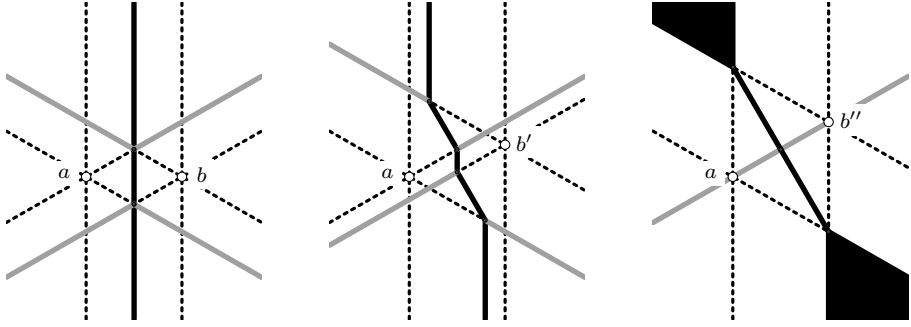

\begin{proof}
  Recall that a $\max$-tropical (Laurent) polynomial vanishes if the maximum is attained at least twice; see \cite[\S3.1]{Tropical+Book}.
  First, we check that there are no duplicates among the terms in the representation \eqref{eq:polynomial} of $\bisectorpoly(a,b)$.
  Assume the contrary, i.e., $x_i-a_i-x_j+a_j = x_i-b_i-x_j+b_j$ for some $i,j\in[d+1]$.
  Then $a_j-a_i=b_j-b_i$, which forces $\langle e_j-e_i, b-a \rangle =(b_j-a_j)-(b_i-a_i)=0$.
  Thus $b-a$ is parallel to the facet of $\ball{d}$ with normal vector $e_j-e_i$.
  This was explicitly excluded in our assumption, and we arrive at the desired contradiction.
  We infer that the $2d(d+1)$ terms are pairwise distinct.

  Let $x\in\bisector(a,b)$.
  This means that $\dist(a,x)=\dist(b,x)$, and thus
  \[ \max_{i,j\in [d+1]}(x_i-a_i-x_j+a_j) \ = \ \max_{k,\ell\in [d+1]} (x_k -b_k -x_\ell+b_\ell) \enspace. \]
  It follows that $\bisectorpoly(a,b)$ vanishes at $x$.

  The degree of the bisector tropical hypersurface can be read off any Laurent monomial like $x_i-x_j$ by adding $x_1+x_2+\dots+x_{d+1}$, which yields the true monomial $2x_i+ \sum_{k\in[d+1]-\{i,j\}} x_k$.
  The latter has degree $2+(d+1-2)=d+1$.
\end{proof}

Proposition~\ref{prop:hypersurface} yields a trivial algorithm to compute tropical bisectors in weak general position: enumerate the maximal cells of the tropical hypersurface defined by \eqref{eq:polynomial} and select those maximal cells that attain maxima in one monomial of type $x_i-a_i-x_j+a_j$ and one monomial of type $x_k-b_k-x_\ell+b_\ell$.
This algorithm needs to go through the $\Theta(d^4)$ choices of one monomial from the left and one from the right.
This is worst case optimal, as we will prove in Corollary~\ref{cor:d^4} that tropical bisectors can have $\Omega(d^4)$ maximal cells.

\begin{example}\label{exm:labeling}
  The labeling of the faces of a tropical bisector does not need to be unique if $a$ and $b$ are not in weak general position.
  For instance, if $b-a=(-1,1,0)$ then
  \[
    \begin{split}
      &\bisector_{(-+*),(+-*)}(a,b) \ = \ \bisector_{(-++),(+-+)}(a,b) \\
      &\quad = \ \bisector_{(-++),(+--)}(a,b) \ = \ \bisector_{(-+-),(+-+)}(a,b) \\
      &\quad= \ \bisector_{(-+-),(+--)}(a,b)
    \end{split}
  \]
  is the only face; see Figure~\ref{tikz:bisectors} (left).
\end{example}

\subsection{The bisection fan}
\label{ssec:ccFan}

Normal equivalence of tropical bisectors is preserved  by translation and scaling. In particular the equivalence class of $\bisector(a,b)$ is uniquely determined by the direction of the vector $b-a$.
The \emph{bisection fan} $\ccF$ is the complete polyhedral fan in $\torus{d+1}$ whose relatively open cones are defined by \enquote{$x$ and $y$ lie in the same cone if and only if $\bisector(0,x)$ and $\bisector(0,y)$ are normally equivalent}. Put differently, two points $a,b\in\torus{d+1}$ are in \emph{general position} if and only if the difference $b-a$ lies in a maximal cone of $\ccF$.
In the rest of this section we show that $\ccF$ is indeed a polyhedral fan and give an explicit description of it.

Recall that an \emph{ordered partition} or \emph{total preorder} on a finite set $S$ is a partition of $S$ into non-empty parts together with a total order on the parts. If the parts are denoted $S_1,\dots,S_k$ (in this order), we can write $x \le y$ meaning \enquote{$x\in S_i$ and $y\in S_j$ for some $i\leq j$}. In particular, for all $x,y \in S$ we have $x\le y \le x$  if and only if $x$ and $y$ lie in the same part.

Any real vector $v=(v_1,\dots,v_{d+1})\in \R^{d+1}$ induces an ordered partition $S(v)$ of $[d+1]$ by putting together the coordinates that have the same value and ordering the groups according to their values.
For example, the vector $v=(3, 1, 6, 4, 6, 3, 1)$ of length seven induces the partition $(\{2,7\}, \{1,6\}, \{4\}$, $\{3,5\})$ of the set $\{1,2,\dots,7\}$ into four parts.
Note that the ordered partition $S(v)$ is constant on the class $v+\R\ones$.
Hence these ordered partitions are defined for points in the projective tropical torus $\torus{d+1}$.

For $v,w\in \R^{d+1}$ with $S(v)=S(w)$ we have $S(v+w)=S(v)$ and, moreover, $S(\alpha v)=S(v)$ for any positive real $\alpha$.
That is to say, the stratification of $\R^{d+1}$ by ordered partitions forms a complete polyhedral fan.
In what follows we seek to refine that fan by recording which part, or which gap between parts, contains the \emph{midvalue}
\[
  \midv(v) \ := \ \frac{1}{2}\bigl(\max_{i\in[d+1]} v_i + \min_{i\in[d+1]} v_i\bigr) \enspace .
\]

\begin{figure}[tb]

\begin{tikzpicture}[x  = {(0.949970699338162cm,-0.0509944119960769cm)},
                    y  = {(-0.00366732069391268cm,0.984692961385677cm)},
                    z  = {(-0.31231750056295cm,-0.166671358495326cm)},
                    scale = 0.88,
                    color = {lightgray}]

  \definecolor{pointcolor_p11}{rgb}{ 1,0,0 }
  \tikzstyle{pointstyle_p11} = [fill=pointcolor_p11]

  \coordinate (v0_p11) at (-3, 0, 0);
  \coordinate (v1_p11) at (-7, 0, 0);
  \coordinate (v2_p11) at (-5, 2, 0);
  \coordinate (v3_p11) at (-5, -2, 0);
  \coordinate (v4_p11) at (-5, 0, 2);
  \coordinate (v5_p11) at (-5, 0, -2);
  \coordinate (v6_p11) at (-4, 1, 1);
  \coordinate (v7_p11) at (-4, 1, -1);
  \coordinate (v8_p11) at (-4, -1, 1);
  \coordinate (v9_p11) at (-4, -1, -1);
  \coordinate (v10_p11) at (-6, 1, 1);
  \coordinate (v11_p11) at (-6, 1, -1);
  \coordinate (v12_p11) at (-6, -1, 1);
  \coordinate (v13_p11) at (-6, -1, -1);

  \definecolor{edgecolor_p11}{rgb}{ 0,0,0 }

  \definecolor{facetcolor_p11}{rgb}{ 0.4667,0.9255,0.6196 }

  \tikzstyle{facestyle_p11} = [fill=facetcolor_p11, fill opacity=0.85, draw=edgecolor_p11, line width=1 pt, line cap=round, line join=round]

  \draw[facestyle_p11] (v2_p11) -- (v11_p11) -- (v1_p11) -- (v10_p11) -- (v2_p11) -- cycle;
  \draw[facestyle_p11] (v1_p11) -- (v13_p11) -- (v3_p11) -- (v12_p11) -- (v1_p11) -- cycle;
  \draw[facestyle_p11] (v5_p11) -- (v7_p11) -- (v0_p11) -- (v9_p11) -- (v5_p11) -- cycle;
  \draw[facestyle_p11] (v2_p11) -- (v7_p11) -- (v5_p11) -- (v11_p11) -- (v2_p11) -- cycle;
  \draw[facestyle_p11] (v5_p11) -- (v9_p11) -- (v3_p11) -- (v13_p11) -- (v5_p11) -- cycle;
  \draw[facestyle_p11] (v11_p11) -- (v5_p11) -- (v13_p11) -- (v1_p11) -- (v11_p11) -- cycle;

  \fill[pointcolor_p11] (v11_p11) circle (1 pt);
  \node at (v11_p11) [text=black, inner sep=0.5pt, above right, draw=none, align=left] {};
  \fill[pointcolor_p11] (v5_p11) circle (1 pt);
  \node at (v5_p11) [text=black, inner sep=0.5pt, above right, draw=none, align=left] {};
  \fill[pointcolor_p11] (v13_p11) circle (1 pt);
  \node at (v13_p11) [text=black, inner sep=0.5pt, above right, draw=none, align=left] {};

  \draw[facestyle_p11] (v0_p11) -- (v8_p11) -- (v3_p11) -- (v9_p11) -- (v0_p11) -- cycle;

  \fill[pointcolor_p11] (v9_p11) circle (1 pt);
  \node at (v9_p11) [text=black, inner sep=0.5pt, above right, draw=none, align=left] {};

  \draw[facestyle_p11] (v0_p11) -- (v7_p11) -- (v2_p11) -- (v6_p11) -- (v0_p11) -- cycle;

  \fill[pointcolor_p11] (v7_p11) circle (1 pt);
  \node at (v7_p11) [text=black, inner sep=0.5pt, above right, draw=none, align=left] {};

  \draw[facestyle_p11] (v4_p11) -- (v10_p11) -- (v1_p11) -- (v12_p11) -- (v4_p11) -- cycle;

  \fill[pointcolor_p11] (v1_p11) circle (1 pt);
  \node at (v1_p11) [text=black, inner sep=0.5pt, above right, draw=none, align=left] {};

  \draw[facestyle_p11] (v8_p11) -- (v4_p11) -- (v12_p11) -- (v3_p11) -- (v8_p11) -- cycle;

  \fill[pointcolor_p11] (v12_p11) circle (1 pt);
  \node at (v12_p11) [text=black, inner sep=0.5pt, above right, draw=none, align=left] {};
  \fill[pointcolor_p11] (v3_p11) circle (1 pt);
  \node at (v3_p11) [text=black, inner sep=0.5pt, above right, draw=none, align=left] {};

  \draw[facestyle_p11] (v6_p11) -- (v2_p11) -- (v10_p11) -- (v4_p11) -- (v6_p11) -- cycle;

  \fill[pointcolor_p11] (v2_p11) circle (1 pt);
  \node at (v2_p11) [text=black, inner sep=0.5pt, above right, draw=none, align=left] {};
  \fill[pointcolor_p11] (v10_p11) circle (1 pt);
  \node at (v10_p11) [text=black, inner sep=0.5pt, above right, draw=none, align=left] {};

  \draw[facestyle_p11] (v0_p11) -- (v6_p11) -- (v4_p11) -- (v8_p11) -- (v0_p11) -- cycle;

  \fill[pointcolor_p11] (v0_p11) circle (1 pt);
  \node at (v0_p11) [text=black, inner sep=0.5pt, above right, draw=none, align=left] {};
  \fill[pointcolor_p11] (v6_p11) circle (1 pt);
  \node at (v6_p11) [text=black, inner sep=0.5pt, above right, draw=none, align=left] {};
  \fill[pointcolor_p11] (v4_p11) circle (1 pt);
  \node at (v4_p11) [text=black, inner sep=0.5pt, above right, draw=none, align=left] {};
  \fill[pointcolor_p11] (v8_p11) circle (1 pt);
  \node at (v8_p11) [text=black, inner sep=0.5pt, above right, draw=none, align=left] {};


  \definecolor{pointcolor_p2}{rgb}{ 1,0,0 }
  \tikzstyle{pointstyle_p2} = [fill=pointcolor_p2]

  \coordinate (v0_p2) at (1.9, 0, 0);
  \coordinate (v1_p2) at (-1.9, 0, 0);
  \coordinate (v2_p2) at (0, 1.9, 0);
  \coordinate (v3_p2) at (0, -1.9, 0);
  \coordinate (v4_p2) at (0, 0, 1.9);
  \coordinate (v5_p2) at (0, 0, -1.9);
  \coordinate (v6_p2) at (1, 1, 1);
  \coordinate (v7_p2) at (1, 1, -1);
  \coordinate (v8_p2) at (1, -1, 1);
  \coordinate (v9_p2) at (1, -1, -1);
  \coordinate (v10_p2) at (-1, 1, 1);
  \coordinate (v11_p2) at (-1, 1, -1);
  \coordinate (v12_p2) at (-1, -1, 1);
  \coordinate (v13_p2) at (-1, -1, -1);

  \definecolor{edgecolor_p2}{rgb}{ 0,0,0 }

  \definecolor{facetcolor_p2}{rgb}{ 0.4667,0.9255,0.6196 }

  \tikzstyle{facestyle_p2} = [fill=facetcolor_p2, fill opacity=0.85, draw=edgecolor_p2, line width=1 pt, line cap=round, line join=round]

  \draw[facestyle_p2] (v3_p2) -- (v12_p2) -- (v13_p2) -- (v3_p2) -- cycle;
  \draw[facestyle_p2] (v12_p2) -- (v1_p2) -- (v13_p2) -- (v12_p2) -- cycle;
  \draw[facestyle_p2] (v0_p2) -- (v9_p2) -- (v7_p2) -- (v0_p2) -- cycle;
  \draw[facestyle_p2] (v7_p2) -- (v9_p2) -- (v5_p2) -- (v7_p2) -- cycle;
  \draw[facestyle_p2] (v2_p2) -- (v7_p2) -- (v11_p2) -- (v2_p2) -- cycle;
  \draw[facestyle_p2] (v11_p2) -- (v7_p2) -- (v5_p2) -- (v11_p2) -- cycle;
  \draw[facestyle_p2] (v3_p2) -- (v13_p2) -- (v9_p2) -- (v3_p2) -- cycle;
  \draw[facestyle_p2] (v9_p2) -- (v13_p2) -- (v5_p2) -- (v9_p2) -- cycle;
  \draw[facestyle_p2] (v1_p2) -- (v11_p2) -- (v13_p2) -- (v1_p2) -- cycle;
  \draw[facestyle_p2] (v11_p2) -- (v5_p2) -- (v13_p2) -- (v11_p2) -- cycle;

  \fill[pointcolor_p2] (v5_p2) circle (1 pt);
  \node at (v5_p2) [text=black, inner sep=0.5pt, above right, draw=none, align=left] {};
  \fill[pointcolor_p2] (v13_p2) circle (1 pt);
  \node at (v13_p2) [text=black, inner sep=0.5pt, above right, draw=none, align=left] {};

  \draw[facestyle_p2] (v10_p2) -- (v2_p2) -- (v11_p2) -- (v10_p2) -- cycle;
  \draw[facestyle_p2] (v10_p2) -- (v11_p2) -- (v1_p2) -- (v10_p2) -- cycle;

  \fill[pointcolor_p2] (v11_p2) circle (1 pt);
  \node at (v11_p2) [text=black, inner sep=0.5pt, above right, draw=none, align=left] {};

  \draw[facestyle_p2] (v2_p2) -- (v6_p2) -- (v7_p2) -- (v2_p2) -- cycle;
  \draw[facestyle_p2] (v6_p2) -- (v0_p2) -- (v7_p2) -- (v6_p2) -- cycle;

  \fill[pointcolor_p2] (v7_p2) circle (1 pt);
  \node at (v7_p2) [text=black, inner sep=0.5pt, above right, draw=none, align=left] {};

  \draw[facestyle_p2] (v12_p2) -- (v10_p2) -- (v1_p2) -- (v12_p2) -- cycle;

  \fill[pointcolor_p2] (v1_p2) circle (1 pt);
  \node at (v1_p2) [text=black, inner sep=0.5pt, above right, draw=none, align=left] {};

  \draw[facestyle_p2] (v12_p2) -- (v4_p2) -- (v10_p2) -- (v12_p2) -- cycle;
  \draw[facestyle_p2] (v8_p2) -- (v12_p2) -- (v3_p2) -- (v8_p2) -- cycle;
  \draw[facestyle_p2] (v8_p2) -- (v4_p2) -- (v12_p2) -- (v8_p2) -- cycle;

  \fill[pointcolor_p2] (v12_p2) circle (1 pt);
  \node at (v12_p2) [text=black, inner sep=0.5pt, above right, draw=none, align=left] {};

  \draw[facestyle_p2] (v10_p2) -- (v6_p2) -- (v2_p2) -- (v10_p2) -- cycle;

  \fill[pointcolor_p2] (v2_p2) circle (1 pt);
  \node at (v2_p2) [text=black, inner sep=0.5pt, above right, draw=none, align=left] {};

  \draw[facestyle_p2] (v10_p2) -- (v4_p2) -- (v6_p2) -- (v10_p2) -- cycle;

  \fill[pointcolor_p2] (v10_p2) circle (1 pt);
  \node at (v10_p2) [text=black, inner sep=0.5pt, above right, draw=none, align=left] {};

  \draw[facestyle_p2] (v6_p2) -- (v8_p2) -- (v0_p2) -- (v6_p2) -- cycle;
  \draw[facestyle_p2] (v6_p2) -- (v4_p2) -- (v8_p2) -- (v6_p2) -- cycle;

  \fill[pointcolor_p2] (v6_p2) circle (1 pt);
  \node at (v6_p2) [text=black, inner sep=0.5pt, above right, draw=none, align=left] {};
  \fill[pointcolor_p2] (v4_p2) circle (1 pt);
  \node at (v4_p2) [text=black, inner sep=0.5pt, above right, draw=none, align=left] {};

  \draw[facestyle_p2] (v8_p2) -- (v3_p2) -- (v9_p2) -- (v8_p2) -- cycle;

  \fill[pointcolor_p2] (v3_p2) circle (1 pt);
  \node at (v3_p2) [text=black, inner sep=0.5pt, above right, draw=none, align=left] {};

  \draw[facestyle_p2] (v0_p2) -- (v8_p2) -- (v9_p2) -- (v0_p2) -- cycle;

  \fill[pointcolor_p2] (v0_p2) circle (1 pt);
  \node at (v0_p2) [text=black, inner sep=0.5pt, above right, draw=none, align=left] {};
  \fill[pointcolor_p2] (v8_p2) circle (1 pt);
  \node at (v8_p2) [text=black, inner sep=0.5pt, above right, draw=none, align=left] {};
  \fill[pointcolor_p2] (v9_p2) circle (1 pt);
  \node at (v9_p2) [text=black, inner sep=0.5pt, above right, draw=none, align=left] {};


  \definecolor{pointcolor_p33}{rgb}{ 1,0,0 }
  \tikzstyle{pointstyle_p33} = [fill=pointcolor_p33]

  \coordinate (v0_p33) at (5, 1, 1);
  \coordinate (v1_p33) at (5, 1, -1);
  \coordinate (v2_p33) at (5, -1, 1);
  \coordinate (v3_p33) at (5, -1, -1);
  \coordinate (v4_p33) at (6, 0, 1);
  \coordinate (v5_p33) at (6, 0, -1);
  \coordinate (v6_p33) at (4, 0, 1);
  \coordinate (v7_p33) at (4, 0, -1);
  \coordinate (v8_p33) at (6, 1, 0);
  \coordinate (v9_p33) at (6, -1, 0);
  \coordinate (v10_p33) at (4, 1, 0);
  \coordinate (v11_p33) at (4, -1, 0);
  \coordinate (v12_p33) at (5.99, 0.99, 0.99);
  \coordinate (v13_p33) at (5.99, 0.99, -0.99);
  \coordinate (v14_p33) at (5.99, -0.99, 0.99);
  \coordinate (v15_p33) at (5.99, -0.99, -0.99);
  \coordinate (v16_p33) at (4.01, 0.99, 0.99);
  \coordinate (v17_p33) at (4.01, 0.99, -0.99);
  \coordinate (v18_p33) at (4.01, -0.99, 0.99);
  \coordinate (v19_p33) at (4.01, -0.99, -0.99);
  \coordinate (v20_p33) at (6.45, 0.5, 0.5);
  \coordinate (v21_p33) at (6.45, 0.5, -0.5);
  \coordinate (v22_p33) at (6.45, -0.5, 0.5);
  \coordinate (v23_p33) at (6.45, -0.5, -0.5);
  \coordinate (v24_p33) at (5.5, 1.45, 0.5);
  \coordinate (v25_p33) at (5.5, 1.45, -0.5);
  \coordinate (v26_p33) at (4.5, 1.45, 0.5);
  \coordinate (v27_p33) at (4.5, 1.45, -0.5);
  \coordinate (v28_p33) at (5.5, 0.5, 1.45);
  \coordinate (v29_p33) at (5.5, -0.5, 1.45);
  \coordinate (v30_p33) at (4.5, 0.5, 1.45);
  \coordinate (v31_p33) at (4.5, -0.5, 1.45);
  \coordinate (v32_p33) at (3.55, 0.5, 0.5);
  \coordinate (v33_p33) at (3.55, 0.5, -0.5);
  \coordinate (v34_p33) at (3.55, -0.5, 0.5);
  \coordinate (v35_p33) at (3.55, -0.5, -0.5);
  \coordinate (v36_p33) at (5.5, -1.45, 0.5);
  \coordinate (v37_p33) at (5.5, -1.45, -0.5);
  \coordinate (v38_p33) at (4.5, -1.45, 0.5);
  \coordinate (v39_p33) at (4.5, -1.45, -0.5);
  \coordinate (v40_p33) at (5.5, 0.5, -1.45);
  \coordinate (v41_p33) at (5.5, -0.5, -1.45);
  \coordinate (v42_p33) at (4.5, 0.5, -1.45);
  \coordinate (v43_p33) at (4.5, -0.5, -1.45);
  \coordinate (v44_p33) at (6.9, 0, 0);
  \coordinate (v45_p33) at (3.1, 0, 0);
  \coordinate (v46_p33) at (5, 1.9, 0);
  \coordinate (v47_p33) at (5, -1.9, 0);
  \coordinate (v48_p33) at (5, 0, 1.9);
  \coordinate (v49_p33) at (5, 0, -1.9);

  \definecolor{edgecolor_p33}{rgb}{ 0,0,0 }

  \definecolor{facetcolor_p33}{rgb}{ 0.4667,0.9255,0.6196 }

  \tikzstyle{facestyle_p33} = [fill=facetcolor_p33, fill opacity=0.85, draw=edgecolor_p33, line width=1 pt, line cap=round, line join=round]

  \draw[facestyle_p33] (v47_p33) -- (v38_p33) -- (v11_p33) -- (v39_p33) -- (v47_p33) -- cycle;
  \draw[facestyle_p33] (v34_p33) -- (v45_p33) -- (v35_p33) -- (v11_p33) -- (v34_p33) -- cycle;
  \draw[facestyle_p33] (v44_p33) -- (v23_p33) -- (v5_p33) -- (v21_p33) -- (v44_p33) -- cycle;
  \draw[facestyle_p33] (v38_p33) -- (v18_p33) -- (v11_p33) -- (v38_p33) -- cycle;
  \draw[facestyle_p33] (v5_p33) -- (v41_p33) -- (v49_p33) -- (v40_p33) -- (v5_p33) -- cycle;
  \draw[facestyle_p33] (v18_p33) -- (v34_p33) -- (v11_p33) -- (v18_p33) -- cycle;
  \draw[facestyle_p33] (v46_p33) -- (v25_p33) -- (v1_p33) -- (v27_p33) -- (v46_p33) -- cycle;
  \draw[facestyle_p33] (v39_p33) -- (v11_p33) -- (v19_p33) -- (v39_p33) -- cycle;
  \draw[facestyle_p33] (v19_p33) -- (v11_p33) -- (v35_p33) -- (v19_p33) -- cycle;

  \fill[pointcolor_p33] (v11_p33) circle (1 pt);
  \node at (v11_p33) [text=black, inner sep=0.5pt, above right, draw=none, align=left] {};

  \draw[facestyle_p33] (v40_p33) -- (v49_p33) -- (v42_p33) -- (v1_p33) -- (v40_p33) -- cycle;
  \draw[facestyle_p33] (v21_p33) -- (v5_p33) -- (v13_p33) -- (v21_p33) -- cycle;
  \draw[facestyle_p33] (v23_p33) -- (v15_p33) -- (v5_p33) -- (v23_p33) -- cycle;
  \draw[facestyle_p33] (v13_p33) -- (v5_p33) -- (v40_p33) -- (v13_p33) -- cycle;
  \draw[facestyle_p33] (v5_p33) -- (v15_p33) -- (v41_p33) -- (v5_p33) -- cycle;

  \fill[pointcolor_p33] (v5_p33) circle (1 pt);
  \node at (v5_p33) [text=black, inner sep=0.5pt, above right, draw=none, align=left] {};

  \draw[facestyle_p33] (v25_p33) -- (v13_p33) -- (v1_p33) -- (v25_p33) -- cycle;
  \draw[facestyle_p33] (v37_p33) -- (v47_p33) -- (v39_p33) -- (v3_p33) -- (v37_p33) -- cycle;
  \draw[facestyle_p33] (v27_p33) -- (v1_p33) -- (v17_p33) -- (v27_p33) -- cycle;
  \draw[facestyle_p33] (v41_p33) -- (v3_p33) -- (v43_p33) -- (v49_p33) -- (v41_p33) -- cycle;
  \draw[facestyle_p33] (v13_p33) -- (v40_p33) -- (v1_p33) -- (v13_p33) -- cycle;

  \fill[pointcolor_p33] (v40_p33) circle (1 pt);
  \node at (v40_p33) [text=black, inner sep=0.5pt, above right, draw=none, align=left] {};

  \draw[facestyle_p33] (v17_p33) -- (v1_p33) -- (v42_p33) -- (v17_p33) -- cycle;

  \fill[pointcolor_p33] (v1_p33) circle (1 pt);
  \node at (v1_p33) [text=black, inner sep=0.5pt, above right, draw=none, align=left] {};

  \draw[facestyle_p33] (v33_p33) -- (v7_p33) -- (v35_p33) -- (v45_p33) -- (v33_p33) -- cycle;
  \draw[facestyle_p33] (v49_p33) -- (v43_p33) -- (v7_p33) -- (v42_p33) -- (v49_p33) -- cycle;

  \fill[pointcolor_p33] (v49_p33) circle (1 pt);
  \node at (v49_p33) [text=black, inner sep=0.5pt, above right, draw=none, align=left] {};

  \draw[facestyle_p33] (v46_p33) -- (v27_p33) -- (v10_p33) -- (v26_p33) -- (v46_p33) -- cycle;
  \draw[facestyle_p33] (v26_p33) -- (v10_p33) -- (v16_p33) -- (v26_p33) -- cycle;
  \draw[facestyle_p33] (v15_p33) -- (v37_p33) -- (v3_p33) -- (v15_p33) -- cycle;
  \draw[facestyle_p33] (v3_p33) -- (v39_p33) -- (v19_p33) -- (v3_p33) -- cycle;

  \fill[pointcolor_p33] (v39_p33) circle (1 pt);
  \node at (v39_p33) [text=black, inner sep=0.5pt, above right, draw=none, align=left] {};

  \draw[facestyle_p33] (v15_p33) -- (v3_p33) -- (v41_p33) -- (v15_p33) -- cycle;

  \fill[pointcolor_p33] (v41_p33) circle (1 pt);
  \node at (v41_p33) [text=black, inner sep=0.5pt, above right, draw=none, align=left] {};

  \draw[facestyle_p33] (v3_p33) -- (v19_p33) -- (v43_p33) -- (v3_p33) -- cycle;

  \fill[pointcolor_p33] (v3_p33) circle (1 pt);
  \node at (v3_p33) [text=black, inner sep=0.5pt, above right, draw=none, align=left] {};

  \draw[facestyle_p33] (v17_p33) -- (v7_p33) -- (v33_p33) -- (v17_p33) -- cycle;
  \draw[facestyle_p33] (v19_p33) -- (v35_p33) -- (v7_p33) -- (v19_p33) -- cycle;

  \fill[pointcolor_p33] (v35_p33) circle (1 pt);
  \node at (v35_p33) [text=black, inner sep=0.5pt, above right, draw=none, align=left] {};

  \draw[facestyle_p33] (v10_p33) -- (v33_p33) -- (v45_p33) -- (v32_p33) -- (v10_p33) -- cycle;
  \draw[facestyle_p33] (v17_p33) -- (v42_p33) -- (v7_p33) -- (v17_p33) -- cycle;

  \fill[pointcolor_p33] (v42_p33) circle (1 pt);
  \node at (v42_p33) [text=black, inner sep=0.5pt, above right, draw=none, align=left] {};

  \draw[facestyle_p33] (v43_p33) -- (v19_p33) -- (v7_p33) -- (v43_p33) -- cycle;

  \fill[pointcolor_p33] (v43_p33) circle (1 pt);
  \node at (v43_p33) [text=black, inner sep=0.5pt, above right, draw=none, align=left] {};
  \fill[pointcolor_p33] (v19_p33) circle (1 pt);
  \node at (v19_p33) [text=black, inner sep=0.5pt, above right, draw=none, align=left] {};
  \fill[pointcolor_p33] (v7_p33) circle (1 pt);
  \node at (v7_p33) [text=black, inner sep=0.5pt, above right, draw=none, align=left] {};

  \draw[facestyle_p33] (v27_p33) -- (v17_p33) -- (v10_p33) -- (v27_p33) -- cycle;

  \fill[pointcolor_p33] (v27_p33) circle (1 pt);
  \node at (v27_p33) [text=black, inner sep=0.5pt, above right, draw=none, align=left] {};

  \draw[facestyle_p33] (v16_p33) -- (v10_p33) -- (v32_p33) -- (v16_p33) -- cycle;
  \draw[facestyle_p33] (v10_p33) -- (v17_p33) -- (v33_p33) -- (v10_p33) -- cycle;

  \fill[pointcolor_p33] (v10_p33) circle (1 pt);
  \node at (v10_p33) [text=black, inner sep=0.5pt, above right, draw=none, align=left] {};
  \fill[pointcolor_p33] (v17_p33) circle (1 pt);
  \node at (v17_p33) [text=black, inner sep=0.5pt, above right, draw=none, align=left] {};
  \fill[pointcolor_p33] (v33_p33) circle (1 pt);
  \node at (v33_p33) [text=black, inner sep=0.5pt, above right, draw=none, align=left] {};

  \draw[facestyle_p33] (v24_p33) -- (v8_p33) -- (v25_p33) -- (v46_p33) -- (v24_p33) -- cycle;
  \draw[facestyle_p33] (v8_p33) -- (v20_p33) -- (v44_p33) -- (v21_p33) -- (v8_p33) -- cycle;
  \draw[facestyle_p33] (v6_p33) -- (v32_p33) -- (v45_p33) -- (v34_p33) -- (v6_p33) -- cycle;

  \fill[pointcolor_p33] (v45_p33) circle (1 pt);
  \node at (v45_p33) [text=black, inner sep=0.5pt, above right, draw=none, align=left] {};

  \draw[facestyle_p33] (v8_p33) -- (v13_p33) -- (v25_p33) -- (v8_p33) -- cycle;

  \fill[pointcolor_p33] (v25_p33) circle (1 pt);
  \node at (v25_p33) [text=black, inner sep=0.5pt, above right, draw=none, align=left] {};

  \draw[facestyle_p33] (v48_p33) -- (v30_p33) -- (v6_p33) -- (v31_p33) -- (v48_p33) -- cycle;
  \draw[facestyle_p33] (v8_p33) -- (v21_p33) -- (v13_p33) -- (v8_p33) -- cycle;

  \fill[pointcolor_p33] (v21_p33) circle (1 pt);
  \node at (v21_p33) [text=black, inner sep=0.5pt, above right, draw=none, align=left] {};
  \fill[pointcolor_p33] (v13_p33) circle (1 pt);
  \node at (v13_p33) [text=black, inner sep=0.5pt, above right, draw=none, align=left] {};

  \draw[facestyle_p33] (v36_p33) -- (v2_p33) -- (v38_p33) -- (v47_p33) -- (v36_p33) -- cycle;
  \draw[facestyle_p33] (v12_p33) -- (v8_p33) -- (v24_p33) -- (v12_p33) -- cycle;
  \draw[facestyle_p33] (v12_p33) -- (v20_p33) -- (v8_p33) -- (v12_p33) -- cycle;

  \fill[pointcolor_p33] (v8_p33) circle (1 pt);
  \node at (v8_p33) [text=black, inner sep=0.5pt, above right, draw=none, align=left] {};

  \draw[facestyle_p33] (v29_p33) -- (v48_p33) -- (v31_p33) -- (v2_p33) -- (v29_p33) -- cycle;
  \draw[facestyle_p33] (v18_p33) -- (v6_p33) -- (v34_p33) -- (v18_p33) -- cycle;

  \fill[pointcolor_p33] (v34_p33) circle (1 pt);
  \node at (v34_p33) [text=black, inner sep=0.5pt, above right, draw=none, align=left] {};

  \draw[facestyle_p33] (v6_p33) -- (v16_p33) -- (v32_p33) -- (v6_p33) -- cycle;

  \fill[pointcolor_p33] (v32_p33) circle (1 pt);
  \node at (v32_p33) [text=black, inner sep=0.5pt, above right, draw=none, align=left] {};

  \draw[facestyle_p33] (v31_p33) -- (v6_p33) -- (v18_p33) -- (v31_p33) -- cycle;
  \draw[facestyle_p33] (v30_p33) -- (v16_p33) -- (v6_p33) -- (v30_p33) -- cycle;

  \fill[pointcolor_p33] (v6_p33) circle (1 pt);
  \node at (v6_p33) [text=black, inner sep=0.5pt, above right, draw=none, align=left] {};

  \draw[facestyle_p33] (v2_p33) -- (v18_p33) -- (v38_p33) -- (v2_p33) -- cycle;

  \fill[pointcolor_p33] (v38_p33) circle (1 pt);
  \node at (v38_p33) [text=black, inner sep=0.5pt, above right, draw=none, align=left] {};

  \draw[facestyle_p33] (v24_p33) -- (v46_p33) -- (v26_p33) -- (v0_p33) -- (v24_p33) -- cycle;

  \fill[pointcolor_p33] (v46_p33) circle (1 pt);
  \node at (v46_p33) [text=black, inner sep=0.5pt, above right, draw=none, align=left] {};

  \draw[facestyle_p33] (v14_p33) -- (v2_p33) -- (v36_p33) -- (v14_p33) -- cycle;
  \draw[facestyle_p33] (v28_p33) -- (v0_p33) -- (v30_p33) -- (v48_p33) -- (v28_p33) -- cycle;
  \draw[facestyle_p33] (v2_p33) -- (v31_p33) -- (v18_p33) -- (v2_p33) -- cycle;

  \fill[pointcolor_p33] (v31_p33) circle (1 pt);
  \node at (v31_p33) [text=black, inner sep=0.5pt, above right, draw=none, align=left] {};
  \fill[pointcolor_p33] (v18_p33) circle (1 pt);
  \node at (v18_p33) [text=black, inner sep=0.5pt, above right, draw=none, align=left] {};

  \draw[facestyle_p33] (v14_p33) -- (v29_p33) -- (v2_p33) -- (v14_p33) -- cycle;

  \fill[pointcolor_p33] (v2_p33) circle (1 pt);
  \node at (v2_p33) [text=black, inner sep=0.5pt, above right, draw=none, align=left] {};

  \draw[facestyle_p33] (v44_p33) -- (v20_p33) -- (v4_p33) -- (v22_p33) -- (v44_p33) -- cycle;
  \draw[facestyle_p33] (v4_p33) -- (v28_p33) -- (v48_p33) -- (v29_p33) -- (v4_p33) -- cycle;

  \fill[pointcolor_p33] (v48_p33) circle (1 pt);
  \node at (v48_p33) [text=black, inner sep=0.5pt, above right, draw=none, align=left] {};

  \draw[facestyle_p33] (v9_p33) -- (v36_p33) -- (v47_p33) -- (v37_p33) -- (v9_p33) -- cycle;

  \fill[pointcolor_p33] (v47_p33) circle (1 pt);
  \node at (v47_p33) [text=black, inner sep=0.5pt, above right, draw=none, align=left] {};

  \draw[facestyle_p33] (v15_p33) -- (v9_p33) -- (v37_p33) -- (v15_p33) -- cycle;

  \fill[pointcolor_p33] (v37_p33) circle (1 pt);
  \node at (v37_p33) [text=black, inner sep=0.5pt, above right, draw=none, align=left] {};

  \draw[facestyle_p33] (v0_p33) -- (v26_p33) -- (v16_p33) -- (v0_p33) -- cycle;

  \fill[pointcolor_p33] (v26_p33) circle (1 pt);
  \node at (v26_p33) [text=black, inner sep=0.5pt, above right, draw=none, align=left] {};

  \draw[facestyle_p33] (v12_p33) -- (v24_p33) -- (v0_p33) -- (v12_p33) -- cycle;

  \fill[pointcolor_p33] (v24_p33) circle (1 pt);
  \node at (v24_p33) [text=black, inner sep=0.5pt, above right, draw=none, align=left] {};

  \draw[facestyle_p33] (v30_p33) -- (v0_p33) -- (v16_p33) -- (v30_p33) -- cycle;

  \fill[pointcolor_p33] (v30_p33) circle (1 pt);
  \node at (v30_p33) [text=black, inner sep=0.5pt, above right, draw=none, align=left] {};
  \fill[pointcolor_p33] (v16_p33) circle (1 pt);
  \node at (v16_p33) [text=black, inner sep=0.5pt, above right, draw=none, align=left] {};

  \draw[facestyle_p33] (v28_p33) -- (v12_p33) -- (v0_p33) -- (v28_p33) -- cycle;

  \fill[pointcolor_p33] (v0_p33) circle (1 pt);
  \node at (v0_p33) [text=black, inner sep=0.5pt, above right, draw=none, align=left] {};

  \draw[facestyle_p33] (v22_p33) -- (v4_p33) -- (v14_p33) -- (v22_p33) -- cycle;
  \draw[facestyle_p33] (v4_p33) -- (v20_p33) -- (v12_p33) -- (v4_p33) -- cycle;

  \fill[pointcolor_p33] (v20_p33) circle (1 pt);
  \node at (v20_p33) [text=black, inner sep=0.5pt, above right, draw=none, align=left] {};

  \draw[facestyle_p33] (v44_p33) -- (v22_p33) -- (v9_p33) -- (v23_p33) -- (v44_p33) -- cycle;

  \fill[pointcolor_p33] (v44_p33) circle (1 pt);
  \node at (v44_p33) [text=black, inner sep=0.5pt, above right, draw=none, align=left] {};

  \draw[facestyle_p33] (v14_p33) -- (v4_p33) -- (v29_p33) -- (v14_p33) -- cycle;

  \fill[pointcolor_p33] (v29_p33) circle (1 pt);
  \node at (v29_p33) [text=black, inner sep=0.5pt, above right, draw=none, align=left] {};

  \draw[facestyle_p33] (v4_p33) -- (v12_p33) -- (v28_p33) -- (v4_p33) -- cycle;

  \fill[pointcolor_p33] (v4_p33) circle (1 pt);
  \node at (v4_p33) [text=black, inner sep=0.5pt, above right, draw=none, align=left] {};
  \fill[pointcolor_p33] (v12_p33) circle (1 pt);
  \node at (v12_p33) [text=black, inner sep=0.5pt, above right, draw=none, align=left] {};
  \fill[pointcolor_p33] (v28_p33) circle (1 pt);
  \node at (v28_p33) [text=black, inner sep=0.5pt, above right, draw=none, align=left] {};

  \draw[facestyle_p33] (v9_p33) -- (v14_p33) -- (v36_p33) -- (v9_p33) -- cycle;

  \fill[pointcolor_p33] (v36_p33) circle (1 pt);
  \node at (v36_p33) [text=black, inner sep=0.5pt, above right, draw=none, align=left] {};

  \draw[facestyle_p33] (v23_p33) -- (v9_p33) -- (v15_p33) -- (v23_p33) -- cycle;

  \fill[pointcolor_p33] (v23_p33) circle (1 pt);
  \node at (v23_p33) [text=black, inner sep=0.5pt, above right, draw=none, align=left] {};
  \fill[pointcolor_p33] (v15_p33) circle (1 pt);
  \node at (v15_p33) [text=black, inner sep=0.5pt, above right, draw=none, align=left] {};

  \draw[facestyle_p33] (v22_p33) -- (v14_p33) -- (v9_p33) -- (v22_p33) -- cycle;

  \fill[pointcolor_p33] (v22_p33) circle (1 pt);
  \node at (v22_p33) [text=black, inner sep=0.5pt, above right, draw=none, align=left] {};
  \fill[pointcolor_p33] (v14_p33) circle (1 pt);
  \node at (v14_p33) [text=black, inner sep=0.5pt, above right, draw=none, align=left] {};
  \fill[pointcolor_p33] (v9_p33) circle (1 pt);
  \node at (v9_p33) [text=black, inner sep=0.5pt, above right, draw=none, align=left] {};


\end{tikzpicture}


  \caption{The fans $\Fan(\ball{3})$, $\Fan(A_3)$, and the bisection fan $\bop^3$.}
  \label{fig:three_fans}
\end{figure}

The \emph{bisected ordered partition} of $[d+1]$ induced by $v\in \R^{d+1}$ is the ordered partition $S(v)$ as defined above, together with the information of which (values of) parts are smaller, equal or greater than the midvalue $\midv(v)$; see also \cite{Develin:2005}.
Equivalently, this is the ordered partition associated with the \emph{extended vector} $(v,\midv(v)) \in \R^{d+2}$.
We denote by $\bop^d$ the fan of bisected ordered partitions of dimension $d$.

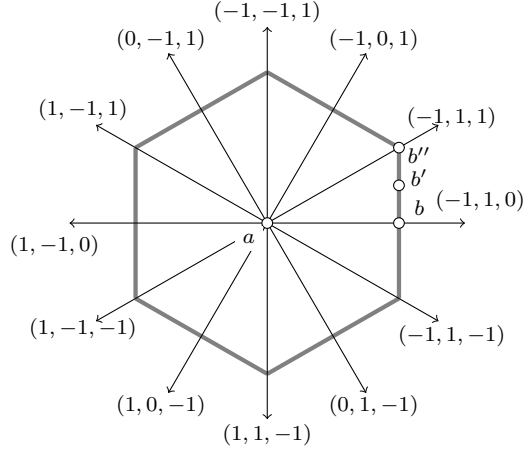
\begin{figure}[tb]
  \begin{tikzpicture}[scale=2]\scriptsize
    \foreach \x in {0,60,...,300} {
      \draw [gray, ultra thick] (\x-30:1 cm) -- (\x + 30:1 cm);
    }
    \foreach \x in {30,60,...,360} {
      \node (n\x) at (\x+60:1.4cm) {};
      \draw[->] (0,0) -- (\x:1.3 cm);
    }
    \node at (n30) {$(-1,-1,1)$};
    \node at (n60) {$(0,-1,1)$};
    \node [yshift=0.1cm] at (n90) {$(1,-1,1)$};
    \node [yshift=-0.3cm] at (n120) {$(1,-1,0)$};
    \node at (n150) {$(1,-1,-1)$};
    \node at (n180) {$(1,0,-1)$};
    \node at (n210) {$(1,1,-1)$};
    \node at (n240) {$(0,1,-1)$};
    \node [yshift=-0.1cm] at (n270) {$(-1,1,-1)$};
    \node[yshift=0.3cm] at (n300) {$(-1,1,0)$};
    \node at (n330) {$(-1,1,1)$};
    \node at (n360) {$(-1,0,1)$};
    \draw [fill=white,draw=black] (0.866,0.) circle (1.0pt);
    \draw[color=black] (1.,0.1) node {$b$};
    \draw [fill=white,draw=black] (0.866,0.25) circle (1.0pt);
    \draw[color=black] (1.,0.30) node {$b'$};
    \draw [fill=white,draw=black] (0.866,0.5) circle (1.0pt);
    \draw[color=black] (1.,0.45) node {$b''$};
    \draw [fill=white,draw=black] (0,0) circle (1.0pt);
    \draw[color=black] (-0.12,-0.1) node[fill=white] {$a$};
  \end{tikzpicture}
  \caption{The bisection fan, $\ccF$, for $d=2$. The three vectors $b-a$ for Figure~\ref{tikz:bisectors} have been marked.}
  \label{tikz:fan}
\end{figure}

\begin{remark}\label{rem:fans}
  The \enquote{finest} or \enquote{most generic} ordered partitions are the permutations, in which each part is a singleton.
  Hence, the fan of ordered partitions equals the normal fan of the \emph{permutahedron}.
  This, in turn, coincides with the fan of regions in the \emph{braid arrangement} or Coxeter arrangement of type $A_d$, which consists of the hyperplanes $\SetOf{x}{x_i = x_j}$ for $1\leq i<j\leq d+1$.
  We denote this fan $\Fan(A_d)$.
  It is intermediate between the central fan of the tropical ball (which is coarser) and the fan of bisected ordered partitions (which is finer):
  \[
    \Fan(\ball{d}) \ \leq \ \Fan(A_{d}) \ \leq \ \bop^{d}  \enspace;
  \]
  see Figure~\ref{fig:three_fans} for a visualization of the case $d=3$.
  Note that $\Fan(A_{d})$ is also the fan of weak general position: $a$ and $b$ are in weak general position if and only if $b-a$ lies in a full-dimensional cell.
\end{remark}

\begin{example}\label{ex:2dim}
  In $d=2$, the fans $\Fan(A_{d})$ and $\Fan(\ball{d})$ coincide, they form the face fan of the regular hexagon $\ball{2}$.
  The bisection fan  $\bop^{d}$ is the barycentric subdivision of it; see Figure~\ref{tikz:fan}.
  Excluding permutations of the coordinates, and sign inversion, we infer that there are three types of tropical bisectors in the plane, and these are shown in Figure \ref{tikz:bisectors}.
  The type to the left is in weak general position but not in general position, the type to the left is in general position, and the type to the right is not even in weak general position.
\end{example}

Recall that the $\max$-tropical \emph{line segment} spanned by two points, $a$ and $b$, is the set
\[
  \tropseg{a}{b} \ := \ \SetOf{\max(\alpha\ones + a,\beta\ones + b)}{\alpha,\beta\in\R} \ \subset \ \torus{d+1}\enspace .
\]
It is worth noting that the combinatorial types of tropical segments are classified by the braid fan $\Fan(A_d)$; see \cite[Prop.~5.11]{Tropical+Book}.
In this sense the fan of bisected ordered partitions $\bop^{d}$ classifies the combinatorial types of \enquote{bisected tropical segments}:

\begin{proposition}
  The bisected ordered partition of $b-a$ contains the same information as the combinatorial type of the tropical line segment $[a,b]$ together with the information of which part contains the midpoint.
\end{proposition}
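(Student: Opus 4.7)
Set $v:=b-a$ and parameterize the tropical segment by $p(t):=\max(t\ones+a,\,b)$, so that $p(t)_i=\max(t+a_i,b_i)$. As $t$ increases through $[\min_i v_i,\max_i v_i]$, the point $p(t)$ traverses $\tropseg{a}{b}$ from $b$ to $a$, the $i$-th coordinate switching from $b_i$ to $t+a_i$ exactly when $t$ crosses $v_i$. Thus the breakpoints of $\tropseg{a}{b}$ occur precisely at the distinct values among the $v_i$, and the combinatorial type of the segment is encoded by the ordered partition $S(v)$; this is the content of the classification of tropical segments by the braid fan $\Fan(A_d)$ mentioned right before the statement.

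For the midpoint, I would compute the distances from $p(t)$ to the two endpoints. From $p(t)_i-b_i=\max(t-v_i,0)$ and $p(t)_i-a_i=\max(t,v_i)$, a short case analysis valid on the range $t\in[\min_i v_i,\max_i v_i]$ yields
\[
  \dist(p(t),b) \ = \ t-\min_i v_i, \qquad \dist(p(t),a) \ = \ \max_i v_i - t.
\]
Both are affine in $t$, and they are equal at the unique value $t=\midv(v)=\tfrac12(\max_i v_i+\min_i v_i)$. Hence $p(\midv(v))$ is the point of $\tropseg{a}{b}$ equidistant from $a$ and $b$, which plays the role of the midpoint.

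The final step is then essentially tautological: the combinatorial position of $p(\midv(v))$ on the segment is determined by where $\midv(v)$ lies among the distinct values $v_{(1)}<\dots<v_{(k)}$ of the parts of $S(v)$. If $\midv(v)=v_{(j)}$, the midpoint is the vertex of $\tropseg{a}{b}$ attached to the $j$-th part of $S(v)$; if $v_{(j)}<\midv(v)<v_{(j+1)}$, it lies in the relative interior of the edge between the $j$-th and $(j+1)$-th vertices. These are precisely the refinements of $S(v)$ that enrich an ordered partition into a bisected ordered partition, so the data \emph{combinatorial type of $\tropseg{a}{b}$ plus part of $S(v)$ containing the midpoint} is equivalent to the bisected ordered partition of $v$. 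I do not expect any real obstacle; the only care needed is to separate the boundary case $\midv(v)=v_{(j)}$ (midpoint at a vertex of $\tropseg{a}{b}$) from the generic case $v_{(j)}<\midv(v)<v_{(j+1)}$ (midpoint in the relative interior of an edge).
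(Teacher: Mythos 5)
Your proposal is correct and follows essentially the same approach as the paper's proof: parameterize the tropical segment, observe that breakpoints occur at the distinct values of $v_i=b_i-a_i$ so that the combinatorial type is recorded by the ordered partition $S(v)$, and then locate the midpoint via the midvalue $\midv(v)$. You simply make the computations explicit (verifying $\dist(p(t),b)=t-\min_i v_i$ and $\dist(p(t),a)=\max_i v_i-t$) and are slightly more careful than the paper in distinguishing the case where the midpoint falls on a vertex of the segment from the case where it falls in the relative interior of an edge.
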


\begin{proof}
  Suppose for simplicity that $a,b\in\R^{d+1}$ satisfy
  \begin{equation}\label{eq:sorting}
    b_1-a_1 \ \leq \ b_2-a_2 \ \leq \ \dots \ \leq \ b_{d+1}-a_{d+1} \enspace .
  \end{equation}
  Then $\tropseg{a}{b}$ is the union of at most $d$ ordinary line segments, one for each subset of coordinates between a strict inequality in \eqref{eq:sorting}.
  That is, the combinatorics of the tropical segment is the same as the ordered partition of $b-a$.
  The midvalue $\midv(b-a)$ selects one of the ordinary segments.
\end{proof}

The goal of the rest of this section is to prove the following:

\begin{theorem}
  \label{thm:ccfan}
  We have $\ccF= \bop^d$.
  That is, given $a,b,a',b' \in \torus{d+1}$ the polyhedra $\bisector(a,b)$ and $\bisector(a',b')$ are normally equivalent if and only if $b-a$ and $b'-a'$ induce the same bisected ordered partition of $[d+1]$.
\end{theorem}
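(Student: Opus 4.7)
My plan is to prove $\ccF = \bop^d$ by showing that the two fans share the same maximal cones, analyzed through the explicit cell structure of the bisector supplied by Proposition~\ref{prop:polyhedra-bisectors}. Both fans are complete in $\torus{d+1}$, so it suffices to establish both refinement directions, and the argument is most naturally organized by working inside a fixed chamber of the braid fan.

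First I would verify that $\ccF$ refines $\Fan(A_d)$. If $v = b-a$ crosses a wall $v_i = v_j$, then $b-a$ becomes parallel to a facet of $\ball{d}$, and by Proposition~\ref{prop:full-dimensional} the bisector gains full-dimensional cells, forcing a change of combinatorial type. Combined with the refinement $\bop^d \leq \Fan(A_d)$ from Remark~\ref{rem:fans}, this lets me fix a total ordering of the coordinates of $v$ and work inside a single open chamber of $\Fan(A_d)$. Inside this chamber, the walls of $\bop^d$ are exactly those hyperplanes where the midvalue $\midv(v) = \tfrac12(\max_i v_i + \min_i v_i)$ equals some coordinate $v_r$, equivalently, where $v_r$ moves between parts of the bisected ordered partition.

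The heart of the argument is to show that the combinatorial type of $\bisector(a,b)$ changes across exactly those same walls. Recall that maximal cones of $\Fan(\ball{d})$ are indexed by ordered pairs $(i,j)$ with functional $x_i - x_j$. My plan is to analyze each candidate maximal cell $\bisector_{((i,j),(k,\ell))}(a,b)$, whose defining equation on its interior reads $x_i - x_j - x_k + x_\ell = v_\ell - v_k$ together with the min/max inequalities guaranteeing membership in the prescribed cones. Using Theorem~\ref{thm:genpos}, I would translate non-emptiness and correct codimension of each such cell into a system of linear inequalities on $v$ of two kinds: inequalities $v_i \leq v_j$ (coming from the chamber of $\Fan(A_d)$) and inequalities of the form $v_i + v_j \leq v_k + v_\ell$, which after specializing to the extremal coordinates reduce to comparisons with $2\midv(v) = \max v + \min v$. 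Collecting all such inequalities over the choices of $(i,j),(k,\ell)$ yields exactly the defining system of a cone of $\bop^d$, which proves $\bop^d \leq \ccF$. For the reverse refinement I would exhibit, for any two vectors in distinct cones of $\bop^d$, a combinatorial invariant of the bisector distinguishing them---most concretely the labeling of facets of $\ball{d}$ by the two sectors $H(v), H(-v)$ of Lemma~\ref{lem:halfsphere_facets}, transported to the bisector by the central projection of Theorem~\ref{thm:central_projection}, refined by counting how many of the indexing pairs $(F,F')$ yield full-dimensional cells.

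The main obstacle is the explicit cell-by-cell feasibility analysis and, above all, explaining cleanly why the combinatorial data that emerges is precisely the bisected ordered partition rather than some other refinement of it. Two subtle issues arise: different indexing pairs $(F,F')$ may label the same geometric cell (cf.\ Example~\ref{exm:labeling}), so care is needed to identify genuine maximal cells; and the fact that the distinguishing sums are $v_i + v_j$ versus $\max v + \min v$, rather than other averages of the $v_r$, must be justified by the structure of the equation $x_i - x_j = x_k - x_\ell + (v_\ell - v_k)$ that maximizes one ``pair difference'' relative to $a$ against another relative to $b$. Once this computation is unpacked, the walls of $\ccF$ and $\bop^d$ coincide, and the theorem follows.
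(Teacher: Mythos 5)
Your overall plan for the forward direction---show that the feasibility of each candidate cell $\bisector_{(F,G)}(a,b)$ depends only on the bisected ordered partition of $v=b-a$ by eliminating the auxiliary variables from the defining system---is exactly the route the paper takes (via Lemma~\ref{lem:F+G+faces} and the three-case analysis in Lemma~\ref{lem:ccfan-part1}). But you have stated the conclusion of that analysis rather than performed it, and the step you compress into ``inequalities of the form $v_i + v_j \leq v_k + v_\ell$\dots reduce to comparisons with $2\midv(v)$'' hides the real work. It is not automatic that only sums involving the global max and min survive: one must observe that the constraints pinning $\gamma+\delta$ (resp.\ $\gamma-\delta$) to some $v_i$ force that $i$ to be a maximizer (resp.\ minimizer), which is why the midvalue and nothing else enters. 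The paper handles this by splitting into the three cases $L_{\pm 1}$ both empty, one empty, or both nonempty; without some such case distinction your argument is incomplete.

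The reverse direction is where you genuinely diverge, and there is a gap. You propose to distinguish vectors lying in different cones of $\bop^d$ by the sector labeling $\mathcal H_S$ of $\partial\ball{d}$, ``refined by counting how many pairs $(F,F')$ yield full-dimensional cells.'' But the sectors $H(v)$ and $H(-v)$ are determined purely by the signs of $v_i - v_j$, i.e.\ by the \emph{ordered partition} of $v$, and carry no information about where the midvalue sits. So two vectors with the same ordered partition but different midvalue positions (which is exactly the new data $\bop^d$ records over $\Fan(A_d)$) have identical sectors, and your invariant fails to separate them. The ``counting full-dimensional cells'' refinement cannot rescue this: for $v$ in weak general position the bisector has no full-dimensional cells at all (Proposition~\ref{prop:full-dimensional}), and if you instead mean maximal (codimension-one) cells of the bisector, you have given no argument that their count changes across a midvalue wall. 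The paper avoids this entirely by explicitly constructing, for any two distinct bisected ordered partitions, a concrete pair of faces $(F,G)$ with $\bisector_{(F,G)}$ nonempty for one vector and empty for the other (Lemma~\ref{lem:ccfan-part2}, Cases I--III); the Case~III construction is precisely what detects midvalue changes and is the piece your invariant lacks.

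One smaller remark: Example~\ref{exm:labeling}, which you invoke as a caution about repeated labels, concerns sites \emph{not} in weak general position; once you restrict to a fixed chamber of $\Fan(A_d)$, as you do, that ambiguity does not arise, so this worry is unfounded rather than a subtlety to track.
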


\subsection{Proof of Theorem~\ref{thm:ccfan}}

In \eqref{eq:type} we defined the type $(F_-,F_*,F_+)$ of a face $F$ of $\ball{d}$ or the corresponding face cone in $\Fan(\ball{d})$.
For a pair of faces, $F$ and $G$, this gives rise to the following \emph{labeling partition} of $[d+1]$:
\begin{equation}\label{eq:labels}
  \begin{aligned}
    L_0 \ &:= \ (F_-\cap G_-)\cup (F_+\cap G_+)  \enspace,\\
    L_+ \ &:= \ \bigl(F_+ \cap G_*\bigr) \cup \bigl(F_*\cap G_-\bigr)  \enspace,\\
    L_- \ &:= \ \bigl(F_-\cap G_* \bigr) \cup \bigl(F_* \cap G_+ \bigr)  \enspace,\\
    L_{+1} \ &:= \ F_+\cap G_-  \enspace,\\
    L_{-1} \ &:= \ F_-\cap G_+  \enspace,\\
    L_{*} \ &:= \  F_* \cap G_* \enspace.
  \end{aligned}
\end{equation}
As a first step in the proof of Theorem~\ref{thm:ccfan}, the following lemma characterizes when $\bisector_{(F,G)}(a,b)$ is non-empty.
Recall that this is the case if and only if there is a tropical ball touching $a$ and $b$ at faces $F$ and $G$, respectively.

\begin{lemma}\label{lem:F+G+faces}
  Let $F$ and $G$ be a fixed pair of faces of $\ball{d}$ with the labeling partition defined as in \eqref{eq:labels}.
  Further let $a,b\in \torus{d+1}$.
  Then the set $\bisector_{(F,G)}(a,b)$ is not empty if and only if there exist $\gamma\in \R$ and $\delta \in [0,\infty)$ such that the following conditions are satisfied:
  \begin{equation}\label{eq:system} \left \{ \begin{array}{ll}
        (b-a)_i = \gamma & \text{ if  } i\in L_0 \enspace,\\
        (b-a)_i \in [\gamma,\gamma+\delta] & \text{ if  } i\in L_+ \enspace,\\
        (b-a)_i \in [\gamma-\delta,\gamma] & \text{ if  } i\in L_- \enspace,\\
        (b-a)_i =  \gamma-\delta & \text{ if  } i\in L_{-1} \enspace,\\
        (b-a)_i = \gamma+\delta & \text{ if  } i\in L_{+1} \enspace,\\
        (b-a)_i \in [\gamma-\delta,\gamma+\delta] & \text{ if  } i\in L_* \enspace.
  \end{array}\right. \end{equation}
\end{lemma}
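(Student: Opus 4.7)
The strategy is to parametrize both face-cone conditions $x-a\in\cone(F)$ and $x-b\in\cone(G)$ by the maxima and minima of the coordinate sequences, and then read off $(b-a)_i$ case-by-case according to the nine atoms $F_\epsilon\cap G_\eta$ (for $\epsilon,\eta\in\{-,*,+\}$) which collapse to the six blocks of the labeling partition \eqref{eq:labels}.

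For the forward direction, I would take $x\in\bisector_{(F,G)}(a,b)$ and set $\delta:=\dist(a,x)=\dist(b,x)\ge 0$, $M_a:=\max_i(x-a)_i$, $m_a:=\min_i(x-a)_i=M_a-\delta$, and analogously $M_b,m_b$ with $M_b-m_b=\delta$. The condition $x-a\in\cone(F)$ (see Proposition~\ref{prop:polyhedra-bisectors} and the face description in \eqref{eq:type}) forces $(x-a)_i=M_a$ on $F_+$, $(x-a)_i=m_a$ on $F_-$, and $(x-a)_i\in[m_a,M_a]$ on $F_*$; the analogous statement holds for $x-b$ and $G$. Setting $\gamma:=M_a-M_b$ and substituting into $(b-a)_i=(x-a)_i-(x-b)_i$ then yields an explicit value or interval for $(b-a)_i$ on each of the nine atoms. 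For example, $i\in F_+\cap G_-\subseteq L_{+1}$ gives $(b-a)_i=M_a-m_b=\gamma+\delta$; the atom $i\in F_+\cap G_*\subseteq L_+$ gives $(b-a)_i\in[M_a-M_b,M_a-m_b]=[\gamma,\gamma+\delta]$; and so on. The six lines of \eqref{eq:system} are obtained this way.

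For the converse, given $\gamma\in\R$ and $\delta\ge 0$ satisfying \eqref{eq:system}, I would construct $x$ by fixing a representative (exploiting the $\R\ones$ freedom) so that $M_b=0$; this pins down $M_a=\gamma$, $m_a=\gamma-\delta$, $m_b=-\delta$. Each coordinate $x_i$ is then independently constrained by both faces: $x_i-a_i$ must lie in $\{M_a\}$, $\{m_a\}$, or $[m_a,M_a]$ according to the class of $i$ under $F$, and similarly for $x_i-b_i$ under $G$. A valid $x_i$ therefore exists if and only if the two admissible intervals $a_i+I^F_i$ and $b_i+I^G_i$ intersect, and the conditions of \eqref{eq:system} are precisely the case-by-case translation of this non-emptiness into a constraint on $(b-a)_i$. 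Once every $x_i$ has been chosen, the resulting point $x$ lies in $\bisector_{(F,G)}(a,b)$ automatically: both cone inclusions hold by construction, and both $x-a$ and $x-b$ have spread $\delta$, so the tropical distances from $a$ and $b$ to $x$ coincide.

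The main obstacle is purely combinatorial bookkeeping. The labels $L_+$ and $L_-$ each arise from two different atoms ($F_+\cap G_*$ and $F_*\cap G_-$ for $L_+$; symmetric for $L_-$), so one must verify that both atoms produce the same interval $[\gamma,\gamma+\delta]$ (respectively $[\gamma-\delta,\gamma]$), and that the freedom to add a common shift to $\gamma$, $M_a$, $M_b$ causes no ambiguity in the equations. Once these routine checks are carried out, the forward and backward constructions are mutually inverse, yielding the biconditional.
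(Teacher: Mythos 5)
Your proof is correct and follows essentially the same route as the paper's: both observe that membership in $(a+F)\cap(b+G)\cap\bisector(a,b)$ imposes coordinate-separable constraints on $x$, with $\gamma$ your $M_a-M_b$ matching the paper's $\gamma_b-\gamma_a$ where $\gamma_a=\min_i(a_i-x_i)=-M_a$, and that eliminating $x$ yields exactly \eqref{eq:system}. The paper compresses the converse into a one-line appeal to Fourier--Motzkin elimination of the $x_i$, whereas you reconstruct $x$ coordinate by coordinate via intersecting intervals; this is precisely what Fourier--Motzkin says here, so the two arguments coincide.
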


\begin{proof}
  Let us assume that the face of the tropical bisector $\bisector(a,b)$ defined by $(F,G)$ is non-empty.
  Then there is a point $x$ such that $\dist(a,x)=\dist(x,b)=\delta$ and $a-x\in F$ as well as $b-x\in G$.
  We set $\gamma_a=\min_{i\in[d+1]}(a_i-x_i)$ and $\gamma_b=\min_{i\in[d+1]}(b_i-x_i)$.
  The possible values for the coordinates of $a-x$ and $b-x$ are
  \begin{align}
   \label{eq:system_a}
    \left\{ \begin{array}{ll} (a-x)_i= \gamma_a+\delta & \text{if  } i\in F_- \enspace, \\ (a-x)_i=\gamma_a & \text{if  } i \in F_+ \enspace, \\ (a-x)_i \in [\gamma_a,\gamma_a+\delta] & \text {if  } i\in[d+1]\setminus (F_-\cup F_+) \end{array}\right.
    \intertext{and}
    \label{eq:system_b}
    \left\{ \begin{array}{ll} (b-x)_i=\gamma_b+\delta & \text{if  } i\in G_- \enspace, \\ (b-x)_i=\gamma_b & \text{if  } i \in G_+ \enspace, \\ (b-x)_i \in [\gamma_b,\gamma_b+\delta] & \text {if  } i\in[d+1]\setminus (G_-\cup G_+) \enspace, \end{array}\right.
  \end{align}
  for some $\delta\geq 0$.
  Setting $\gamma=\gamma_b-\gamma_a$ the above translates into \eqref{eq:system}.

  For the converse, note that going from \eqref{eq:system_a} and \eqref{eq:system_b} to  \eqref{eq:system} is the Fourier-Motzkin elimination of the variables $x_i$.
  Therefore, any $\gamma$ and $\delta\geq0$ which are feasible for \eqref{eq:system} can be lifted to a solution of \eqref{eq:system_a} and \eqref{eq:system_b}.
  That is to say, we can set $\gamma_a=0$ and $\gamma_b=\gamma$, and the conditions in \eqref{eq:system_a} and \eqref{eq:system_b} yield a point $x\in\bisector_{(F,G)}(a,b)$.
\end{proof}

\begin{proposition}
  \label{prop:F+G+faces}
  Let $F$ and $G$ be a fixed pair of faces of $\ball{d}$.
  Then the set
  \begin{equation}\label{eq:F+G+faces}
    C \ := \ \SetOf{b-a}{a,b\in\torus{d+1} \text{ with } \bisector_{(F,G)}(a,b) \neq \emptyset}
  \end{equation}
  is both a polyhedral cone and a tropical cone, although perhaps not a tropical polyhedral cone.
\end{proposition}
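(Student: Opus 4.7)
The plan is to read the proposition off of Lemma~\ref{lem:F+G+faces}. Define
\[
\tilde C\ :=\ \SetOf{(w,\gamma,\delta)\in\R^{d+1}\times\R\times\R_{\geq 0}}{(w,\gamma,\delta)\text{ satisfies }\eqref{eq:system}}.
\]
Every condition in \eqref{eq:system}, together with $\delta\geq 0$, is a homogeneous linear equation or inequality in the joint variables $(w,\gamma,\delta)$, so $\tilde C$ is a polyhedral cone in $\R^{d+3}$. By Lemma~\ref{lem:F+G+faces}, $C$ is exactly the image of $\tilde C$ under the coordinate projection $(w,\gamma,\delta)\mapsto w$. Since projections of polyhedral cones are polyhedral cones (Fourier--Motzkin elimination), this settles the first half of the claim.

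For the tropical part I would first observe that tropical scaling comes for free: the map $(w,\gamma,\delta)\mapsto(w+\lambda\ones,\gamma+\lambda,\delta)$ is an automorphism of $\tilde C$ for every $\lambda\in\R$, so $C+\R\ones=C$. Closure under tropical addition is the real work. Given $u,v\in C$ with witnesses $(\gamma_u,\delta_u)$ and $(\gamma_v,\delta_v)$, I would tropically rescale $v$ by a carefully chosen $s\in\R$ and then assemble a new witness $(\gamma,\delta)$ for $w:=\max(u,v+s\ones)$. The constraints attached to $L_+$, $L_-$ and $L_*$ transfer through componentwise max without issue, since two-sided interval memberships are preserved by max.

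The main obstacle is the triple of equality constraints associated with $L_0$, $L_{-1}$ and $L_{+1}$: the equalities $w_i=\gamma\pm\delta$ on $L_{\pm 1}$ and $w_i=\gamma$ on $L_0$ are preserved only when the two summands are suitably aligned, and this is precisely what the free parameter $s$ is for. Choosing $s$ to match the $L_0$-values of $u$ and $v+s\ones$ and then picking $\delta$ to reconcile the $L_{-1}$- and $L_{+1}$-values is the key manipulation; the remaining conditions on $L_+$, $L_-$ and $L_*$ follow for free from the interval preservation property of max. The same equalities explain the caveat in the statement: when $L_0$, $L_{-1}$ and $L_{+1}$ are simultaneously non-empty, $C$ satisfies honest linear equations such as $w_i+w_j=2 w_k$, which are not tropical halfspaces, so $C$ need not be a tropical polyhedral cone.
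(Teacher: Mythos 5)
Your polyhedral-cone argument is correct and is essentially the paper's: both read \eqref{eq:system} as a homogeneous linear system in the joint variables $(b-a,\gamma,\delta)$ with $\delta\ge 0$ and observe that $C$ is its coordinate projection; the paper phrases closure under ordinary sums via the witness $(\gamma+\gamma',\delta+\delta')$, which carries the same content as your Fourier--Motzkin elimination.

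The tropical half has a real gap, and in fact the assertion itself appears to fail for non-facet $F,G$. First, the quantifier is inverted: to be a tropical cone, $C$ must contain $\max(u,v+s\ones)$ for \emph{every} $s\in\R$ (since $C+\R\ones=C$), not for one carefully chosen $s$; tuning a single $s$ only shows that $C$ meets the tropical segment from $u$ to $v$, not that it contains it. Second, even granting a free choice of $s$, the reconciliation of the $L_{-1}$- and $L_{+1}$-equalities you gesture at does not exist when $L_0$, $L_{+1}$, $L_{-1}$ are all non-empty: then $(\gamma,\delta)$ is determined by $b-a$, so $C$ lies in the affine plane $2w_i=w_j+w_k$ (for $i\in L_0$, $j\in L_{+1}$, $k\in L_{-1}$), which componentwise $\max$ does not preserve. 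Concretely, take $d=2$ and the vertices $F,G$ of $\ball{2}$ with $(F_-,F_*,F_+)=(\{1,3\},\emptyset,\{2\})$ and $(G_-,G_*,G_+)=(\{2,3\},\emptyset,\{1\})$; then $L_0=\{3\}$, $L_{+1}=\{2\}$, $L_{-1}=\{1\}$, and $C=\{(\gamma-\delta,\gamma+\delta,\gamma):\gamma\in\R,\ \delta\ge 0\}$ is a ray. Both $(-1,1,0)$ and $(-2,2,0)$ lie in $C$, but $\max((-1,1,0),(-2,2,0))=(-1,2,0)$ does not, and one checks $\max((-1,1,0),(-2,2,0)+s\ones)\notin C$ for every $s$ strictly between $-1$ and $1$. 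The paper's own proof shares this soft spot: it proposes the witness $(\max(\gamma,\gamma'),\max(\delta,\delta'))$ and verifies only the $L_0$-equality, but the $L_{-1}$-equality fails for that witness in the same example. When $F$ and $G$ are facets the labels $L_0$, $L_{+1}$, $L_{-1}$ cannot all be non-empty, so this particular obstruction disappears; for lower-dimensional faces, the tropical-cone claim needs a restriction or a genuinely different argument, both in your write-up and in the paper.
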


\begin{proof}
  Let $a,a',b,b'\in\torus{d+1}$ with $\bisector_{(F,G)}(a,b)$ and $\bisector_{(F,G)}(a',b')$ both non-empty.
  Since $\bisector_{(F,G)}(a,b)\neq\emptyset$, by Lemma~\ref{lem:F+G+faces}, there are scalars $\gamma$ and $\delta$ satisfying the conditions \eqref{eq:system}.
  Likewise there are certificates $\gamma'$ and $\delta'$ for $\bisector_{(F,G)}(a',b')\neq\emptyset$.
  By linearity of the conditions \eqref{eq:system} it follows that $\gamma+\gamma'$ and $\delta+\delta'$ certify that $\bisector_{(F,G)}(a+a',b+b')\neq\emptyset$:
  for instance, we have $(b+b'-a-a')_i = \gamma + \gamma'$ for $i \in L_0 = (F_-\cap G_-)\cup (F_+\cap G_+)$.
  Since clearly $\alpha c \in C$ for all $c\in C$ and $\alpha\geq 0$ we conclude that $C$ is an ordinary cone.
  This cone is polyhedral as it is defined in terms of the finitely many linear conditions \eqref{eq:system}.

  A similar argument shows that $C$ is also closed with respect to taking arbitrary $(\max,+)$-linear combinations:
  for instance, with the above notation we have $\max((b-a)_i,(b'-a')_i) = \max(\gamma,\gamma')$ for $i \in L_0$.
  This shows that $C$ is a tropical cone.
\end{proof}

\begin{corollary}
  The bisection fan of tropical bisectors is a classical polyhedral fan, and a tropical (perhaps not tropical polyhedral) fan.
\end{corollary}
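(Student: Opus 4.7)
The plan is to derive the corollary directly from Proposition~\ref{prop:F+G+faces} by exhibiting $\ccF$ as the stratification of $\torus{d+1}$ by the finite arrangement of cones $\{C_{(F,G)}\}$, indexed over pairs of faces of $\ball{d}$. The key observation is that the combinatorial type of $\bisector(0,v)$ is encoded by the finite combinatorial datum
\[
T(v) \ := \ \SetOf{(F,G)}{\bisector_{(F,G)}(0,v) \neq \emptyset} \enspace,
\]
so that two vectors $v,v'$ lie in the same (relatively open) cone of $\ccF$ if and only if $T(v)=T(v')$. Once this is set up, both the polyhedral and tropical conclusions reduce to closure properties of polyhedral cones and tropical cones under finite intersection.

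To justify this reduction, I would first appeal to Proposition~\ref{prop:polyhedra-bisectors}: it describes $\bisector(0,v)$ as a polyhedral complex whose cells are indexed by pairs $(F,G)\in\Fan(\ball{d})\times\Fan(\ball{d})$ with face incidences inherited from the inclusion order on $\Fan(\ball{d})\times\Fan(\ball{d})$, a datum which does not depend on $v$. Hence the combinatorial type of $\bisector(0,v)$ is recovered from $T(v)$ alone. Moreover, by the definition~\eqref{eq:F+G+faces} of $C_{(F,G)}$, setting $a=0$ and $b=v$ gives the identity $\smallSetOf{v}{(F,G)\in T(v)} = C_{(F,G)}$. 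Consequently each relatively open cell of $\ccF$ is a stratum of the arrangement $\{C_{(F,G)}\}$, and its closure equals $\bigcap_{(F,G) \in T(v)} C_{(F,G)}$ for a generic $v$ in that stratum.

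Both assertions of the corollary now follow. For the classical polyhedral fan structure, each closed cone of $\ccF$ is a finite intersection of classical polyhedral cones (by Proposition~\ref{prop:F+G+faces}) and therefore polyhedral; the stratification of $\torus{d+1}$ by a finite arrangement of polyhedral cones is automatically a polyhedral fan, since intersections of cones in the arrangement are faces of each. For the tropical structure, each $C_{(F,G)}$ is a tropical cone by Proposition~\ref{prop:F+G+faces}, and intersections of tropical cones are tropical cones, because closure under tropical $(\max,+)$-combinations is preserved under intersection. The qualifier \enquote{perhaps not tropical polyhedral} simply reflects that such an intersection need not admit a finite tropical generating set.

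The main subtle point is the first step: confirming that $T(v)$ really does determine the whole combinatorial type of $\bisector(0,v)$, i.e.\ that no additional information (dimensions of cells, face poset subtleties coming from degenerate coincidences) is needed. This amounts to reading Proposition~\ref{prop:polyhedra-bisectors} carefully and noting that the cells and their incidences are parameterized by a $v$-independent combinatorial structure, so that $T(v)$ captures everything one might want for the normal equivalence class.
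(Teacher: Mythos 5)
Your argument takes essentially the same route as the paper: extract from Proposition~\ref{prop:F+G+faces} that each feasibility cone $C_{(F,G)}$ is simultaneously a classical polyhedral cone and a tropical cone, observe that the closed cone of $\ccF$ containing a generic $v$ is the intersection $\bigcap_{(F,G)\in T(v)} C_{(F,G)}$, and conclude by noting that both classes of cones are closed under finite intersection. The extra scaffolding you supply (the combinatorial datum $T(v)$, the appeal to Proposition~\ref{prop:polyhedra-bisectors} for the $v$-independence of the cell poset) is a reasonable elaboration of what the paper leaves implicit.

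One statement in your proof is simply false as written: \enquote{the stratification of $\torus{d+1}$ by a finite arrangement of polyhedral cones is automatically a polyhedral fan, since intersections of cones in the arrangement are faces of each.} Intersections of polyhedral cones need not be faces of either: in $\R^2$, take $C_1=\smallSetOf{(x,y)}{x\geq 0}$ and $C_2=\smallSetOf{(x,y)}{y\geq 0}$; their intersection is the first quadrant, which is not a face of either half-plane. What actually makes the common refinement of the $C_{(F,G)}$ into a fan is that each $C_{(F,G)}$ is cut out by hyperplanes from a fixed finite arrangement (obtained from the linear conditions in Lemma~\ref{lem:F+G+faces} by Fourier--Motzkin), so the refinement is a subarrangement fan. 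That justification is the missing step; the paper sidesteps it by deferring the structural identification $\ccF=\bop^d$ to Theorem~\ref{thm:ccfan}. Beyond this inaccuracy, your line of reasoning and the paper's coincide.
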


\begin{proof}
  We know that the feasibility region of a face $(F,G)$ is a tropical and classical polyhedral cone. Finite intersections of these cones are again tropical cones, classical cones, and polyhedral cones. Therefore, the feasibility region of a normal equivalence class, which is the intersection of the cones of its non-empty faces, is again a tropical and classical polyhedral cone.
  Hence, the whole fan has this structure.
\end{proof}

The following shows one direction of Theorem~\ref{thm:ccfan}, namely, $\ccF$ is coarser than $\bop^{d}$.

\begin{lemma}
\label{lem:ccfan-part1}
   Let $F,G \in \Fan(\ball{d})$. Then, whether $\bisector_{(F,G)}(a,b)$ is empty or not, for each $a,b \in \torus{d+1}$ depends only on the bisected ordered partition of $b-a$.
\end{lemma}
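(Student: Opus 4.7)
My plan is to unwind the statement using Lemma~\ref{lem:F+G+faces}, which reduces nonemptiness of $\bisector_{(F,G)}(a,b)$ to feasibility of the linear system~\eqref{eq:system} in variables $(\gamma,\delta)\in\R\times[0,\infty)$, with right-hand side depending only on $v:=b-a$. I then need to show that this feasibility depends on $v$ only through its bisected ordered partition. To do so I will substitute $m:=\gamma-\delta$ and $M:=\gamma+\delta$, so that the system reads
\[
  v_i=m \text{ on } L_{-1},\quad v_i=M \text{ on } L_{+1},\quad v_i=\tfrac{m+M}{2} \text{ on } L_0,
\]
with $v_i\in[m,\tfrac{m+M}{2}]$, $[\tfrac{m+M}{2},M]$, $[m,M]$ on $L_-,L_+,L_*$ respectively, and $m\le M$. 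A first observation is that any feasible $(m,M)$ satisfies $m\le\min v$ and $M\ge\max v$, with equality forced on the left whenever $L_{-1}\ne\emptyset$ and on the right whenever $L_{+1}\ne\emptyset$.

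Next I will split on the emptiness of $L_{-1}$ and $L_{+1}$. In the generic case where both are nonempty the system pins $m=\min v$, $M=\max v$ and $(m+M)/2=\midv(v)$, so feasibility becomes the conjunction of the conditions that $L_{-1}$ lies in the smallest part of the ordered partition, $L_{+1}$ in the largest, $L_0$ (if nonempty) in a part whose value equals $\midv(v)$, every index in $L_-$ in a part of value at most $\midv(v)$, and every index in $L_+$ in one of value at least $\midv(v)$. Each of these is manifestly recorded by the bisected ordered partition. In the remaining cases there is slack in $m$ and/or $M$, which I eliminate by Fourier--Motzkin: for instance, when $L_{-1}=\emptyset$ and $L_{+1}\ne\emptyset$, the equality on $L_0$ either pins $m$ from the common $L_0$-value and $\max v$, or leaves $m$ free in an interval, and the surviving constraints become inequalities such as $\max_{i\in L_-}v_i\le\min_{i\in L_+}v_i$ and $\max_{i\in L_-}v_i\le\midv(v)$.

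The main bookkeeping obstacle is checking that the inequalities produced after Fourier--Motzkin elimination, which a priori involve combinations like $2v_{L_0}-\max v$ or $2\max_{i\in L_-}v_i-\max v$, always simplify to comparisons intrinsic to the bisected ordered partition, that is, to comparisons of the form ``$v_i$ versus $v_j$'' or ``$v_i$ versus $\midv(v)$''. Modulo the $L_{-1}\leftrightarrow L_{+1}$ symmetry only two further subcases must be inspected, and each collapses after multiplying by $2$ and cancelling $\max v$ or $\min v$ into a bisected-ordered-partition comparison. Once this verification is complete, feasibility of~\eqref{eq:system} is expressed as a Boolean combination of comparisons that the bisected ordered partition of $v$ records explicitly, which proves the lemma.
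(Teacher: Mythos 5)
Your proposal follows exactly the paper's route: reduce to feasibility of the linear system from Lemma~\ref{lem:F+G+faces}, observe that any feasible $(\gamma-\delta,\gamma+\delta)$ is bounded by $\min(b-a)$ and $\max(b-a)$ with equality forced on each side precisely when $L_{-1}$ resp.\ $L_{+1}$ is nonempty, and then split into the same three cases (both of $L_{\pm1}$ empty, exactly one nonempty, both nonempty) to check that the surviving conditions are read off the bisected ordered partition. The $m=\gamma-\delta$, $M=\gamma+\delta$ substitution is a cosmetic variant of the paper's direct manipulation of $\gamma,\delta$, and the Fourier--Motzkin collapse you describe produces the paper's Claims I--III verbatim.
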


\begin{proof}
  Consider the partition of $[d+1]$ into six sets $L_0,L_+,L_-,L_{+1},L_{-1},L_*$ defined in \eqref{eq:labels}.
  We want to show that feasibility of the system \eqref{eq:system} for a given $a$ and $b$ depends only on the bisected ordered partition of $b-a$.
  Without loss of generality we assume $a=0$ and $b_1 \leq b_2 \leq \dots \leq b_{d+1}$ as in \eqref{eq:sorting}.

  Let $\midv(b):= \tfrac{1}{2}(b_{d+1} + b_1)= \tfrac{1}{2}\dist(0,b) + b_1$ be the midvalue of $b-0$  and $m=\max(0+\midv(b)\ones,b)$ the midpoint of the segment $[0, b]$. In particular, $\dist(0,m)=\dist(m,b)=\tfrac{1}{2}\dist(0,b)$.

  We distinguish three cases, depending on whether both, none, or exactly one of $L_{+1}$ and $L_{-1}$ are empty.

\textbf{Claim I:} \emph{Suppose $L_{+1}\cup L_{-1}=\emptyset$. Then, \eqref{eq:system} is feasible if and only if there are $k\leq\ell$ with
  \begin{equation}
    \label{eq:claim1}
    \tag{I.1}
    L_-\subseteq\{1,\dots,k\} \,,\ L_0\subseteq\{k+1,\dots,\ell\} \,,\ L_+\subseteq\{\ell+1,\dots,d+1\} \enspace.
  \end{equation}}
  Indeed, in this case feasibility of \eqref{eq:system} is equivalent to feasibility of
  \begin{equation}\label{eq:midpoint:unbounded}
    \begin{cases}
      \gamma\geq b_i & \text{for } i\in L_-\enspace, \\
      \gamma = b_i & \text{for } i\in L_0\enspace,\\
      \gamma \leq b_i  &\text{for } i\in L_+\enspace,
    \end{cases}
  \end{equation}
  which implies the ordered partition to satisfy \eqref{eq:claim1}. Conversely, if the ordered partition satisfies  \eqref{eq:claim1}, then let $\gamma$ be chosen to satisfy \eqref{eq:midpoint:unbounded} and let
  $\delta \geq \min(\gamma-b_1,b_{d+1}-\gamma)$. This yields a feasible solution to \eqref{eq:system}.
  In other words, in this case we can tell if $C$ is empty or not by just looking at the the ordered partition of  $b$; the relative position of the midpoint is irrelevant.

  \textbf{Claim II:} \emph{Suppose $L_{+1}\neq \emptyset = L_{-1}$. Then, \eqref{eq:system} is feasible if and only if, in addition to \eqref{eq:claim1}, we have
    \begin{align}
        \label{eq:claim2.1} \tag{II.1}
        \SetOf{b_i}{i\in L_{+1}} =\{b_{d+1}\}\enspace,\\
        \label{eq:claim2.2} \tag{II.2}
        b_i \leq \midv(b) \text{ for } i\in L_{0}\cup L_- \enspace, \\
        \label{eq:claim2.3}\tag{II.3}
        | \SetOf{b_i}{i\in L_{0}}| \leq 1\enspace.
    \end{align}}
  If \eqref{eq:claim1}, \eqref{eq:claim2.1}, \eqref{eq:claim2.2}, and \eqref{eq:claim2.3} hold, take $\gamma= \max\SetOf{b_i}{i \in L_0\cup L_-\cup \{1\} }$ and $\delta= b_{d+1}-\gamma$.

  Conversely, if $(\gamma,\delta)$ is feasible for \eqref{eq:system} then $\gamma+\delta=b_i=b_{d+1}$ for all $i\in L_{+1}$.
  In particular, $\bisector_{(F,G)}(a,b)$ is empty unless $\smallSetOf{b_i}{i\in L_{+1}}=\{b_{d+1}\}$ is a singleton.
  Since the coefficients of $b$ are in ascending order, it follows that
  \begin{equation}\label{eq:midpoint:right}
    \gamma+\delta \ = \ b_{d+1}
  \end{equation}
  and $b_i=b_{d+1}$ for all $i \in L_{+1}$. This shows that \eqref{eq:claim2.1} holds.
  Now the constraints of \eqref{eq:system} translate into \eqref{eq:midpoint:unbounded} as in the previous case, which implies \eqref{eq:claim1}. Additionally
  \begin{equation}\label{eq:midpoint:left}
    \gamma-\delta \ \leq \ b_1 \enspace .
  \end{equation}
  Adding \eqref{eq:midpoint:right} and \eqref{eq:midpoint:left} now yields
  \[
    \gamma \ \leq \ \tfrac{1}{2} (b_1+b_{d+1}) \ = \ \midv(b) \enspace ,
  \]
  which, by \eqref{eq:midpoint:unbounded}, gives \eqref{eq:claim2.2}.
  Finally, \eqref{eq:claim2.3} follows from the fact that the only possible value in $\SetOf{b_i}{i\in L_{0}}$ is $\gamma$.

  The case where $L_{-1}\neq \emptyset$ and $L_{+1}=\emptyset$ is analogous.

  \textbf{Claim III:} \emph{Suppose $L_{+1}\neq \emptyset \neq L_{-1}$. Then, \eqref{eq:system} is feasible if and only if
  \begin{align}
    \label{eq:claim3.1} \tag{III.1}
&\SetOf{b_i}{i\in L_{-1}} =\{b_{1}\} \,,\
\SetOf{b_i}{i\in L_{+1}} =\{b_{d+1}\}\enspace,\\
    \label{eq:claim3.2} \tag{III.2}
&\  b_i \leq \mu(b) \text{ for } i\in  L_- \,,\
 b_i = \mu(b) \text{ for } i\in L_{0} \,,\
 b_i \geq \mu(b) \text{ for } i\in  L_+\enspace.
  \end{align}
  }
  Indeed, in this case the only candidate solution for \eqref{eq:system} is $\gamma= \mu(b) =(b_{d+1}+b_1)/2$ and $\delta=(b_{d+1}-b_1)/2$.
  This is a solution or not depending only on whether Equations~\eqref{eq:claim3.1} and~\eqref{eq:claim3.2} are satisfied.
\end{proof}

The following gives the other direction of Theorem~\ref{thm:ccfan}: $\ccF$ refines $\bop^{d}$.

\begin{lemma}
\label{lem:ccfan-part2}
Let $a,b,a',b'\in\torus{d+1}$ be two pairs of points.
If the bisected ordered partitions of $b-a$ and $b'-a'$ are not the same, then there is a pair of faces $F,G\in\Fan(\balld)$ such that exactly one of $\bisector_{F,G}(a,b)$ or $\bisector_{F,G}(a',b')$ is empty.
\end{lemma}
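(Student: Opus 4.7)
The plan is to construct, for any $v:=b-a$ and $v':=b'-a'$ with distinct bisected ordered partitions, an explicit pair of faces $(F,G)\in\Fan(\balld)^2$ distinguishing them via Lemma~\ref{lem:F+G+faces}. I will specify each $(F,G)$ by its label partition $(L_{-1},L_{+1},L_0,L_-,L_+,L_*)$ from \eqref{eq:labels}; realizability of a given label partition reduces to splitting each of $L_0,L_-,L_+$ between the two sides of the $3\times 3$ intersection matrix $F_\sigma\cap G_\tau$ while keeping $F_-,F_+,G_-,G_+$ all non-empty, which is automatic whenever both $L_{-1}$ and $L_{+1}$ are non-empty and is a short finite check otherwise.

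I will proceed by case analysis on how the two bisected ordered partitions differ. Since each is the ordered partition of the augmented vector $(v,\midv(v))\in\R^{d+2}$, the disagreement gives a pair of indices in $[d+2]$ whose sign relation differs. I consider three cases:
\textbf{(A)} the min- or max-position sets $M^v_{\min},M^v_{\max}\subseteq[d+1]$ differ from $M^{v'}_{\min},M^{v'}_{\max}$;
\textbf{(B)} these coincide but some non-extremal index $i$ has a different $\midv$-relation between $v$ and $v'$; and
\textbf{(C)} both the min/max sets and the full $\midv$-profile coincide, but some pair $i,j$, both in the open bucket $M^v_{<\midv}$ or both in $M^v_{>\midv}$, has a different relative order or equality.

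In Case~A, picking $i_0\in M_{\min}^v\setminus M_{\min}^{v'}$ and any $j_0\in M_{\max}^v$, the labeling $L_{-1}=\{i_0\}$, $L_{+1}=\{j_0\}$, $L_*=[d+1]\setminus\{i_0,j_0\}$ (other labels empty) has the Case~III feasibility conditions $v_{i_0}=v_{\min}$ and $v_{j_0}=v_{\max}$, satisfied by $v$ but not by $v'$. In Case~B, fix $k_-\in M_{\min},k_+\in M_{\max}$ and place $i$ into whichever of $L_-$, $L_0$, $L_+$ yields a Case~III condition on $v_i$ versus $\midv(v)$ separating $v$ from $v'$; a short check shows that for any two differing $\midv$-relations, one of these three labels does the job. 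In Case~C with $v_i<v_j$ but $v'_i>v'_j$, take $L_-=\{i,k_-\}$, $L_+=\{j,k_+\}$, $L_{-1}=L_{+1}=L_0=\emptyset$; realizability follows from splitting the two-element labels evenly between the two sides, and Case~I of the proof of Lemma~\ref{lem:ccfan-part1} reduces feasibility to $\max(v_i,v_{\min})\le\min(v_j,v_{\max})$, i.e.\ $v_i\le v_j$, which distinguishes $v$ from $v'$. In the equality subcase $v_i<v_j$ but $v'_i=v'_j$, take $L_0=\{i,j\}$ together with a singleton $L_{+1}\subseteq M_{\max}^v$ (respectively $L_{-1}\subseteq M_{\min}^v$) according as $i,j\in M_{<\midv}$ or $i,j\in M_{>\midv}$; Case~II then requires $v_i=v_j$, which distinguishes.

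The main technical obstacle will be carrying out the realizability and feasibility checks in the cases where one or both of $L_{-1},L_{+1}$ are empty, since this is precisely where the $3\times 3$ splitting constraints bite: for instance, Case~C requires the auxiliary indices $k_\pm$ to enlarge $L_-,L_+$ to size two so that the needed $2$-to-$2$ split exists, and a parallel trick lets the equality subcase realize $L_0$ split as $\{i\}\mid\{j\}$. Minor edge cases, such as $v=\mathbf{0}$ (whose trivial bisected ordered partition makes every face pair feasible, so that any infeasible face pair for the non-trivial $v'$ suffices), are handled separately.
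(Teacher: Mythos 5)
Your proof is correct and, at bottom, uses the same mechanism as the paper — a case analysis on how the two bisected ordered partitions differ, with each case resolved by exhibiting an explicit face pair $(F,G)$ whose labeling partition, via Lemma~\ref{lem:F+G+faces} and the Claims in the proof of Lemma~\ref{lem:ccfan-part1}, yields a feasibility condition that holds for $v:=b-a$ but not for $v':=b'-a'$. The route differs in two ways. First, you reorder the paper's Cases II (same extrema, different ordered partition) and III (same ordered partition, different midvalue profile) as your Cases C and B respectively, and you explicitly note that once the extrema and the midvalue profile coincide, only pairs within a single midvalue-bucket can have a flipped relative order — a slightly finer decomposition that is nonetheless exhaustive for the same reasons. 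Second, in Case A you take $L_{-1},L_{+1}$ to be singletons rather than the full sets $M_{\min},M_{\max}$ as the paper does; both choices realize valid face pairs and both make condition (III.1) fail for exactly one of $v,v'$. Your Case B then works via Claim III with $\{i\}$ slotted into $L_-$, $L_0$, or $L_+$, whereas the paper's Case III reaches the same conclusion via Claim II with $L_{-1}=\{1\}$ and $L_+=\{d+1,i\}$; either way the decisive inequality is the midvalue comparison. Two small gaps you should patch: in Case A you need the symmetric alternative $i_0\in M_{\min}^{v'}\setminus M_{\min}^v$ (or a maximizer-side analogue) when $M_{\min}^v\subseteq M_{\min}^{v'}$, which is fine since the statement is symmetric in the two pairs; and the constant-vector edge case $v=0$, $v'\ne 0$ should be folded into Case A (a non-constant $v'$ has $M_{\min}^{v'}\ne M_{\max}^{v'}\ne[d+1]$, so the min/max sets differ) rather than treated as a separate side remark. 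With those patched, your argument is a valid, modestly streamlined variant of the paper's proof.
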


\begin{proof}
  As before, we assume without loss of generality that $a=a'=0$.
  We know that the bisected ordered partitions of $b$ and $b'$ are different.
  Our goal is to find  a pair of faces $(F,G)$ that lies in one and only one of the bisectors.
  We do this in three cases, depending on the difference between the bisected ordered partitions

  \textbf{Case I:} \emph{Suppose that
  \begin{align*}
  \SetOf{i\in [d+1]}{b_i = \max_j b_j} \ &\neq \ \SetOf{i\in [d+1]}{b'_i = \max_j b'_j}
  \text{ or } \\
   \SetOf{i\in [d+1]}{b_i = \min_j b_j} \ &\neq \  \SetOf{i\in [d+1]}{b'_i = \min_j b'_j} \enspace.
  \end{align*}
  }
Without loss of generality, there is an index $i$ such that $b_i$ is maximum and $b'_i$ is not,
or $b_i$ is minimum and $b'_i$ is not.
Let $F$ be the face with
\[
F_+ \ = \ \SetOf{i\in [d+1]}{b_i = \max_j b_j} \ , \qquad
F_- \ = \ \SetOf{i\in [d+1]}{b_i = \min_j b_j} \ ,
\]
and let $G=-F$. This choice makes
\[
L_{+1}= F_+\enspace, \qquad
L_{-1}=F_-\enspace, \qquad
L_{*}=F_*\enspace, \qquad
L_{-}=L_{+}=L_0=\emptyset \enspace.
\]
The cell $\bisector_{F,G}(a,b)$ is not empty by lemma~\ref{lem:F+G+faces}, since the following is a solution for  \eqref{eq:system}:
  \[
    \begin{aligned}
      \gamma \ & = \ \frac{1}{2}(\max_{i\in [d+1]} b_i + \min_{i\in [d+1]}b_i) \enspace, \\
      \delta \ & = \ \frac{1}{2}(\max_{i\in [d+1]} a_i - \min_{i\in [d+1]} b_i) \ = \ \frac{1}{2}\dist(a,b) \enspace.
    \end{aligned}
  \]
 However, $\bisector_{(F,G)}(a',b')$ is empty: in order for it not to be empty we would need
  \[
  \SetOf{i\in [d+1]}{b'_i = \min_j b'_j} \ \subset \ F_- \ ,
  \qquad
  \SetOf{i\in [d+1]}{b'_i = \max_j b'_j} \ \subset \ F_+ \ .
  \]

    \textbf{Case II:} \emph{Suppose that
  $b$ and $b'$ have exactly the same maxima and minima but the ordered partitions of $b$ and $b'$ do not coincide.} That is,
 there is a pair of indices, $i,j\in[d+1]\setminus\{1,d+1\}$ such that $b_i\geq b_j$ but $b'_i< b'_j$.

 We may assume that $1$ and $d+1$ are a minimum and a maximum, respectively, of both $b$ and $b'$.
 Let $F$ be the face with $F_+=\{d+1\}$ and $F_-=\{1\}$. Let $G$ be the face with
 $G_+=\{i\}$, and $G_-=\{j\}$.
 Then, \eqref{eq:labels} gives
 \[
   \begin{array}{ll}
     L_+=\{d+1,j\} \,, & L_-=\{1, i\} \,, \\
     L_*=[d+1]\setminus (L_+\cup L_-) \,, & L_{-1}=L_{+1}=L_0=\emptyset \enspace.
   \end{array}
 \]
 Then, $\bisector_{F,G}(a,b)$ is not empty since $\gamma=(b_i+b_j)/2$, and $\delta= \dist(a,b)$ is a solution of \eqref{eq:system}.
 However, the system for $b'$ is infeasible, since $b'_i<b'_j$. Thus, $\bisector_{F,G}(a',b')$ is empty.

\textbf{Case III:} \emph{Suppose that
  $b$ and $b'$ have exactly the same maxima and minima and the same ordered partitions but the midvalue does not coincide.}

   As before, we assume without loss of generality that $1$ and $d+1$ are a minimum and a maximum, respectively, of both $b$ and $b'$.
  Then, there is an index $i\in [d+1]\setminus \{1,d+1\}$ such that $\midv(b)  \leq b_i$ but $\midv(b') > b'_i$ (or vice-versa, but that would give an equivalent case).

  In this case, we let $F$ and $G$ be the faces with $F_+=\{d+1\}$, $F_-=\{1\}$, $G_+=\{1\}$ and $G_-=\{i\}$.
  These faces produce
  \[
   \begin{array}{ll}
     L_+=\{d+1, i\} \,, & L_{-1}=\{1\} \,, \\
     L_*=[d+1]\setminus (L_+\cup L_{-1}) \,, & L_{-}=L_{+1}=L_0=\emptyset \enspace.
   \end{array}
 \]
  Then, $\bisector_{F,G}(a,b)$ is not empty since $\gamma=\midv(b)$, $\delta= \dist(a,b)/2$ is a solution of  \eqref{eq:system} for $b$

  However, the system for $b'$ is infeasible. This is because \eqref{eq:system} specifies that $\gamma+\delta \geq b'_{d+1}$, and $\gamma-\delta = b'_1$.
  Adding them together and dividing by two we get $\gamma \geq (b'_1+b'_{d+1})/2= \midv(b')$. We also need by \eqref{eq:system} and $i\in L_+$  that $\gamma\leq b'_i$.
  Then, $\midv(b)\leq b'_i$, which contradicts our assumption.
\end{proof}

\begin{corollary}
  \label{cor:d^4}
  The tropical bisector of two points in general position has $\Theta(d^4)$ maximal cells.
\end{corollary}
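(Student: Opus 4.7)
The plan is to prove matching upper and lower bounds, both of order $d^4$.

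For the upper bound, I would appeal to Proposition~\ref{prop:hypersurface} together with Proposition~\ref{prop:polyhedra-bisectors}: each maximal cell of $\bisector(a,b)$ arises as $\bisector_{(F,G)}(a,b)$ for a unique pair of facets $F,G\in\Fan(\ball{d})$. Since $\ball{d}$ has $d(d+1)$ facets, indexed by the ordered pairs of distinct elements of $[d+1]$, there are at most $(d(d+1))^2 = O(d^4)$ such pairs, and hence at most $O(d^4)$ maximal cells.

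For the lower bound I would invoke Theorem~\ref{thm:ccfan} to reduce to a convenient representative inside a maximal cone of the bisection fan $\bop^d$. After relabeling coordinates, I may assume that $b_i-a_i$ is strictly increasing in $i$ and that $\mu(b-a)$ lies strictly between two consecutive values. Encode each facet as $F=(i_F,j_F)$, where $i_F$ and $j_F$ are the minimizing and maximizing coordinates on $F$. I then restrict attention to pairs $(F,G)$ for which the four indices $i_F,j_F,i_G,j_G$ are all distinct. For these the labeling of \eqref{eq:labels} simplifies to $L_0=L_{+1}=L_{-1}=\emptyset$, $L_-=\{i_F,j_G\}$, $L_+=\{j_F,i_G\}$, so case~I of Lemma~\ref{lem:ccfan-part1} applies and condition \eqref{eq:claim1} reduces to $\max(i_F,j_G)<\min(j_F,i_G)$, i.e., to the four inequalities $i_F<j_F$, $i_F<i_G$, $j_G<j_F$ and $j_G<i_G$.

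A direct enumeration then shows that for each $4$-subset $\{p_1<p_2<p_3<p_4\}\subseteq[d+1]$, exactly four of the twenty-four assignments of these values to $(i_F,j_F,i_G,j_G)$ satisfy all four inequalities, for instance $(p_1,p_3,p_4,p_2)$, $(p_1,p_4,p_3,p_2)$, $(p_2,p_3,p_4,p_1)$ and $(p_2,p_4,p_3,p_1)$. This produces at least $4\binom{d+1}{4}=\Omega(d^4)$ non-empty cells. By Corollary~\ref{coro:pure} each such cell has dimension $d-1$ and is therefore maximal, and distinct pairs $(F,G)$ yield distinct cells because for a point $x$ in the relative interior of such a cell, the pair $(F,G)$ is recovered as the pair of facets of $a+\partial\ball{d}$ and $b+\partial\ball{d}$ touched by the tropical ball centered at $x$.

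The main obstacle will be choosing the right combinatorial regime; case~I of Lemma~\ref{lem:ccfan-part1} with all four indices distinct is the natural choice because it eliminates the special roles played by $L_{\pm 1}$ and reduces non-emptiness to a clean order condition. Cases~II and~III of that lemma would contribute only $O(d^3)$ extra cells, since the coincidence $j_F=i_G$ or $i_F=j_G$ reduces the degrees of freedom by one, so they may safely be ignored for the purpose of this asymptotic lower bound.
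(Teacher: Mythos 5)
Your proof is correct and follows essentially the same route as the paper: the trivial $O(d^4)$ upper bound from counting facet pairs, and the lower bound by exhibiting $4\binom{d+1}{4}$ nonempty cells $\bisector_{(F,G)}(a,b)$ where the four min/max indices of $F,G$ are pairwise distinct and interleaved so that Claim I of Lemma~\ref{lem:ccfan-part1} applies. One small imprecision: the fact that each nonempty $\bisector_{(F,G)}(a,b)$ with $F,G$ maximal has dimension $d-1$ is not quite what Corollary~\ref{coro:pure} asserts (that is a statement about the whole bisector being pure); it follows instead from general position via Theorem~\ref{thm:genpos}, conditions (a)--(b), which force $Q\cap H$ to have dimension $d-1$ — after which purity does tell you the cell is maximal.
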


\begin{proof}
  The upper bound is trivial, each maximal cell corresponds to a choice of a pair of facets $(F,G)$ from the tropical ball.
  For the lower bound, assume without loss of generality that $a=0$ and $b_1< b_2 < \dots < b_{d+1}$.
  Then, for each choice of $i,j,k,\ell \in \{1,\dots,d+1\}$, all different and with $\max\{j,\ell\}<\min\{i,k\}$, let $F_+=\{i\}$,  $F_-=\{j\}$,  $G_+=\{k\}$,  $G_-=\{\ell\}$.
  By Claim 1 in the proof of Lemma~\ref{lem:ccfan-part1} the set $\bisector_{(F,G)}(a,b)$ is not empty.
  Since $a$ and $b$ are in general position and $F$ and $G$ are facets of $\ball{d}$, we have $\dim\bisector_{(F,G)}(a,b)=d-1$.
  There are $4\binom{d+1}{4}$ ways of choosing such $\{i,j,k,\ell\}$.
\end{proof}

\subsection{The structure of tropical Voronoi regions}
\label{sec:regions}
A \emph{polytrope} is an ordinary polytope which is also convex in the tropical sense (with respect to $\min$ and $\max$ simultaneously); see \cite{JoswigKulas:2008}.
These are precisely the ordinary polytopes whose facets normals are roots of type $A_d$, i.e., $e_i-e_j$ for $i\neq j$; they generalize the \enquote{alcoved polytopes} of Lam and Postnikov \cite{LamPostnikov05}.
Here we relax this notion by also calling a not necessarily bounded ordinary polyhedron a \emph{polytrope} if its facets normals are roots of type $A_d$; this was called a \enquote{weighted digraph polyhedron} in \cite{JoswigLoho:2016}.

The tropical unit ball $\ball{d}$ is a polytrope. But a more important example for us are the polytropes $Q= \bigcap_{a\in S}(a_+F_a)$, where $F_a \in \balld$ for each $a\in S$. Recall from Section~\ref{sec:polyhedral_norms} that in such a $Q$ bisectors of subsets of $S$ agree with affine subspaces. Thus:

\begin{lemma}
\label{lemma:star}
For each polytrope $Q$ as above and $a\in S$, the set $Q\cap \Vor_S(a)$ is the intersection of $Q$ with ordinary affine halfspaces with facet normal $e_i - e_j - e_k + e_\ell$, where $i$ and $j$ are fixed.
\end{lemma}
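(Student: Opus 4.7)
The plan is to compute the defining inequalities of $Q \cap \Vor_S(a)$ explicitly, exploiting the fact that $\dist(a,\cdot)$ is \emph{linear} on the cone $a+F_a$. Concretely, since the $F_a$'s appearing here are maximal cones of $\Fan(\ball{d})$ (cones over facets), each $F_a$ corresponds to an ordered pair $(i_a,j_a)$ with $i_a\in (F_a)_+$ the coordinate attaining the maximum and $j_a\in (F_a)_-$ the coordinate attaining the minimum; this is just the combinatorial description of the facets of $\ball{d}$ from Section~\ref{sec:tropical}. On the whole cone $a+F_a$ the tropical distance then admits the simple linear expression
$$
\dist(a,x) \ = \ (x_{i_a}-a_{i_a}) - (x_{j_a}-a_{j_a}) \enspace.
$$

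Next I would observe that by the very definition $Q=\bigcap_{a\in S}(a+F_a)$, every point $x\in Q$ lies simultaneously in $a+F_a$ for all sites $a\in S$. Hence for every pair $a,b\in S$, both $\dist(a,\cdot)$ and $\dist(b,\cdot)$ simplify to the linear formula above on all of $Q$. The Voronoi region $\Vor_S(a)$ is cut out by the inequalities $\dist(a,x)\le\dist(b,x)$ for $b\in S\setminus\{a\}$. Restricting to $Q$ and substituting the linear expressions yields, after rearrangement, the affine inequality
$$
(x_{i_a}-x_{j_a}) - (x_{i_b}-x_{j_b}) \ \le \ (a_{i_a}-a_{j_a}) - (b_{i_b}-b_{j_b}) \enspace,
$$
whose coefficient vector is precisely $e_{i_a}-e_{j_a}-e_{i_b}+e_{j_b}$. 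Intersecting all these halfspaces with $Q$ gives $Q\cap \Vor_S(a)$, and every defining inequality is of the stated form $e_i-e_j-e_k+e_\ell$ with $(i,j)=(i_a,j_a)$ \emph{fixed} (depending only on the chosen site $a$) and $(k,\ell)=(i_b,j_b)$ varying as $b$ ranges over the other sites.

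There is no substantial obstacle; the argument is essentially a substitution, and the only point one has to be slightly careful about is picking the correct representative pair for $F_a$. If one were to allow $F_a$ to be a lower-dimensional cone of $\Fan(\ball{d})$, the same argument still works: one picks any $i_a\in (F_a)_+$ and $j_a\in (F_a)_-$, and the linear function $\lineardist_{F_a}$ of Theorem~\ref{thm:genpos} agrees with $y\mapsto y_{i_a}-y_{j_a}$ on all of $F_a$ by the combinatorics of the face fan. The key conceptual point is simply that on a polytrope of this form the nonlinear maxima and minima in the definition of the tropical norm are \enquote{frozen} into a single fixed pair of coordinates per site.
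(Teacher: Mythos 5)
Your proof is correct and follows essentially the same approach as the paper's: both observe that on $Q=\bigcap_{a\in S}(a+F_a)$ each $\dist(a,\cdot)$ reduces to the linear form $x\mapsto(x_i-a_i)-(x_j-a_j)$ with $(i,j)$ the max/min pair of $F_a$, and then write out $\dist(a,x)\le\dist(b,x)$ as the halfspace inequality with normal $e_i-e_j-e_k+e_\ell$. Your version is just a slightly more explicit derivation (the paper's two-sentence proof leans on Proposition~\ref{prop:hypersurface} rather than re-deriving the linear expression), and your closing remark about lower-dimensional cones $F_a$ is a harmless extra observation not needed for the stated lemma.
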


\begin{proof}
Let $i$ and $j$ be the coordinates maximized and minimized in $F_a$, respectively. For each $b\in S\setminus a$, the condition for $x$ to be closer to $a$ than to $b$ is that $x_i - a_i - x_j + a_j \leq x_k - b_k - x_\ell + b_\ell$, where $k$ and $\ell$ are the coordinates corresponding to $F_b$; see Proposition~\ref{prop:hypersurface}.
\end{proof}

We call the intersection of a (possibly unbounded) polytrope with ordinary affine halfspaces with facet normal $e_i - e_j - e_k + e_\ell$, where $i$ and $j$ are fixed, a \emph{semi-polytrope} of type $(i,j)$.
A semi-polytrope in $\torus{d+1}\cong\R^d$ has at most $2\tbinom{d+1}{2}$ facets, since there are at most $(d+1)d$ vectors $e_i - e_j - e_k + e_\ell$ for $k\neq\ell$ and fixed $(i,j)$, plus the (at most) $(d+1)d$ facets of a polytrope.


A set $X\subset\torus{d+1}\cong\R^d$ is \emph{star convex} with center $c$ if for any point $x\in X$ the ordinary line segment $[c,x]$ is contained in~$X$.
Clearly any convex set is star convex, but the converse does not hold.
Star convex sets are contractible.
Despite the many differences to Euclidean Voronoi diagrams, the following result expresses a key similarity.

\begin{theorem}\label{thm:star}
  Let $S\subset\torus{d+1}$ be a finite set in weak general position.
  Then each tropical Voronoi region of $S$ is the star convex union of finitely many (possibly unbounded) semi-polytropes.
\end{theorem}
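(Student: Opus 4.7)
The plan is to combine a combinatorial decomposition of $\Vor_S(a)$ with a short triangle-inequality argument. Fix $a \in S$. Every $x \in \Vor_S(a)$ satisfies $x-a \in F_a$ for some top-dimensional cone $F_a \in \Fan(\ball^d)$, and similarly $x-b \in F_b$ for each $b \in S \setminus a$, since $\Fan(\ball^d)$ is a complete fan. Setting
\[
Q(F_a,(F_b)_{b\neq a}) \ := \ (a+F_a) \cap \bigcap_{b\in S\setminus a}(b+F_b),
\]
we therefore obtain
\[
\Vor_S(a) \ = \ \bigcup_{(F_a,(F_b)_{b\neq a})} Q(F_a,(F_b)_{b\neq a})\cap \Vor_S(a),
\]
where the union ranges over all finitely many choices of top-dimensional cones. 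By Lemma~\ref{lemma:star}, each non-empty piece is a semi-polytrope of type $(i,j)$, where $i$ and $j$ are the indices at which the coordinates are respectively maximized and minimized on $F_a$. This settles the \enquote{finite union of semi-polytropes} part of the statement.

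For star convexity with center $a$, the plan is to take any $x \in \Vor_S(a)$ and any $t \in [0,1]$, set $y := (1-t)a + tx$, and verify that $\dist(a,y) \leq \dist(b,y)$ for every $b \in S$. Since $y - a = t(x-a)$ and $y - x = (1-t)(a-x)$, homogeneity of $\dist$ yields $\dist(a,y) = t\dist(a,x)$ and $\dist(y,x) = (1-t)\dist(a,x)$. The reverse triangle inequality together with the hypothesis $x \in \Vor_S(a)$, which gives $\dist(b,x) \geq \dist(a,x)$, then produces
\[
\dist(b,y) \ \geq \ \dist(b,x) - \dist(y,x) \ \geq \ \dist(a,x) - (1-t)\dist(a,x) \ = \ t\dist(a,x) \ = \ \dist(a,y),
\]
so $y \in \Vor_S(a)$, as required.

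Since the bulk of the structural content is already packaged in Proposition~\ref{prop:polyhedra-bisectors} and Lemma~\ref{lemma:star}, the proof will mostly be bookkeeping plus the short triangle-inequality computation. The only subtle point is to restrict the decomposition to top-dimensional cones of $\Fan(\ball^d)$, so that the pair $(i,j)$ attached to $F_a$ is well-defined and Lemma~\ref{lemma:star} applies verbatim; this is harmless because the top-dimensional cones already cover $\R^d$. Weak general position is not needed for the star-convexity argument itself, but it is implicit in the assumption that the underlying cell structure from Proposition~\ref{prop:polyhedra-bisectors} is the one used in Lemma~\ref{lemma:star}.
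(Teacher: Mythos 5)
Your decomposition of $\Vor_S(a)$ into pieces $Q \cap \Vor_S(a)$ over all choices of maximal cones, invoking Lemma~\ref{lemma:star} to identify each piece as a semi-polytrope, is exactly the paper's argument. The only difference is that the paper cites star-convexity of Minkowski Voronoi regions as a known fact (referring it to Theorem~\ref{thm:central_projection} and the literature), whereas you spell out a correct, self-contained triangle-inequality proof; this is a harmless and slightly more elementary variant of the same route.
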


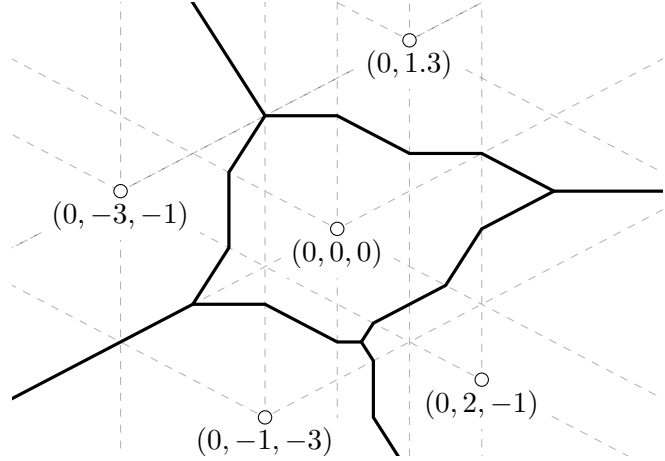
\begin{figure}\centering
  \begin{tikzpicture}[scale=1, x={(-0.95cm,-0.5cm)},y={(0.95cm,-0.5cm)}, z={(0cm,1cm)}]
    \tikzset{tvd/.style = { very thick }}
    \tikzset{partition/.style = { dashed, opacity=0.3 }}
    \clip (0, -4.5, -5.25) -- (0,-4.5,0.75) -- (0, 4.5, 5.25) -- (0,4.5, -0.75) --cycle;

    \draw[tvd] (0,0,-1.5) -- (0,-1,-1.5) -- (0, -2, -2) -- (0,-1.5, -1) -- (0,-1.5, 0) -- (0,-1,1) -- (0,0,1.5) -- (0,1,1.5) -- (0,2,2) -- (0,3,2) -- (0,2,1) -- (0,1.5,0) -- (0,0.5, -1) -- (0, 0.3333, -1.3333) -- cycle;
    \draw[tvd] (0,-1,1) -- (0,-51, 51);
    \draw[tvd] (0,-2,-2) -- (100, -2,-2);
    \draw[tvd] (0,3,2) -- (0,103, 52);
    \draw[tvd] (0, 0.3333, -1.3333) -- (0, 0.5, -1.5) -- (0, 0.5, -2.25) -- (0, 1, -2.75) -- (0,1,-102.75);

    \draw[partition] (0,0,-100) -- (0,0,+100);
    \draw[partition] (0,-100,0) -- (0,+100, 0);
    \draw[partition] (-100,0,0) -- (100,0,0);

    \draw[partition] (0,1,3-100) -- (0,1,3+100);
    \draw[partition] (0,1-100,3) -- (0,1+100, 3);
    \draw[partition] (-100,1,3) -- (100,1,3);

    \draw[dashed, opacity=0.3] (0,-3,-100-1) -- (0,-3,+100-1);
    \draw[dashed, opacity=0.3] (-100,-3,-1) -- (100,-3,-1);

    \draw[dashed, opacity=0.3] (0,-1,-100-3) -- (0,-1,+100-3);
    \draw[dashed, opacity=0.3] (0,-100-1,-3) -- (0,+100-1, -3);
    \draw[dashed, opacity=0.3] (-100,0-1,-3) -- (100,-1,-3);

    \draw[dashed, opacity=0.3] (0,2,-100-1) -- (0,2,+100-1);
    \draw[dashed, opacity=0.3] (0,2-100,-1) -- (0,+100+2, -1);

    \draw (0,0,0) node[anchor=north, fill=white] {$(0,0,0)$};
    \draw[fill=white, draw=black] (0,0,0) circle (2.5pt);
    \draw (0,1,3) node[anchor=north, fill=white] {$(0,1,3)$};
    \draw[fill=white, draw=black] (0,1,3) circle (2.5pt);
    \draw (0,-3,-1) node[anchor=north, fill=white] {$(0,-3,-1)$};
    \draw[fill=white, draw=black] (0,-3,-1) circle (2.5pt);
    \draw (0,-1,-3) node[anchor=north, fill=white] {$(0,-1,-3)$};
    \draw[fill=white, draw=black] (0,-1,-3) circle (2.5pt);
    \draw (0,2,-1) node[anchor=north, fill=white] {$(0,2,-1)$};
    \draw[fill=white, draw=black] (0,2,-1) circle (2.5pt);
  \end{tikzpicture}
  \caption{Tropical Voronoi diagram of five points in $\torus{3}$. The decomposition of Voronoi regions into semi-polytropes is shown by dashed lines}
  \label{fig:tvd}
\end{figure}


\begin{proof}
  That Voronoi regions for polyhedral norms are star-convex is a well-known fact (see \cite[p.~133]{AurenhammerKleinLee:2013} or \cite[p.~127]{MartiniSwanepoel:2004}), which follows for example from Theorem~\ref{thm:central_projection}.
  By Lemma \ref{lemma:star},  $\Vor_S(a)$ decomposes as finitely many semi-polytropes, by intersecting it with the individual polyhedra $Q= \bigcap_{a\in S}(a_+F_a)$, for all choices of $\{F_a\}_{a\in S}$.
%
\end{proof}

Semi-polytropes are not necessarily tropically convex, and this entails that the regions of a tropical Voronoi diagram are not necessarily tropically convex either; see Figure~\ref{fig:tvd} for an example.
The tropical torus $\torus{d+1}$ is compactified by the tropical projective space $\TP{d}$; the latter is the $\max$-tropical convex hull of the $d+1$ $\max$-tropical unit vectors 
\[ (0,-\infty,-\infty,\dots,-\infty), \ (-\infty,0,-\infty,\dots,-\infty), \,\ldots\,, \ (-\infty,-\infty,\dots,-\infty,0). \]
In this way, $\TP{d}$ may be seen as an infinitely scaled tropical unit ball, which is a polytrope; see \cite[\S3.5]{JoswigLoho:2016}.
Similarly for arbitrary (semi-)polytropes the line between bounded and unbounded is blurred in the compactification.

\section{Computing tropical Voronoi diagrams}
\label{sec:computing}
We will discuss several algorithms.
Some of these methods are similar to their classical Euclidean counterparts, others rely on tailored data structures, which are based on Theorem~\ref{thm:star}.
For the complexity analysis of our algorithms we will consider the dimension as constant.

\subsection{The planar case}
\label{sec:beachline}
There are several methods for computing Euclidean Voronoi diagrams in $\R^2$ with the optimal time complexity $O(n\log n)$ and linear space; see \cite[\S7.2]{Four+Marks}.
This agrees with the situation for planar tropical convex hull computations; see \cite[\S5]{Tropical+halfspaces}.
Chew and Drysdale~\cite{Chew1985voronoi} gave a divide-and-conquer algorithm with the same complexity for planar Voronoi diagrams with respect to arbitrary norms.
Here we sketch a tropical analog of Fortune's beach line algorithm \cite{Fortune:1987}; see also \cite{Widmayer1987some}.

Suppose that we are given a set $S$ of $n$ sites in $\torus{3}$.
In view of Theorem~\ref{thm:star} the tropical Voronoi diagram of $S$ gives rise to a planar graph where vertices are circumcenters of triples of points in $S$, edges are two point bisectors, and faces are Voronoi regions.
We can make this planar embedding piecewise linear by subdividing each bisector into at most five segments; see Figure~\ref{tikz:bisectors}.
The relevant data structure, as in the classical setting, is a doubly-connected edge list which requires $O(n)$ space; see \cite[\S2.2]{Four+Marks}.

The beach line algorithm is based on a line sweep.
The \emph{tropical sweep line} at \emph{time} $t$ in $\torus{3}$ is the set $L(t)=(0,t,0)+\R(0,0,1)+\R\ones$.
Note that $L(t)$ is an ordinary line which is also tropically convex (with respect to $\min$ and $\max$).
For an arbitrary point $x$ we call the set
\[
  P(x,t) \ = \ \SetOf{a\in\torus{3}}{\dist(x,a)=\dist(x,L(t))}
\] 
the \emph{parabola spanned by $x$ and $L(t)$}; here $\dist(x,L(t))=\min\smallSetOf{\dist(x,y)}{y-(0,t,0)\in\R(0,0,1)+\R\ones}$.
This is a $1$-dimensional polyhedral complex, which is homeomorphic with $L(t)$ via orthogonal projection, consisting of five segments.

We will assume that our set $S$ of sites is in general position and hence, in particular, each sweep line contains at most one site.
A point $a=(a_1,a_2,a_3)$ is said to have been \emph{visited} by the sweep line $L(t)$ if $a_2-a_1\leq t$.

The \emph{beach line} $B(t)$ of $S$ at time $t$ is formed by the points $(b_1,b_2,b_3)$ which lie on a parabola $P(s,t)$ for a visited point $s$ such that $b_2-b_1$ is maximal among all such points for a fixed value $b_3-b_1$.
That is, the beach line is formed by the right-most points on the parabolas spanned by the visited points and the sweep line; see Figure~\ref{fig:beachline}.
So $B(t)$ is a union of parabolic arcs; it is easy to see that each parabola contributes at most two arcs to the beach line at any time.
Like a single parabola also the beach line $B(t)$ is homeomorphic to $L(t)$ via orthogonal projection.
In the portion of $\torus{3}$ left to $B(t)$ the tropical Voronoi diagram of $S$ is known at time $t$.

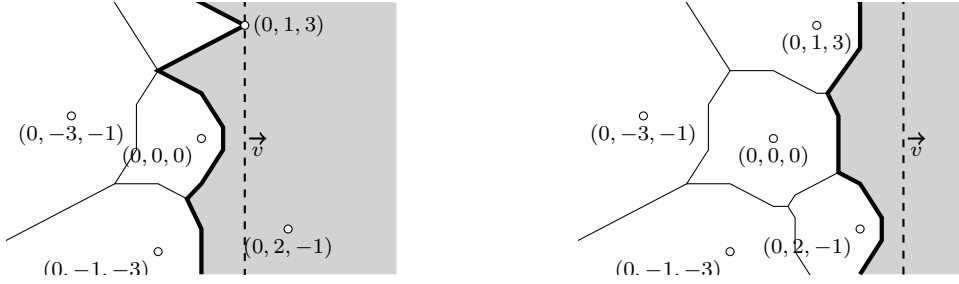
\begin{figure}
  \begin{tikzpicture}[scale=0.6, x={(-0.95cm,-0.5cm)},y={(0.95cm,-0.5cm)}, z={(0cm,1cm)}]
    \scriptsize
    \clip (0, -4.5, -5.25) -- (0,-4.5,0.75) -- (0, 4.5, 5.25) -- (0,4.5, -0.75) --cycle;

    \draw (0,-0.3333, -1.5) -- (0,-1,-1.5) -- (0, -2, -2) -- (0,-1.5, -1) -- (0,-1.5, 0) -- (0,-1,1);

    \draw (0,-1,1) -- (0,-51, 51);
    \draw (0,-2,-2) -- (100, -2,-2);

    \draw[dashed, thick] (0,1,102) -- (0,1,-98);

    \draw[ultra thick] (0, -1, 3) -- (0,1,3) -- (0,-1,1) -- (0,0,1) -- (0,0.5, 0.5) -- (0,0.5,0)-- (0,0,-1) -- (0,-0.3333, -1.5) -- (0,0,-2) -- (0,0,-3);
    \fill[color=sqsqsq,fill=sqsqsq,fill opacity=0.2] (0, -1, 3) -- (0,1,3) -- (0,-1,1) -- (0,0,1) -- (0,0.5, 0.5) -- (0,0.5,0)-- (0,0,-1) -- (0,-0.3333, -1.5) -- (0,0,-2) -- (0,0,-3) -- (0,4.5,-0.75) -- (0,4.5,5.25) -- cycle;

    \draw [thick, ->] (0,1.1,0.55) -- (0,1.5,0.75);
    \draw (0,1.3,0.65) node[anchor=north] {$v$};

    \draw[fill=white, draw=black] (0,0,0) circle (2.5pt);
    \draw (0,0,0) node[anchor=north east] {$(0,0,0)$};
    \draw[fill=white, draw=black] (0,1,3) circle (2.5pt);
    \draw (0,1,3) node[anchor=west] {$(0,1,3)$};
    \draw[fill=white, draw=black] (0,-3,-1) circle (2.5pt);
    \draw (0,-3,-1) node[anchor=north] {$(0,-3,-1)$};
    \draw[fill=white, draw=black] (0,-1,-3) circle (2.5pt);
    \draw (0,-1,-3) node[anchor=north east] {$(0,-1,-3)$};
    \draw[fill=white, draw=black] (0,2,-1) circle (2.5pt);
    \draw (0,2,-1) node[anchor=north] {$(0,2,-1)$};
  \end{tikzpicture}
  \hfill
  \begin{tikzpicture}[scale=0.6, x={(-0.95cm,-0.5cm)},y={(0.95cm,-0.5cm)}, z={(0cm,1cm)}]
    \scriptsize
    \clip (0, -4.5, -5.25) -- (0,-4.5,0.75) -- (0, 4.5, 5.25) -- (0,4.5, -0.75) --cycle;

    \draw (0, 0.3333,-1.3333) -- (0,0,-1.5) -- (0,-1,-1.5) -- (0, -2, -2) -- (0,-1.5, -1) -- (0,-1.5, 0) -- (0,-1,1) -- (0,0,1.5) -- (0,1,1.5) -- (0,1.25,1.625);

    \draw (0,1.5,0) -- (0,0.5, -1) -- (0, 0.3333, -1.3333);
    \draw (0,-1,1) -- (0,-51, 51);
    \draw (0,-2,-2) -- (100, -2,-2);
    \draw (0, 0.3333, -1.3333) -- (0, 0.5, -1.5) -- (0, 0.5, -2.25) -- (0, 1, -2.75) -- (0,1,-102.75);

    \draw[dashed,thick] (0,3,102) -- (0,3,-98);

    \draw[ultra thick] (0,2,-2) -- (0,2.5,-1) -- (0,2.5,-0.5) -- (0,2,0) -- (0,1.5,0) -- (0,1.5,1.25) -- (0,1.25,1.625) -- (0,2,3) -- (0,2,4) -- (0,1,5) ;

    \fill[color=sqsqsq,fill=sqsqsq,fill opacity=0.2] (0,2,-2) -- (0,2.5,-1) -- (0,2.5,-0.5) -- (0,2,0) -- (0,1.5,0) -- (0,1.5,1.25) -- (0,1.25,1.625) -- (0,2,3) -- (0,2,4) -- (0,1,5) -- (0,4.5,5.25) -- (0,4.5,-0.75) -- cycle;

    \draw [thick, ->] (0,3.1,1.55) -- (0,3.5,1.75);
    \draw (0,3.3,1.65) node[anchor=north] {$v$};

    \draw[fill=white, draw=black] (0,0,0) circle (2.5pt);
    \draw (0,0,0) node[anchor=north] {$(0,0,0)$};
    \draw[fill=white, draw=black] (0,1,3) circle (2.5pt);
    \draw (0,1,3) node[anchor=north] {$(0,1,3)$};
    \draw[fill=white, draw=black] (0,-3,-1) circle (2.5pt);
    \draw (0,-3,-1) node[anchor=north] {$(0,-3,-1)$};
    \draw[fill=white, draw=black] (0,-1,-3) circle (2.5pt);
    \draw (0,-1,-3) node[anchor=north east] {$(0,-1,-3)$};
    \draw[fill=white, draw=black] (0,2,-1) circle (2.5pt);
    \draw (0,2,-1) node[anchor=north east] {$(0,2,-1)$};
  \end{tikzpicture}
  \caption{The beach line and the sweep line, for $t=1$ (left) and $t=3$ (right).}
  \label{fig:beachline}
\end{figure}


\begin{observation}
  The beach line is a polygonal line with $O(n)$ segments.
\end{observation}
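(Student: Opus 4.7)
The plan rests on a straightforward counting argument combining two facts. At time $t$, only sites visited by the sweep line contribute parabolas to $B(t)$, so at most $n = |S|$ parabolas are involved; and as already noted just before the statement, each such parabola contributes at most two maximal arcs to the beach line. Thus $B(t)$ decomposes into at most $2n$ parabolic arcs, and it remains only to bound the number of linear segments in each arc by a constant.

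For the latter, I would establish that each parabola $P(s,t)$ has constant combinatorial complexity. Since $P(s,t)$ is the locus of points $x\in\torus{3}$ with $\dist(x,s)=\dist(x,L(t))$, and $L(t)$ is a (tropically convex) line, $P(s,t)$ can be analyzed as a degenerate two-point bisector. The classification in Example~\ref{ex:2dim} — where the bisection fan $\bop^2$ gives three combinatorial types of planar tropical bisectors, each with at most five pieces — then implies that $P(s,t)$ is a one-dimensional polyhedral complex with at most five linear segments. This is consistent with the parabolas depicted in Figure~\ref{fig:beachline}.

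Combining the two bounds: $B(t)$ has at most $2n$ arcs, each composed of at most five linear segments, for a total of at most $10n = O(n)$ segments. So $B(t)$ is a polygonal line with $O(n)$ segments.

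The only step that requires real justification is the claim that each parabola contributes at most two arcs, which the paper asserts informally. The approach I would take is to parameterize $P(s,t)$ by the height coordinate $b_3-b_1$, under which $P(s,t)$ becomes a piecewise linear graph over $\R$ (via orthogonal projection onto $L(t)$). The beach line, similarly parameterized, is the pointwise maximum of $b_2-b_1$ across all visited parabolas. Outside of a bounded interval, $P(s,t)$ consists of two parallel rays heading in opposite directions along $L(t)$; inside, it forms a finite bounded wedge. A pairwise analysis using the classification of planar tropical bisectors shows that the set of heights at which $P(s,t)$ strictly exceeds any other $P(s',t)$ is a union of at most two intervals, from which the two-arc bound follows by a standard upper envelope argument. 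This pairwise monotonicity step is the only mildly delicate part; the rest is a direct application of Example~\ref{ex:2dim} and the already asserted bound.
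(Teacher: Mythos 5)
Your high-level counting — $n$ parabolas, at most two arcs per parabola on the beach line, each arc a connected sub-polyline of a constant-complexity ($\le 5$-segment) curve, hence $O(n)$ segments in total — is exactly the argument the paper relies on. The paper does not supply a proof of this observation; it simply asserts in the preceding paragraph that each parabola has five segments and that \enquote{it is easy to see that each parabola contributes at most two arcs.} So as far as matching the paper's intent, your first two paragraphs are on target (though your detour through Example~\ref{ex:2dim} is not needed, since the parabola is a point--line equidistant locus, not a two-point bisector, and the five-segment fact is asserted directly in the text rather than deduced from $\bop^2$).

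The gap is in your final paragraph, where you try to justify the two-arc claim. Knowing that for each individual pair $(s,s')$ the set $\{h : P(s,t)(h) > P(s',t)(h)\}$ is a union of at most two intervals does \emph{not} imply that the set of heights at which $P(s,t)$ exceeds \emph{all} other parabolas is itself a union of at most two intervals: that set is the \emph{intersection} of the pairwise sets, and an intersection of $n-1$ sets each with $\le 2$ components can have $\Omega(n)$ components. Nor does a generic \enquote{upper envelope argument} save you: if you only know the pairwise crossing number is bounded by some constant $s$, the Davenport--Schinzel machinery gives envelope complexity $\lambda_s(n)$, which is only guaranteed to be $O(n)$ for $s\le 2$; two parabolic arcs with five pieces each could cross three or four times, landing you in $\lambda_3(n)=\Theta(n\alpha(n))$ or worse. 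The robust way to get the linear bound (and the way it is done for Fortune's algorithm in the Euclidean setting, cf.\ \cite{Four+Marks}) is amortized: show that new arcs appear only at site events, each of which increases the arc count by exactly two, and that arcs disappear only at circle events, each of which decreases it by one; since there are $n$ site events, the beach line never has more than $2n-1$ arcs. If you want a fully rigorous proof of the observation, you should replace the pairwise/upper-envelope sketch with this event-based accounting.
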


The actual algorithm works as in the classical case.
We maintain a priority queue of \emph{site events} (when the sweep line visits a site) and \emph{circle events} (when there is a candidate for a new vertex of the tropical Voronoi diagram).
The total number of events is linear in $n$.
As in the classical case, it is possible to relax the condition on general position by means of symbolic perturbation.

\begin{theorem}
  The beach line algorithm computes a tropical Voronoi diagram of $n$ sites in $\torus{3}$ in $O(n\log n)$ time and $O(n)$ space.
\end{theorem}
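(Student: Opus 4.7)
The plan is to mimic the classical analysis of Fortune's sweep algorithm, using the structural facts already established in the paper to control the combinatorial complexity of the beach line and the number of events. I will first verify correctness, i.e.\ that what gets reported at the end is exactly $\Vor(S)$, and then bound the running time and space.

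For correctness, the central invariant is: at time $t$, for every point $x$ lying weakly to the left of the beach line $B(t)$ (in the sense that $x_2-x_1 \le $ the corresponding coordinate on $B(t)$), the nearest visited site to $x$ is determined, and the portion of $\Vor(S)$ in this region has already been output. I would prove this by induction on the sequence of events. A \emph{site event} occurs when the sweep line first visits a new site $s$; at this moment the parabola $P(s,t)$ degenerates to the vertical segment from $s$ along $L(t)$, which inserts a new pair of arcs into $B(t)$ (splitting at most one existing arc). A \emph{circle event} occurs when three consecutive arcs on $B(t)$ meet in a single point; by Lemma~\ref{lem:dim2} (or Corollary~\ref{coro:3points}), three sites in weak general position have at most one circumcenter, so this meeting point is a genuine vertex of $\Vor(S)$, and the middle arc disappears. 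Site events and circle events are the only topological changes of $B(t)$: between them, the arcs slide but no new arc is created or destroyed. This is the standard Fortune argument and carries over with minor modifications (each parabolic arc is now a union of at most five ordinary line segments).

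Next I would bound the number of events. Site events number exactly $n$. For circle events, I would argue that each disappearing arc consumes one circle event, and each arc created (by a site event or a previous circle event) disappears at most once, so the number of circle events is $O(n)$. Combined with the observation that $B(t)$ has $O(n)$ segments at any time, the total number of events is $O(n)$. For the data structures, the beach line is stored in a balanced binary search tree keyed by the cross-ratio-like coordinate $x_3-x_1$, so locating the arc above a newly visited site or removing a shrinking arc takes $O(\log n)$; the event queue is a standard priority queue, also $O(\log n)$ per operation; the output is written to a doubly-connected edge list, consuming $O(n)$ space since a planar Voronoi diagram of $n$ sites has $O(n)$ vertices, edges and faces. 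Combining, we obtain $O(n\log n)$ time and $O(n)$ space.

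The main obstacle I anticipate is verifying that the tropical parabolas $P(x,t)$ behave \emph{well} with respect to the sweep, namely: (i) two arcs on $B(t)$ whose generating sites are fixed meet in a single point that moves continuously and predictably with $t$, so that circle events can be scheduled by closed-form formulas; (ii) a new parabola, once inserted, cannot produce more than a bounded number of new arcs in $B(t)$; and (iii) degenerate coincidences (multiple circle events at the same time, a site lying exactly on the current beach line, etc.) are handled by a symbolic perturbation that is consistent with Proposition~\ref{prop:digraph} and Corollary~\ref{coro:genericity}. Each of these is a routine but non-trivial case analysis on the five segments of a parabolic arc and the combinatorial types of two-point bisectors classified by Theorem~\ref{thm:ccfan}; none of them changes the overall complexity, but together they are what turns the classical algorithm into a tropical one. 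The star-convexity of Voronoi regions from Theorem~\ref{thm:star} is what ultimately guarantees that once an arc leaves the beach line it never reappears, which is the lemma underlying the $O(n)$ bound on the number of circle events.
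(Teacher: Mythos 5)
Your proposal is correct and follows essentially the same route the paper (implicitly) takes: the paper gives no detailed proof, only the sketch that the classical Fortune analysis carries over once the beach line is known to have $O(n)$ segments and circumcenters of three sites are unique; your argument fleshes that sketch out with the right references (Lemma~\ref{lem:dim2}/Corollary~\ref{coro:3points} for uniqueness of circle-event vertices, Theorem~\ref{thm:star} for non-reappearance of arcs, the $O(n)$-segment observation for the beach-line size) and arrives at the same $O(n\log n)$ time and $O(n)$ space bound by the standard balanced-tree/priority-queue analysis.
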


For the output we can choose, with the same complexity, between an abstract planar graph (encoding $\Vor(S)$ topologically) and its piecewise linear embedding resulting from Theorem~\ref{thm:star}.

\subsection{Polytrope partitions}
\label{sec:tmap}

Let $S \subseteq \torus{d+1}$ be a finite set of sites.
From Theorem~\ref{thm:star} we know that the tropical Voronoi diagram can be described in terms of (semi-)polytropes.
For the definition and basic facts on polytropes, see Section~\ref{sec:regions} and \cite{JoswigKulas:2008}.
The following takes inspiration from the \emph{trapezoid map} data structure; see \cite[\S6]{Four+Marks}.

\begin{definition}
  \label{def:polytrope_map}
  A \emph{polytrope partition} for $S$ is a finite collection $\tmap$ of (perhaps unbounded) non-degenerate polytropes with disjoint interiors, covering $\torus{d+1}$, such that:
  \begin{enumerate}
  \item each facet-defining hyperplane of any cell in $\tmap$ lies in the hyperplane arrangement $S+A_d$.
  \item\label{it:polytrope-decomp:max} for each cell $P$ in $\tmap$ and site $a\in S$ the restricted Voronoi region $\Vor_S(a) \cap P$ is contained in a maximal cone $a+F$ of $a+\Fan(\balld)$.
  \end{enumerate}
	A \emph{valid labeling} for $\tmap$ assigns to each cell $P\in \tmap$ a (partial) matching $\mathcal{L}_{\tmap}(P)$ of $S \times \Fan(\ball{d})$ containing
\[ \label{eq:matchings}
    \SetOf{(a,F)\in S \times \Fan(\ball{d})}{(a+F) \cap P\cap \Vor_S(a) \neq \emptyset}.
  \]
\end{definition}
%

The distance function $x\mapsto \dist(x,a)$ to a fixed site $a$ is piecewise linear, as it is linear in each translated cone $a+F$ for $F\in\Fan(\balld)$. Polytrope partitions are designed to exploit this fact, so that the distances to the relevant sites in each cell are linear. In particular:

\begin{observation}
  \label{obs:dome}
  Let $P$ be a cell in a polytrope partition $\tmap$ for $S$.
  Then for all $x\in P$ we have
  \begin{equation}\label{eq:dome:lineardist}
    \dist(x, S) \ = \ \min_{a\in S} \lineardist_{F_a}(x-a)
  \end{equation}
  where $\lineardist_{F_a}$ is the linear function defined by restricting the distance to $a$ on some maximal cone $a+F_a$ of $a+\Fan(\ball{d})$ which contains $\Vor_S(a)\cap P$.
  Thus computing the restriction of $\Vor(S)$ to the polytrope~$P$ amounts to finding the regions of linearity of the tropical polynomial $\min_{a\in S} \lineardist_{F_a}(x)$.
  The latter can be obtained via an ordinary dual convex hull computation.
\end{observation}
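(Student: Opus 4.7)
The observation contains three claims, which I would address in order, the distance formula being the main content. The plan is to reduce the formula to the trivial identity $\dist(x,S) = \min_{a\in S}\dist(x,a)$ by verifying that $\lineardist_{F_a}(x-a) = \dist(x,a)$ for every $x \in P$ and every $a \in S$ that contributes to the minimum. This equality is immediate from the definition of $\lineardist_{F_a}$ provided one knows $P \subseteq a + F_a$, which is strictly stronger than the condition $\Vor_S(a) \cap P \subseteq a + F_a$ given by Definition~\ref{def:polytrope_map}(ii).

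To upgrade the containment from $\Vor_S(a) \cap P$ to all of $P$, I would invoke Definition~\ref{def:polytrope_map}(i): every facet-defining hyperplane of $P$ lies in the arrangement $S + A_d$. The boundary hyperplanes of the translated fan $a + \Fan(\balld)$ are precisely those of the form $\{x_i - x_j = a_i - a_j\}$ with $a \in S$ and $i \neq j$, and all of these belong to $S + A_d$. Since the cells of the $(S+A_d)$-arrangement refine the fan $a + \Fan(\balld)$ for every $a$, no such hyperplane can cut through the interior of $P$; hence $P$ lies in a single maximal cone of $a + \Fan(\balld)$, which by (ii) must be $a + F_a$ whenever $\Vor_S(a) \cap P$ is nonempty. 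Sites with empty restricted Voronoi region can either be omitted from the minimum or assigned any maximal cone containing $P$. With $P \subseteq a + F_a$ in hand the formula is obtained by taking $\min_{a\in S}$ of the equalities $\lineardist_{F_a}(x-a) = \dist(x,a)$.

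The remaining two claims then follow easily. The restriction of $\dist(\cdot, S)$ to $P$ is the $\min$-plus tropical polynomial $\min_{a\in S}\lineardist_{F_a}(\cdot - a)$; its maximal regions of linearity are by construction the loci where some single affine term attains the minimum, and these coincide with the Voronoi cells $\Vor_S(a) \cap P$ by definition. For the algorithmic part, computing the regions of linearity of the lower envelope of $|S|$ affine forms in $\R^d$ is equivalent, via the classical duality between tropical polynomials and ordinary polyhedra, to computing the lower hull of the corresponding lifted coefficient-and-constant vectors in $\R^{d+1}$, a standard convex-hull computation.

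The key obstacle is precisely the passage from condition (ii) to the stronger containment $P \subseteq a + F_a$: a direct use of (ii) alone is insufficient, since one can conceive a cell $P$ whose Voronoi intersection with $a$ lies in one maximal cone while $P$ itself straddles several, and on the stray portion $\lineardist_{F_a}(x-a)$ would strictly underestimate $\dist(x,a)$ and could drop below $\dist(x,S)$, invalidating the minimum formula. The argument must therefore exploit condition (i) in an essential way, specifically that the partition refines (and does not merely have its facets on) the $(S + A_d)$-arrangement.
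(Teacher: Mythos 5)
Your overall strategy is sound: reduce the identity $\dist(x,S)=\min_a \lineardist_{F_a}(x-a)$ to the trivial identity $\dist(x,S)=\min_a\dist(x,a)$ by showing that each $\lineardist_{F_a}(\cdot-a)$ agrees with $\dist(\cdot,a)$ on $P$, i.e.\ by showing $P\subseteq a+F_a$, and your warning that condition~(ii) of Definition~\ref{def:polytrope_map} alone does not obviously give this containment is a fair observation. The problem is the step where you derive $P\subseteq a+F_a$ from condition~(i).

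You argue: since the facet-defining hyperplanes of $P$ lie in $S+A_d$, and the arrangement $S+A_d$ refines $a+\Fan(\balld)$, no hyperplane of $S+A_d$ can cut through the interior of $P$. That inference does not hold. Condition~(i) constrains only the \emph{facet-defining} hyperplanes of $P$; it says nothing about whether \emph{other} hyperplanes of $S+A_d$ pass through the interior of $P$. In fact, the polytrope partition that Section~\ref{sec:incremental} actually uses, $\tmap=S+\Fan(\balld)$, is strictly coarser than the braid arrangement $S+A_d$ (the paper states $S+A_d$ is the \emph{finest} polytrope partition), so its cells are genuinely crossed in their interiors by braid hyperplanes of the form $\{x_i-x_j=a_i-a_j\}$ with $i,j$ both ``free'' in the cone $F_a$. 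Your intermediate claim is therefore false for the very partition the observation is applied to. The containment $P\subseteq a+F_a$ \emph{does} hold for $\tmap=S+\Fan(\balld)$, but for a different reason: that partition is, by construction, the common refinement of the translated fans $a+\Fan(\balld)$, $a\in S$, so each of its cells sits inside one maximal cone of every $a+\Fan(\balld)$. This is not a consequence of condition~(i). Your closing diagnosis --- that the partition must ``refine'' $S+A_d$ --- points in the wrong direction as well: what is needed is that $\tmap$ refines the common fan $\bigcap_{a\in S}(a+\Fan(\balld))$, which is a strictly weaker requirement than refining $S+A_d$ and is satisfied both by $S+A_d$ and by $S+\Fan(\balld)$.

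To be fair, the paper itself does not supply a rigorous derivation here; it presents the observation as a design feature of polytrope partitions (``distances to the relevant sites in each cell are linear''). So the gap is partly in the source. But a correct justification should appeal to the fact that the specific partitions used refine $\bigcap_a(a+\Fan(\balld))$, rather than inferring it from condition~(i). The algorithmic remark about the lower envelope and dual convex hull is fine once the linearity on $P$ is established.
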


Note that the maximal cone $F_a$ in the above is irrelevant if $\Vor_S(a)\cap P= \emptyset$, and it is unique, by axiom \eqref{it:polytrope-decomp:max}, if $\Vor_S(a)\cap P \neq \emptyset$. If the polytrope partition is equipped with a valid labeling, then this tells us the choice of the right cone $F_a$ for each site, if it exists.

The following shows that polytrope partitions exist.

\begin{example}\label{exmp:standard}
  The braid arrangement $A_d$ consists of the $\tbinom{d+1}{2}$ ordinary hyperplanes $\smallSetOf{x}{x_i=x_j}$, where $i\neq j$.
  This gives rise to the \emph{standard polytrope partition} $S + A_{d}$, which is finer than any other polytrope partition for $S$; Figure~\ref{fig:tvd} shows an example for $d=2$.
  This construction occurs in planar tropical convex hull algorithms; see \cite[Figure~3]{Tropical+halfspaces}.
\end{example}

And finally, the following lemma shows that valid labelings exist for every polytrope partition, if the sites $S$ are in weak general position:

\begin{lemma}
  \label{lem:labeling}
  Let $\tmap$ be a polytrope partition for $S$.
  If $S$ is in weak general position then there is a valid labeling of $\tmap$.
  Moreover, for $d$ considered constant, a labeling of each polytropal cell has constant size, and it can be computed in $O(n)$ time.
\end{lemma}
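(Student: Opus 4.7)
The plan is to define the labeling directly from condition (2) of Definition~\ref{def:polytrope_map}, then bound its size and describe an $O(n)$ algorithm. For each cell $P$ and each site $a$ with $\Vor_S(a) \cap P \neq \emptyset$, set $F_a$ to be the unique maximal cone of $\Fan(\balld)$ containing this intersection (unique by condition~(2); unambiguous by weak general position, which rules out $\Vor_S(a) \cap P$ sitting on a boundary of maximal cones). The resulting set $\{(a,F_a)\}$ forms a valid labeling by construction, with each site appearing at most once.

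To bound the size, observe that in fixed dimension $d$ the polytrope $P$ has bounded combinatorial complexity and can meet only constantly many Voronoi regions; thus only $O(1)$ sites contribute a non-trivial pair. For the $O(n)$ time bound, iterate through all sites $a \in S$ and perform a constant-size linear program per site both to test whether $\Vor_S(a) \cap P \neq \emptyset$ and to identify $F_a$ in that case. Concretely, I would compute $\min_{x \in P} \dist(x,a)$, which is a min-max LP of constant size once $P$ is represented by its polytropal inequalities, and then inspect the constraint tight at the optimum: under weak general position the minimizer lies in the interior of a unique maximal cone of $a + \Fan(\balld)$. A complementary check against the facets of $P$ (which by condition (1) come from bisectors with neighboring sites encoded in $S+A_d$) then confirms whether this cone actually hosts points of $\Vor_S(a)\cap P$. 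Processing each site in constant time yields $O(n)$ total per cell.

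The main obstacle is the validity argument, namely showing that the cone identified by the LP coincides with the one guaranteed by condition~(2). The plan is to exploit the star-convexity of $\Vor_S(a)$ with center $a$ (Theorem~\ref{thm:star}): given any alleged witness $q \in \Vor_S(a) \cap P$, the segment $[a,q]$ lies in $\Vor_S(a)$, hence its entry point $q_0$ into $P$ lies in $\Vor_S(a)\cap P$ and in the same maximal cone as $q-a$; moreover $\dist(\cdot,a)$ is linear and monotone along this ray, so $q_0$ is a candidate minimizer. Combining this with condition~(2) (which pins all points of $\Vor_S(a)\cap P$ to one cone) and weak general position (which forces maximal cones to be distinguishable at the LP optimum) should force the cone identified by the LP to agree with $F_a$. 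Making this rigorous—in particular excluding the scenario where the unconstrained minimum of $\dist(\cdot,a)$ over $P$ lies outside $\Vor_S(a)$ while $\Vor_S(a)\cap P$ is nonetheless non-empty—is the technical crux.
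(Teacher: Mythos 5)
Your argument establishes that each site is assigned at most one cone (via condition~(2) of Definition~\ref{def:polytrope_map} plus weak general position), but it never shows that no two distinct sites can be assigned the \emph{same} cone --- and that is the real content of the lemma, since without it $\{(a,F_a)\}$ is not a matching. You instead assert that ``the polytrope $P$ \ldots can meet only constantly many Voronoi regions,'' but this is exactly what requires proof; bounded combinatorial complexity of $P$ does not imply it, as a single bounded cell of an arrangement can a priori meet arbitrarily many Voronoi regions. The paper supplies precisely the missing step via a strict-dominance argument: if $(a+F)\cap P\cap\Vor_S(a)$ and $(b+F)\cap P\cap\Vor_S(b)$ are both nonempty for the same maximal cone $F$ with $a\neq b$, then the distance functions to $a$ and to $b$ restrict on these translated cones to the same linear form $\lineardist_F$; weak general position forces $\lineardist_F(a)\neq\lineardist_F(b)$, say $\lineardist_F(a)>\lineardist_F(b)$, and then for any $y\in(b+F)\cap P\cap\Vor_S(b)$ Observation~\ref{obs:dome} gives
$\dist(y,S)\leq\lineardist_F(y-a)<\lineardist_F(y-b)=\dist(y,b)$,
contradicting $y\in\Vor_S(b)$. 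Once this is in place the set of pairs is a genuine partial matching with at most $d(d+1)$ entries, which yields the constant-size claim as a corollary rather than as an unargued assumption.

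Your ``main obstacle'' paragraph is largely a detour of your own making: the paper does not attempt to locate $F_a$ by minimizing $\dist(\cdot,a)$ over $P$, so the difficulty you flag about the minimizer escaping $\Vor_S(a)$ never arises. It suffices, for each site $a$ and each of the $O(1)$ maximal cones $F$, to test emptiness of $(a+F)\cap P\cap\Vor_S(a)$ directly and record the (at most one, by condition~(2)) cone that passes. The star-convexity machinery is not needed for this lemma.
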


\begin{proof}
	Suppose that a valid labeling does not exist for some cell $P$. Either the set of pairs in \eqref{eq:matchings} matches two cones with the same site, or matches two sites with the same cone of $\Fan(\ball{d})$. The former cannot happen since $P$ is full-dimensional and the cones in $\Fan(\balld)$ only intersect in lower-dimensional polyhedral cones.

  Then there are sites $a,b\in S$ and a maximal cone $F \in\Fan(\ball{d})$ such that the sets $(a+F) \cap P\cap \Vor_S(s)$ and $(b+F)\cap P\cap\Vor_S(t)$ both are non-empty.

  With the notation of \eqref{eq:dome:lineardist} we have $\lineardist_{F_a}=\lineardist_{F_b}$; and we shortly write $\lineardist$.
  Since the sites are in weak general position, we may assume that $\lineardist(b)>\lineardist(a)$.
  Picking $y\in(b+F)\cap P\cap\Vor_S(b)$ yields
  \[
    \dist(y,b) \ \geq \ \lineardist(b) \ > \ \lineardist(a) \ \geq \ \dist(y,S) \enspace,
  \]
  where the last inequality follows from \eqref{eq:dome:lineardist}.
  The resulting inequality $\dist(y,b)>\dist(y,S)$ implies that $y\not\in\Vor_S(b)$, which is a contradiction.
  Hence a valid labeling does exist.

  To compute such labeling, we iterate through all the sites.
  For each site $a$, the candidate facet of $F_a\in\Fan(\balld)$ is known by definition of the polytrope partition.
  To check if $(a,F_a)$ is a labeling candidate, we need to determine if $(a+F_a)\cap P \cap \Vor_S(a)$ is empty or not.
  This amounts to solving a linear program that has constant size (as $d$ is a constant).
  It follows that the entire labeling can be computed in $O(n)$ time.
\end{proof}

We aim at a first algorithm for computing a tropical Voronoi diagram in arbitrary dimension.
This will employ the standard polytrope partition from Example~\ref{exmp:standard}.

\begin{lemma}\label{lem:number-of-cells}
  If $S$ is in weak general position and has size $n$ then the trivial polytrope partition has
  \[
    (d+1)^{d-1} n^d + O(n^{d-1})
  \]
  maximal cells, if we consider $d$ a fixed constant.
\end{lemma}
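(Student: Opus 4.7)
The standard polytrope partition $\tmap = S + A_d$ is the hyperplane arrangement in $\torus{d+1} \cong \R^d$ consisting of the hyperplanes $\{x : x_i - x_j = a_i - a_j\}$ for $a \in S$ and $1 \leq i < j \leq d+1$, and its maximal cells are precisely the full-dimensional regions of this arrangement. The plan is to count these by analyzing the intersection lattice and applying Zaslavsky's theorem, which expresses the number of regions as $\sum_{F \in L(\tmap)} |\mu(\hat{0}, F)|$, and then to extract the leading order in $n$.

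The arrangement $\tmap$ decomposes into $\binom{d+1}{2}$ parallel classes, one per edge of the complete graph $K_{d+1}$; the class indexed by $\{i,j\}$ has normal vector $e_i - e_j$. By weak general position, the $n$ hyperplanes in each class are pairwise distinct, so $|\tmap| = n\binom{d+1}{2}$. The crucial observation is that the normals $\{e_i - e_j\}_{i<j}$ realize the graphic matroid of $K_{d+1}$, so a subset of these normals is linearly independent exactly when the corresponding edges form a forest in $K_{d+1}$.

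Consequently, a codimension-$d$ flat of the intersection lattice arises from choosing a spanning tree $T$ of $K_{d+1}$ together with one hyperplane per edge of $T$ (a choice of one site per edge). By Cayley's formula, $K_{d+1}$ has exactly $(d+1)^{d-1}$ spanning trees, giving at most $(d+1)^{d-1} n^d$ codimension-$d$ flats. Flats of codimension $k < d$ are indexed by $k$-edge forests together with a site per edge; the number of $k$-edge forests in $K_{d+1}$ is a constant $c_k$ depending only on $d$, so there are at most $c_k n^k$ such flats, contributing $O(n^{d-1})$ in aggregate to the region count.

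The main obstacle is bounding the deviation from the generic count. For a configuration in sufficiently general position each codimension-$d$ flat is a distinct point with $|\mu(\hat{0},F)|=1$, and Zaslavsky's formula yields $(d+1)^{d-1} n^d + O(n^{d-1})$ directly. Under only weak general position, accidental incidences --- in which more than $d$ hyperplanes meet at a single point because of an extra linear relation among sites --- can both raise $|\mu|$ at a collision point and reduce the count of distinct flats; one must argue that these two effects cancel to within $O(n^{d-1})$. The cleanest route is a deletion-restriction induction on a single parallel class at a time: removing one hyperplane $H$ from $\tmap$ changes the region count by the number of regions of the restricted arrangement $\tmap|_H$, which is a lower-dimensional instance of the same problem and whose count is $O(n^{d-1})$ by the inductive hypothesis. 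Adding back the $n$ hyperplanes of one parallel class thus contributes $(d+1)^{d-1} n^{d-1} n + O(n^{d-1})$ net regions, and iterating across all $\binom{d+1}{2}$ classes assembles the stated leading term, with all error terms of order $n^{d-1}$ controlled uniformly in $S$ by a constant depending only on $d$.
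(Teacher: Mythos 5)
Your route, via Zaslavsky's theorem and the graphic matroid of $K_{d+1}$, is genuinely different from the paper's. The paper chooses a generic direction $v$, puts cells that are bounded in direction $v$ in bijection with vertices of the arrangement via linear programming, counts those vertices through the spanning-tree/Cayley correspondence, and handles unbounded cells by restricting to a far hyperplane. Both arguments ultimately invoke Cayley's formula, and both face the same underlying difficulty, which you correctly flag: weak general position alone does not pin down the intersection lattice of $S+A_d$.

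The gap lies in your proposed fix. The ``cancellation'' intuition is backwards: a concurrence of more than $d$ hyperplanes from distinct parallel classes strictly \emph{decreases} the region count (three concurrent lines in $\R^2$ give $6$ regions, not $7$), and there is no compensating increase from $|\mu|$; both effects push the count the same way. Moreover the drop is not automatically $O(n^{d-1})$. For $S=\{(0,i,2i) : 1\le i\le n\}\subset\torus{3}$, which is in weak general position, a direct count shows that $S+A_2$ has $\tfrac{5}{2}n^2 + O(n)$ regions rather than $3n^2 + O(n)$, because a constant fraction of the potential triple points actually coincide. So the asserted leading coefficient genuinely requires more than weak general position; only the $O(n^d)$ upper bound (which is all the downstream complexity analysis uses) survives under the stated hypothesis.

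The deletion--restriction induction also does not close as written. The restriction of $S+A_d$ to a hyperplane $\{x_i-x_j=c\}$ is \emph{not} of the form $S'+A_{d-1}$: on that hyperplane the parallel classes indexed by $\{i,k\}$ and $\{j,k\}$ become identified and merge into a single class with roughly $2n$ members, so the inductive hypothesis does not apply to $\tmap|_H$. And the final tally is numerically inconsistent: if each of the $\binom{d+1}{2}$ parallel classes contributed a leading term of $(d+1)^{d-1}n^d$, the grand total would overshoot by a factor of $\binom{d+1}{2}$. A smaller slip: even for fully generic $S$, not every codimension-$d$ flat has $|\mu|=1$; each site is itself a codimension-$d$ flat lying on $\binom{d+1}{2}$ hyperplanes (a translate of the braid arrangement, where $|\mu|=d!$). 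This contributes only $O(n)$ and is harmless to the leading term, but the claim as stated is false.
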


\begin{proof}
  Pick a generic direction $v\in \torus{d+1}$.
  The cells of the polytrope partition that are bounded in the direction of $v$ are in a one-to-one correspondence with the vertices of the arrangement, by associating each polytrope with the optimum of the linear program maximizing $v^Tx$.
  Since the number of vertices equals
  \[
    (d+1)^{d-1} n^d - n \left((d+1)^{d-1} -1\right),
  \]
  by Cayley's formula, it suffices to show that the number of unbounded cells is in $O(n^{d-1})$, as the bases of $A_d$ correspond to spanning trees in the complete graph with $d$ vertices.
  The unbounded cells intersect a hyperplane, $H$, normal to $v$ that is far enough in the $v$ direction.
  The cells intersecting $H$ are the same as the cells in the restricted hyperplane arrangement, which is a $(d-1)$-dimensional arrangement with $N={d+1 \choose 2}n$ hyperplanes.
  The number of such cells is known to be in $O\left(N^{d-1}\right)$, which agrees with $O\left(n^{d-1}\right)$ as $N$ depends linearly on $n$.
\end{proof}

\begin{remark}[Standard polytrope partition algorithm]
  \label{rem:standard}
  This directly yields a first algorithm for computing a tropical Voronoi diagram of $n$ sites in $\torus{d+1}$ in $O(n^{d+1})$ time, as follows:
  First, we sort $S$ along each of the ${(d+1)\choose 2}$ directions $e_i+e_j$, in $O(n\log n)$ time.
  As in the proof of Lemma~\ref{lem:number-of-cells} we pick a generic direction $v\in\torus{d+1}$.
  We can compute the vertices of the hyperplane arrangement $A_d+S$ in time $O(n^d)$ by enumerating all $d$-sets of independent directions, which can be derived from the the oriented spanning tree of $K_{d+1}$, in constant time.
  For each of the $d$ directions we choose an index $i\in [n]$.

  Next we perturb each such vertex $p$ by a small multiple of $-v$, and we collect the intersection of bands of contiguous parallel hyperplanes of $A_d+S$ that contain the perturbed point.
  This can be done in time $O(\log n)$ for each direction.
  In this way, we find those cells which are bounded in the direction of this particular $v$ in linear output-dependent time.
  We repeat the same procedure for a set of directions $v_1,\dots, v_{d+1}$ which positively span the entire space $\torus{d+1}$.
  Each polytropal cell will be bounded in at least one of these directions, and thus their enumeration is still in $O(n^d)$ for $d$ fixed.

  Then, for each polytrope $P$, we compute a corresponding labeling in time $O(n)$, by Lemma~\ref{lem:labeling}.
  Therefore, we can compute the standard polytrope partition, including labels, in time $O(n^{d+1})$ for fixed $d$.
  The tropical Voronoi diagram in each cell is an ordinary dual convex hull problem of constant size.
  This computation splits each polytrope in the partition into semi-polytropes.
  The convex hull problem can be solved in constant time, and hence this algorithm takes $O(n^{d+1})$ time, if $d$ is considered a fixed constant.

  We implemented this procedure in \polymake \cite{DMV:polymake}, version 3.6.
\end{remark}

\begin{question}
  In the plane $\torus{3}$, we believe that ideas similar to the \enquote{trapezoidal maps} used in point location, see \cite[\S6.1]{Four+Marks}, should yield polytrope partitions of linear size but we did not work out the details.
  More generally: Is there a polytrope partition of complexity better than $\Theta(n^d)$ in arbitrary dimension?
  One could hope for something in $O(n^{d/2})$, which is the worst-case complexity of Euclidean Voronoi diagrams.
\end{question}

\subsection{An $O(n^d\log n)$ randomized incremental algorithm in $\torus{d+1}$}
\label{sec:incremental}

We can improve the algorithm from Remark~\ref{rem:standard} by constructing a polytrope partition incrementally.
The idea is to update an existing polytrope partition by including a new point and to employ randomization to improve the efficiency.
Moreover, we will also produce a coarser polytrope partition than the standard one, but only by a constant in $d$.


A key ingredient is a new data structure that we call a \emph{polytrope tree}.
Throughout we assume that the set $S$ of $n$ sites forms a subset of $\torus{d+1}$ in general position.
We fix the polytrope partition $\tmap:=S+\Fan(\balld)$, which is coarser than the standard polytrope partition but only by a factor which is constant in $d$; see Example~\ref{exmp:standard}.

\begin{definition}
  A \emph{polytrope tree} for $S$ is a (rooted) tree $T$ such that
  \begin{enumerate}
  \item for each leaf $\ell$ there is a polytropal cell $P(\ell)$ of $\tmap$;
  \item for each interior node $i$ there is a site $a(i)\in S$ and a polytrope $P(i)$.
  \end{enumerate}
  These satisfy the following consistency conditions:
  \begin{itemize}
  \item for the root node $r$ of $T$ we have $P(r)=\torus{d+1}$, which may be seen as an unbounded polytrope;
  \item the map $\ell\mapsto P(\ell)$ is a bijection between the set of leaves of $T$ and the set of polytropes in $\tmap$;
  \item the map $i\mapsto a(i)$ is a surjection from the interior nodes onto the set $S$;
  \item if $i$ is an interior node with children $c_1,\dots,c_k$, then $P(c_1),\dots,P(c_k)$ form the maximal cells of $(a(i)+\Fan(\balld))\cap P(i)$.
  \end{itemize}
\end{definition}

It is easy to construct a polytrope tree for $S$, and its purpose is to speed up the computation of a valid labeling.
This will reduce the algorithmic complexity from $O(n^{d+1})$ to $O(n^d\log n)$.

For the incremental update to insert a new site $b\not\in S$ we maintain a stack $\Sigma$ of unvisited nodes in a given polytrope tree for $\Sigma$ and process it as follows:
\begin{itemize}
\item the stack $\Sigma$ is initialized with the root node $r$;
\item we remove the top node $q$ from the stack $\Sigma$ unless it is empty;
\item if $q$ is an interior node such that $P(q)$ intersects more than one maximal cone of $s+\Fan(\balld)$, then we push the children of $q$ onto the stack $\Sigma$;
\item if $q$ is a leaf such that $P(\ell)$ intersects more than one maximal cone of $p+\Fan(\balld)$, then we create the intersections of $P(\ell)$ with $s+\Fan(\balld)$ as new leaves, which now become children of $q$, and we set $a(q)\leftarrow b$.
\end{itemize}
Note that an interior node $q$ with $P(q)$ contained in a unique maximal cone of $a+\Fan(\balld)$ is kept unchanged, and its children will not be visited.
The following is the essential part of the complexity analysis.

\begin{proposition}
  \label{prop:ptree}
  Let $T$ be a polytrope tree created in the way explained above, where the $n$ sites in $S$ are processed in uniformly random order.
  Then the expected height of $T$ is of order $O(\log n)$, if $d$ is considered a fixed constant.
\end{proposition}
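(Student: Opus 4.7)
The plan is to apply the technique of \emph{backwards analysis} from the theory of randomized incremental construction, and then to upgrade the resulting bound on the expected depth of a single leaf to a bound on the expected height of $T$ via a concentration-and-union-bound argument.

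Fix a leaf $\ell$ of the final tree $T$ with associated polytrope $P(\ell)$. Write $\sigma$ for the uniformly random insertion order, $S_k = \{a_{\sigma(1)},\dots,a_{\sigma(k)}\}$ for the set of the first $k$ inserted sites, and $P_k$ for the cell of the arrangement $S_k+\Fan(\balld)$ containing $P(\ell)$. Then $\mathrm{depth}(\ell) = \sum_{k=1}^n X_k$, where $X_k$ is the indicator that the insertion of $a_{\sigma(k)}$ strictly refines the cell containing $P(\ell)$; equivalently, $X_k=1$ iff $a_{\sigma(k)}$ is \emph{relevant} for $P_k$ in the sense that some facet-cone of $a_{\sigma(k)}+\Fan(\balld)$ supports a facet of $P_k$. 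Conditioning on the unordered set $S_k$ and noting that $\sigma^{-1}(k)$ is uniform on $S_k$,
\[
\Pr[X_k=1\mid S_k] \ = \ \frac{r(P_k)}{k},
\]
where $r(P_k)$ is the number of relevant sites for $P_k$.

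The key combinatorial input is that $r(P_k) = O(d^2)$. Every facet of $P_k$ lies in a hyperplane translated from a facet hyperplane of some cone in $\Fan(\balld)$, so its outward normal lies in the fixed set of $d(d+1)$ normals $\{e_i-e_j:i\neq j\}$. Since a convex polyhedron has at most one facet per outward normal direction, $P_k$ has at most $d(d+1)$ facets, and distinct facets are contributed by distinct sites. Hence $\mathbb{E}[X_k]\leq d(d+1)/k$, and summing gives $\mathbb{E}[\mathrm{depth}(\ell)] \leq d(d+1)\,H_n = O(\log n)$ for fixed~$d$.

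The remaining task is to deduce $\mathbb{E}[\mathrm{height}(T)] = O(\log n)$, which does not follow formally from $\mathbb{E}[\mathrm{depth}(\ell)] = O(\log n)$ at each $\ell$. For this I would invoke concentration: the $X_k$'s, although dependent, are a function of the random permutation $\sigma$, and standard Chernoff/martingale bounds for randomized incremental construction (as in Mulmuley's or Seidel's treatments) yield $\Pr[\mathrm{depth}(\ell)\geq c\log n] \leq n^{-\Omega(c)}$. The number of leaves of $T$ is bounded by the number of cells of $S+\Fan(\balld)$, which is polynomial in $n$ of degree depending only on $d$ by the same arrangement-counting argument as in Lemma~\ref{lem:number-of-cells}. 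A union bound over leaves then gives $\Pr[\mathrm{height}(T)\geq c\log n] = n^{-\Omega(1)}$ for a sufficiently large constant $c$, and integrating the tail yields $\mathbb{E}[\mathrm{height}(T)] = O(\log n)$. The main technical obstacle is precisely this concentration step, where the interdependence of the $X_k$'s must be handled carefully—everything else is a routine backwards-analysis computation.
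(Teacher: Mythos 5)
Your backwards-analysis argument for a fixed leaf---conditioning on the unordered set $S_k$, observing that at most $d(d+1)$ sites can contribute a facet-defining hyperplane to the current cell, so that the per-step change probability is at most $d(d+1)/k$, and summing the resulting harmonic series---matches the paper's proof exactly. The paper packages this as a backward induction on $n$, with the last insertion contributing at most $d(d+1)/n$ to the expected depth.

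Your observation that $\max_\ell E[\mathrm{depth}(\ell)] = O(\log n)$ does not formally yield $E[\mathrm{height}(T)] = O(\log n)$ is correct, and it points to a real gap in the paper's own argument: the paper's proof establishes only the per-leaf expected-depth bound and never performs the concentration-plus-union-bound step you describe. The tail bound $\Pr[\mathrm{depth}(\ell) \geq c\log n] \leq n^{-\Omega(c)}$ is indeed the standard upgrade for randomized incremental constructions (it is proved, e.g., for trapezoidal maps in de Berg et al.), and a union bound over the polynomially many leaves then controls the height; you are also right that the dependence among the $X_k$'s makes this nontrivial and needs a decoupling or conditional-independence argument rather than a naive Chernoff bound.

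One thing worth knowing: the downstream application in Corollary~\ref{coro:ptree} only uses the per-leaf expectation, since the cost of one insertion is bounded by summing $E[\mathrm{depth}(\ell)]$ over the $O(n^{d-1})$ affected leaves via linearity of expectation, and no maximum over leaves is ever taken. So the proposition as stated claims more than the paper's proof delivers or than the algorithmic analysis needs; your proposal is the correct way to close that gap if one wants the height bound literally, but the paper's weaker per-leaf bound already supports Theorem~\ref{thm:tree}.
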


\begin{proof}
  Let $P$ be a polytrope in the polytrope partition $\tmap(S)$.
  For each ordering $\pi: [n] \to S$ of $S$ we have a polytrope tree $T(S,\pi)$ with $P$ as a leaf.
  By induction on $n$ we will show:
  \begin{equation}
  \label{eq:induction}
    E[h_{T(S,\pi)}(P)] \ \leq \ d(d+1) \sum_{i=1}^n \frac {1}{i} \quad \in \ O(\log n)\enspace,
  \end{equation}
  where the expectation $E[\cdot]$ is taken uniformly over all $n!$ orderings of $S$, and $h_{T(S,\pi)}(P)$ is the depth of the leaf of $P$ in $T(S,\pi)$.

  We proceed by backwards analysis.
  Let $S'\subset S$ be the subset of sites that lie in some facet-defining hyperplane of $P$. Since $P$ has at most $d(d+1)$ facets and (by general position) their corresponding hyperplanes contain each exactly one point of $S$, we have $|S'| \leq d(d+1)$. Thus, the probability that the height $h_T(P)$ increases in the last insertion is at most $d(d+1)/n$. Since the increase is by exactly one, we have
  \[
  E[h_{T(S,\pi)}(P)] \ \leq \ E[h_{T(S\backslash\pi(n),\pi_{[n-1]})}(P')] + \frac{d(d+1)}{n} \enspace,
  \]
  where $P'$ is the polytrope containing $P$ in the polytrope partition before the last insertion.
  By induction hypothesis
  \[
    E[h_{T(S\backslash\pi(n),\pi_{[n-1]})}(P')] \ \leq \ d(d+1) \sum_{i=1}^{n-1} \frac {1}{i} \enspace.
  \]
  The last two formulas give Eq.~\eqref{eq:induction}.
\end{proof}

\begin{corollary}
  \label{coro:ptree}
  The above method constructs a polytrope tree for the polytrope partition $S+\Fan(\balld)$ in expected time $O(n^d\log n)$ and space $O(n^d)$, for $d$ constant.
\end{corollary}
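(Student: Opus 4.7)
The plan is to bound the space and the expected time separately, leveraging Lemma~\ref{lem:number-of-cells} for the size of the final partition and Proposition~\ref{prop:ptree} for the depth of the tree under a uniformly random insertion order.

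For the space bound, observe first that $\Fan(\balld) \leq \Fan(A_d)$ (see Remark~\ref{rem:fans}), so the partition $S+\Fan(\balld)$ is coarser than $S+A_d$ and therefore has $O(n^d)$ maximal cells by Lemma~\ref{lem:number-of-cells}, since $d$ is constant. These cells are exactly the leaves of the polytrope tree. Every interior node has at most $d(d+1)$ children (the number of maximal cones of $\Fan(\balld)$, a constant when $d$ is fixed), so the total number of nodes in the tree is within a constant factor of the number of leaves, giving overall space $O(n^d)$.

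For the time bound, I would analyze the cost of the $i$-th insertion by the number of nodes visited during the stack descent. The visited nodes form a subtree of the current polytrope tree rooted at $r$, and since the branching factor is bounded by $d(d+1)$, the total number of visited nodes is proportional to the number of leaves of the visited subtree. Each such leaf is either a cell of the current partition that gets subdivided at this insertion, or a "dead-end" where $P(q)$ happens to be contained in a single maximal cone of $b+\Fan(\balld)$; in either case it lies on a root-to-leaf path of length bounded by the current tree depth. Hence the cost of the $i$-th insertion is $O(\Delta_i \cdot h_i)$, where $\Delta_i$ counts cells subdivided at step $i$ and $h_i$ is the depth of the tree after step $i$.

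Summing over $i$, the total number of subdivision events equals the final number of leaves minus one, which is $O(n^d)$ by the first paragraph. By Proposition~\ref{prop:ptree}, for every leaf the expected depth in the final tree (under a random insertion order) is $O(\log n)$; a standard backwards-analysis argument shows the same bound holds in expectation at every intermediate stage, since the backwards bound $d(d+1)/n$ on the probability of increasing the depth of a specific cell only uses that at most $d(d+1)$ sites lie on the hyperplanes of any fixed polytrope. Combining these yields expected total time $O(n^d \log n)$.

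The main obstacle is the clean accounting of the "dead-end" visits noted above. One must verify that the visited subtree really is charged to subdivided leaves up to a constant factor, which is where the bounded branching of $\Fan(\balld)$ is essential; without it the wasted descents could dominate. I expect this step to require a careful potential or charging argument, but given the constant branching and the depth bound from Proposition~\ref{prop:ptree}, the standard randomized-incremental-construction template (as used, e.g., in \cite{Four+Marks}) should go through without further difficulty.
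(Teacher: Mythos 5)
Your accounting of the per-insertion cost has a gap in the final combination step. You bound the cost of the $i$-th insertion by $O(\Delta_i\cdot h_i)$ and then try to conclude $E[\sum_i \Delta_i h_i]=O(n^d\log n)$ from the two facts (i) $\sum_i \Delta_i=O(n^d)$ (deterministically) and (ii) the expected depth of each individual leaf is $O(\log n)$. These two facts do not multiply: $\Delta_i$ and $h_i$ are correlated random variables, and (ii) is a per-leaf bound, not a bound on $\max_i h_i$. To push the $O(\log n)$ factor through the sum you would need either a bound on the expected \emph{maximum} height of the tree (which the proof of Proposition~\ref{prop:ptree} does not actually establish; it only bounds the expected depth of each fixed leaf), or a \emph{per-insertion} bound on $\Delta_i$. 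The paper uses the latter, which you are missing: when inserting a new site $a$, a leaf $\ell$ needs updating only if $P(\ell)$ meets one of the $d(d+1)$ hyperplanes of $a+A_d$, and each such hyperplane meets only $O(n^{d-1})$ cells of the current partition. Hence $\Delta_i=O(n^{d-1})$ for every $i$, and the expected cost of one insertion is $O(n^{d-1})\cdot O(\log n)$ by linearity of expectation over the cut leaves (each of expected depth $O(\log n)$), giving $O(n^d\log n)$ in total. Your aggregate bound $\sum_i\Delta_i=O(n^d)$ is true but strictly weaker for this purpose: it is an average over insertions, and a single expensive insertion with many deep cut leaves is not excluded by it alone.

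Your concern about ``dead-end'' visits (nodes pushed but found not to be cut) is legitimate but is handled exactly as you suspect: each visited node is either cut or a child of a cut node, and since branching is bounded by $d(d+1)$, this only costs a constant factor over the ancestors of the cut leaves, which is the quantity being bounded. The space bound and the identification of the relevant tools (Lemma~\ref{lem:number-of-cells}, Proposition~\ref{prop:ptree}, bounded branching) are all correct; the only substantive issue is replacing your aggregate count by the per-insertion $O(n^{d-1})$ bound.
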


\begin{proof}
  The algorithm that inserts a new site $a$ into the tree only visits nodes that are above some leaf requiring an update.
  For each such leaf $\ell$ the polytrope $P(\ell)$ intersects one of the $d(d+1)$ hyperplanes in $a+A_d$.
  This implies that there are $O(n^{d-1})$ of them.
  Since the expected depth of every leaf is $O(\log n)$ it requires expected time $O(n^{d-1} \log n)$ for inserting $a$.
  Hence the total complexity for $n$ sites amounts to $O(n^d \log n)$.
\end{proof}

In order to compute the tropical Voronoi diagram, we also need to compute the labeling of this polytrope partition.
The naive way is to compute the labeling for each leaf as we did in Remark~\ref{rem:standard}.

A slight improvement is to compute the labeling during the depth-first-search (DFS) exploration of the tree at each insertion of a new site.
But in this way, even if an interior  node is completely contained in only one cone of the fan $a+\Fan(\ball d)$, we need to descend to its subtree in order to update the labels.
This would slow the algorithm down to $\Theta(n^{d+1})$ because each insertion will have to iterate through all the leaves.

A better way is to compute the labeling lazily.
To this end we equip each interior node $i$ with a partial labeling $\mathcal{L}_\tmap(i)$.
With each new insertion, we proceed as we just explained, but we do not cascade down the label updates.
Only once all sites in $S$ have been inserted we cascade the partial labels, updating them in DFS order.
This takes $O(n^d\log n)$ time to compute the polytropes and the lazy labelings, plus $O(n^d)$ time to cascade the lazy labelings down in the tree, for a total time complexity of $O(n^d\log n)$ time.
This gives our final result.

\begin{theorem}\label{thm:tree}
  There is a randomized incremental algorithm for computing tropical Voronoi diagrams of $n$ sites in $\torus{d+1}$ in general position with expected time complexity $O(n^d\log n)$ and space complexity $O(n^d)$, for $d$ constant.
\end{theorem}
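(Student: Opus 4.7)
The plan is to combine the randomized incremental construction of a polytrope tree with lazy labeling and a final local Voronoi computation inside each cell. Concretely, I would take $\tmap := S + \Fan(\balld)$ as the target polytrope partition and construct it together with a polytrope tree $T$ as in Section~\ref{sec:incremental}, by inserting the $n$ sites one at a time in uniformly random order. The first step is to invoke Corollary~\ref{coro:ptree}: the resulting polytrope tree is built in expected time $O(n^d \log n)$ and occupies $O(n^d)$ space, so asymptotically I pay only a logarithmic factor over the final output size.

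Next I would argue correctness of the incremental update. Each insertion of a new site $b$ refines exactly those leaves $P(\ell)$ of the current tree whose polytrope meets more than one maximal cone of $b + \Fan(\balld)$; equivalently, those crossed by one of the $d(d{+}1)$ hyperplanes of $b + A_d$. The DFS through $T$ restricts descent to the subtree above such leaves, which by the analysis in Corollary~\ref{coro:ptree} has expected size $O(n^{d-1}\log n)$ per insertion, giving $O(n^d\log n)$ total. The key quantitative input is Proposition~\ref{prop:ptree}, whose backwards-analysis bound on the expected height of $T$ is what prevents the insertion from cascading too deep.

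For the labeling I would defer work: on each insertion I only update partial labels $\cL_\tmap(i)$ attached to the nodes actually visited, without propagating them to descendants. After all $n$ sites have been inserted, a single DFS traversal pushes partial labels from ancestors down to leaves; by Lemma~\ref{lem:labeling} (applied locally in each leaf using the cascaded candidates) this produces a valid labeling of $\tmap$. Since every internal node stores at most $O(1)$ data (with $d$ fixed) and there are $O(n^d)$ leaves, the cascade adds only $O(n^d)$ time and space, preserving the stated bounds. The main subtlety here is to check that lazy labels, even though they were not propagated at insertion time, carry enough information so that each leaf can correctly determine its $(a,F_a)$ pairs during the final DFS: this follows because for each site $a\in S$ exactly one of its cones $a+F$ can meet $\Vor_S(a)\cap P(\ell)$ in weak general position, and the ancestor at which $a$ was inserted records the correct cone for the whole subtree.

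Finally, inside each leaf polytrope $P(\ell)$ with valid labeling I would invoke Observation~\ref{obs:dome}: the distance to $S$ restricts to $\min_{a\in S}\lineardist_{F_a}(x-a)$ on $P(\ell)$, so the restriction of $\Vor(S)$ to $P(\ell)$ is obtained from an ordinary dual convex hull computation among the relevant linear functions. Because a valid labeling matches each facet of $\balld$ to at most one site, the number of functions involved is $O(1)$ (with $d$ fixed), so each local Voronoi computation runs in constant time and yields a subdivision of $P(\ell)$ into semi-polytropes as in Theorem~\ref{thm:star}. Summing over $O(n^d)$ leaves gives the total $O(n^d\log n)$ expected time and $O(n^d)$ space, completing the proof. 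The main obstacle I expect is being careful that lazy labeling remains correct under general position; the rest is bookkeeping on top of Proposition~\ref{prop:ptree} and Corollary~\ref{coro:ptree}.
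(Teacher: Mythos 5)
Your proposal follows the paper's proof essentially verbatim: build the polytrope tree via Corollary~\ref{coro:ptree}, avoid the naive $O(n^{d+1})$ labeling cost through lazy partial labels at interior nodes cascaded down in a single final DFS (Lemma~\ref{lem:labeling}), and finish with the constant-size dual convex hull per leaf via Observation~\ref{obs:dome}. The one extra remark you add — that the ancestor where a site was inserted determines the correct cone for every leaf in its subtree, which is why the cascade is well-defined — is a sensible clarification of a point the paper leaves implicit, but it does not change the structure of the argument.
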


\bibliographystyle{amsplain}
\bibliography{TropicalVoronoiDiagrams}

\end{document}